 \numberwithin{equation}{section}
\theoremstyle{plain}
\newtheorem{thm}{Theorem}[section]
\newtheorem{cor}[thm]{Corollary}
\newtheorem{lem}[thm]{Lemma}
\newtheorem{prop}[thm]{Proposition}
\theoremstyle{definition}
\newtheorem{ex}[thm]{Example}
\theoremstyle{remark}
\newcommand{\R}{\mathbb{R}}
\newcommand{\bp}{\begin{proof}[\ensuremath{\mathbf{Proof}}]}
\newcommand{\bs}{\begin{proof}[\ensuremath{\mathbf{Solution}}]}
\newcommand{\ep}{\end{proof}}
\newcommand{\be}{\begin{equation}}
\newcommand{\ee}{\end{equation}}
\begin{document}

\title{The perimeter and volume of a Reuleaux polyhedron}

\author{Ryan Hynd\footnote{Department of Mathematics, University of Pennsylvania. Supported in part by an American Mathematics Society Claytor--Gilmer Fellowship.}}

\maketitle

\begin{abstract}
A ball polyhedron is the intersection of a finite number of closed balls in $\R^3$ with the same radius. In this note, we study ball polyhedra in which the set of centers defining the balls have the maximum possible number of diametric pairs. We explain how to compute the perimeter and volume of these shapes by employing the Gauss--Bonnet theorem and another integral formula.  In addition, we show how to adapt this method to approximate the volume of Meissner polyhedra, which are constant width bodies constructed from ball polyhedra. 
\end{abstract}

\tableofcontents

\section{Introduction}
Let us first consider the vertices $\{x_1,x_2,x_3,x_4\}$ of a regular tetrahedron in $\R^3$ of side length one. That is, $\{x_1,x_2,x_3,x_4\}$ are four points in $\R^3$ which satisfy
$$
|x_i-x_j|=1\; \text{for all $i\neq j$}. 
$$
The corresponding {\it Reuleaux tetrahedron} $R$ is given by
$$
R=B(x_1)\cap B(x_2)\cap B(x_3)\cap B(x_4).
$$
Here $B(x)\subset \R^3$ denotes the closed ball of radius one centered at $x$.  A basic problem is to determine the perimeter and volume of $R$. 
\begin{figure}[h]
\centering
 \includegraphics[width=.45\textwidth]{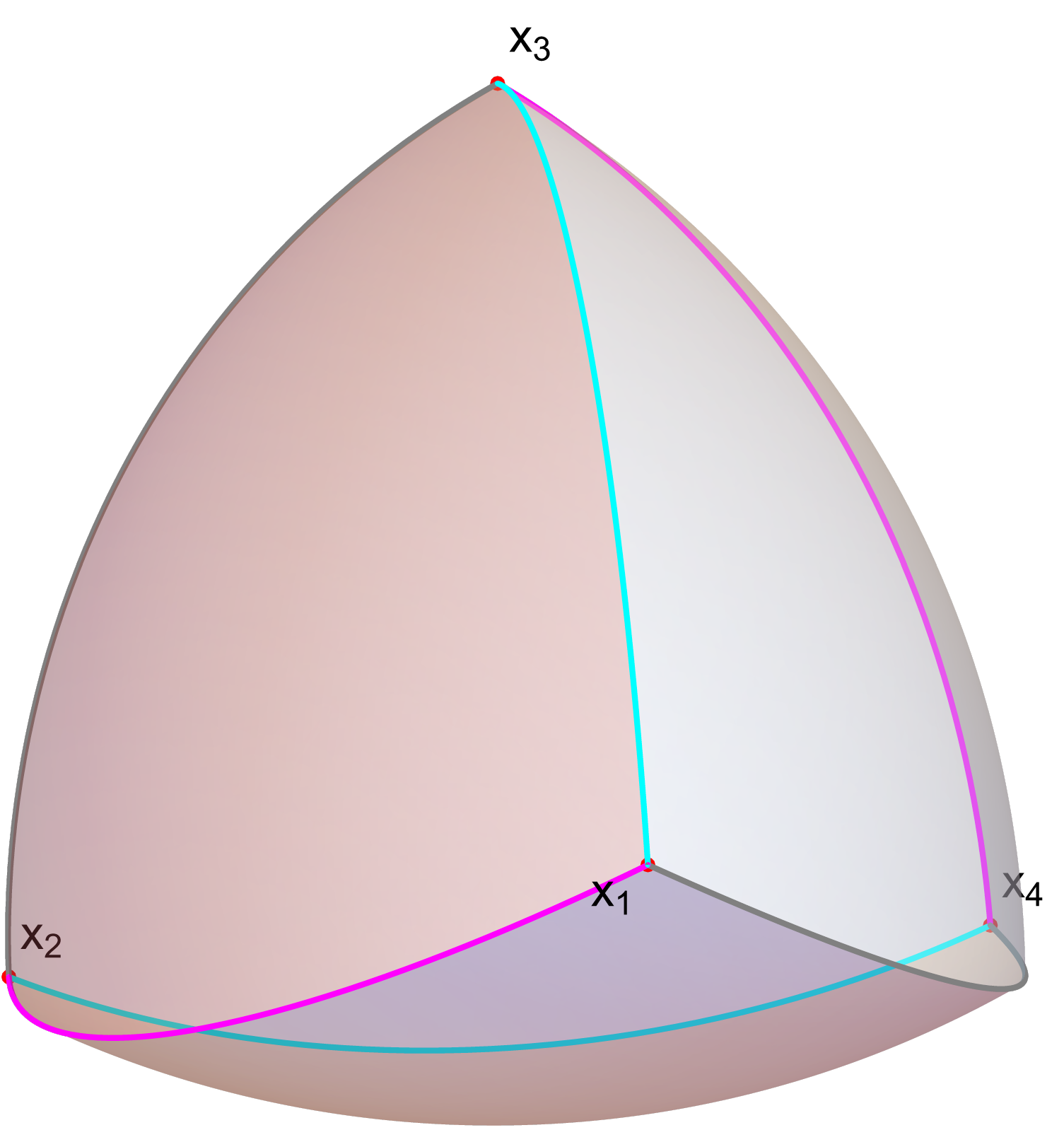}
 \hspace{.2in}
  \includegraphics[width=.43\textwidth]{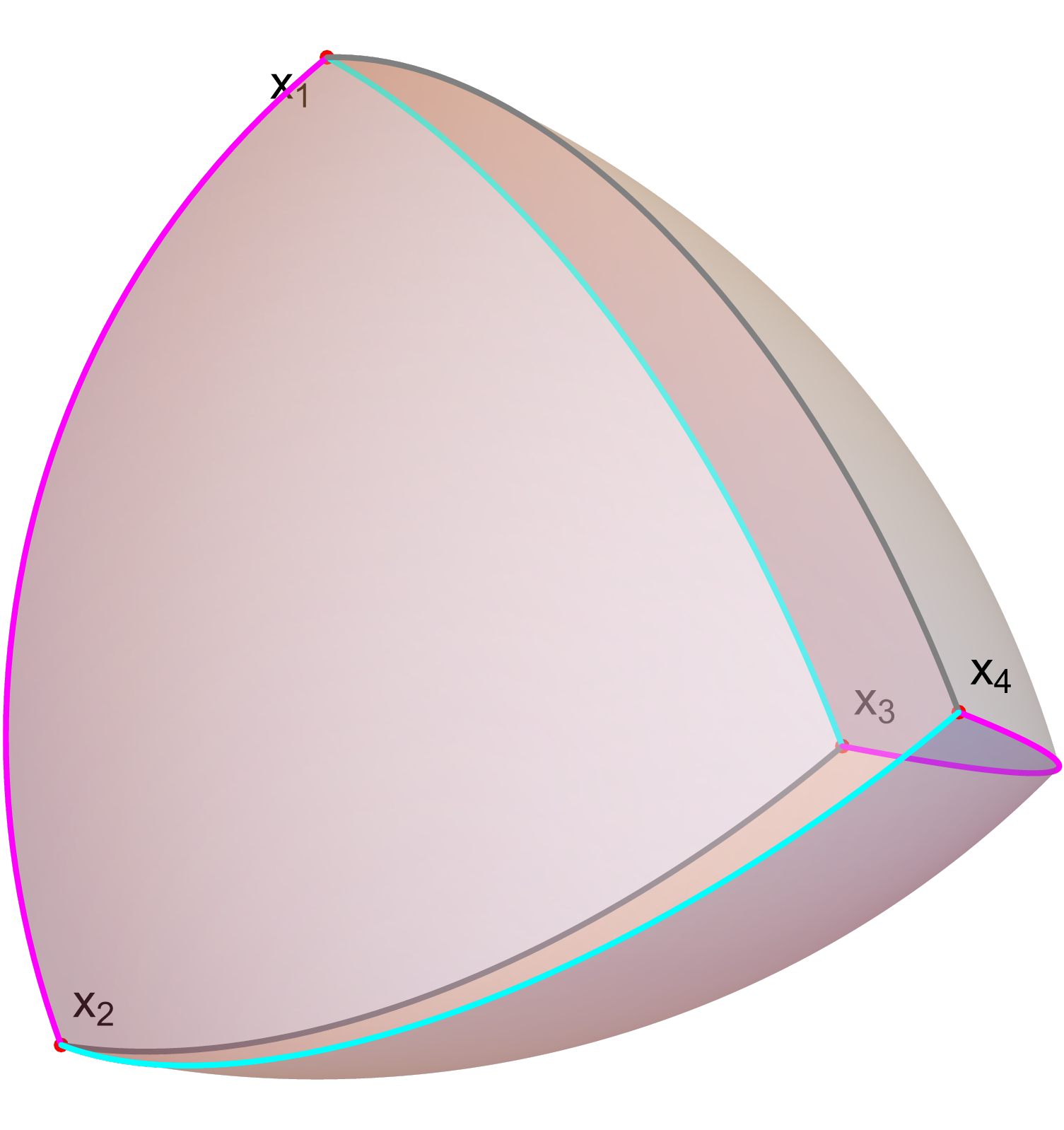}
 \caption{Here are two translucent views of a Reuleaux tetrahedron $R$ defined by the centers $\{x_1,x_2,x_3,x_4\}$.   Observe that the centers $\{x_1,x_2, x_3,x_4\}$ are also the vertices of $R$.  Moreover, $R$ has four vertices, six edges, and four faces just like a regular tetrahedron in $\R^3$.  }\label{ReuleauxTetra}
\end{figure}

\par As explained by Harbourne \cite{harbourne}, we can use the Gauss--Bonnet theorem to compute the surface area of each face of $R$ as $2\pi-(9/2)\cos^{-1}(1/3)$.  Therefore, 
the perimeter of $R$ is 
$$
P(R)=4\left(2\pi-\frac{9}{2}\cos^{-1}(1/3)\right).
$$
Harbourne also explained how to compute the volume
$$
V(R)=\frac{8\pi}{3}+\frac{\sqrt{2}}{4}-\frac{27}{4}\cos^{-1}(1/3)
$$
by exploiting the symmetry of $R$.  In this note, we extend these ideas to a family of shapes which include Reuleaux tetrahedra. 

\subsection{Reuleaux polyhedra} Suppose that $X\subset \R^3$ is finite with at least four points and that $X$ has diameter one.  We'll say that a pair $\{x,y\}\subset X$ is {\it diametric} if $|x-y|=1$. We'll also say that $X$ is {\it extremal} provided that $X$ has the maximal possible number of diametric pairs.  Gr\"unbaum \cite{MR87115}, Heppes \cite{MR87116}, and Straszewicz \cite{MR0087117} independently verified the conjecture of V\'azsonyi which asserted that 
$$
\text{if $X$ has $m$ elements, then $X$ has $\le 2m-2$ diametric pairs.}
$$
Therefore, $X$ is extremal if and only if it has $2m-2$ diametric pairs.  

\par Gr\"unbaum, Heppes, and Straszewicz each employed the convex body 
$$
B(X):=\bigcap_{x\in X}B(x)
$$
in their respective works \cite{MR87115,MR87116,MR0087117}. In general, an intersection of finitely many congruent balls is known as a ball polyhedron. When $X$ is extremal we say that $B(X)$ is a {\it Reuleaux polyhedron}. Our goal is to describe an efficient way to compute the perimeter and volume of Reuleaux polyhedra.

\par  Let us suppose for definiteness that 
$$
\text{$X=\{x_1,\dots,x_m\}\subset \R^3$ is extremal.}
$$
In order to state our formulae for the perimeter and volume of $B(X)$, we will need to recall some key properties of the boundary of $B(X)$. The following results regarding the boundary structure of $B(X)$ have been verified in the seminal work by Kupitz, Perles, and Martini \cite{MR2593321} on extremal subsets of $\R^3$.  We also recommend 
related work by Bezdek,  L\'angi, Nasz\'odi, and Papez  \cite{MR2343304} which discusses various important properties of ball polyhedra.
\\\\
\noindent {\bf Faces}. As $B(X)=B(x_1)\cap\dots\cap B(x_m)$, 
\be\label{BoundaryUnionFormula}
\partial B(X)=\bigcup^m_{j=1}\partial B(x_j)\cap B(X).
\ee
Here $\partial B(x_j)\cap B(X)$ is the {\it face} of $B(X)$ which is opposite $x_j$.  Furthermore, each face of $B(X)$ is distinct, so $B(X)$ has a face opposite to each center in $X$. It is also known that $\partial B(x_j)\cap B(X)$ is a geodesically convex subset of $ \partial B(x_j)$.
\\\\
\noindent {\bf Vertices}.   A {\it principal vertex} of $B(X)$ is a point which belongs to three or more faces of $B(X)$, and a {\it dangling vertex}  is a member of $X$ which belongs to exactly two faces of $B(X)$.  The set $X$ being extremal is equivalent to 
$$
X=\text{vert}(B(X)),
$$
where $\text{vert}(B(X))$ denotes the collection of all vertices (principal and dangling) of $B(X)$.  Therefore, each center defining $B(X)$ is a vertex of $B(X)$ and conversely. 
\\\\
\noindent {\bf Edges}. The {\it edges} of $B(X)$ are the connected components of 
$$
(\partial B(x_j)\cap \partial B(x_k)\cap B(X))\setminus X\quad (j\neq k).
$$
\begin{figure}[h]
\centering
 \includegraphics[width=.45\textwidth]{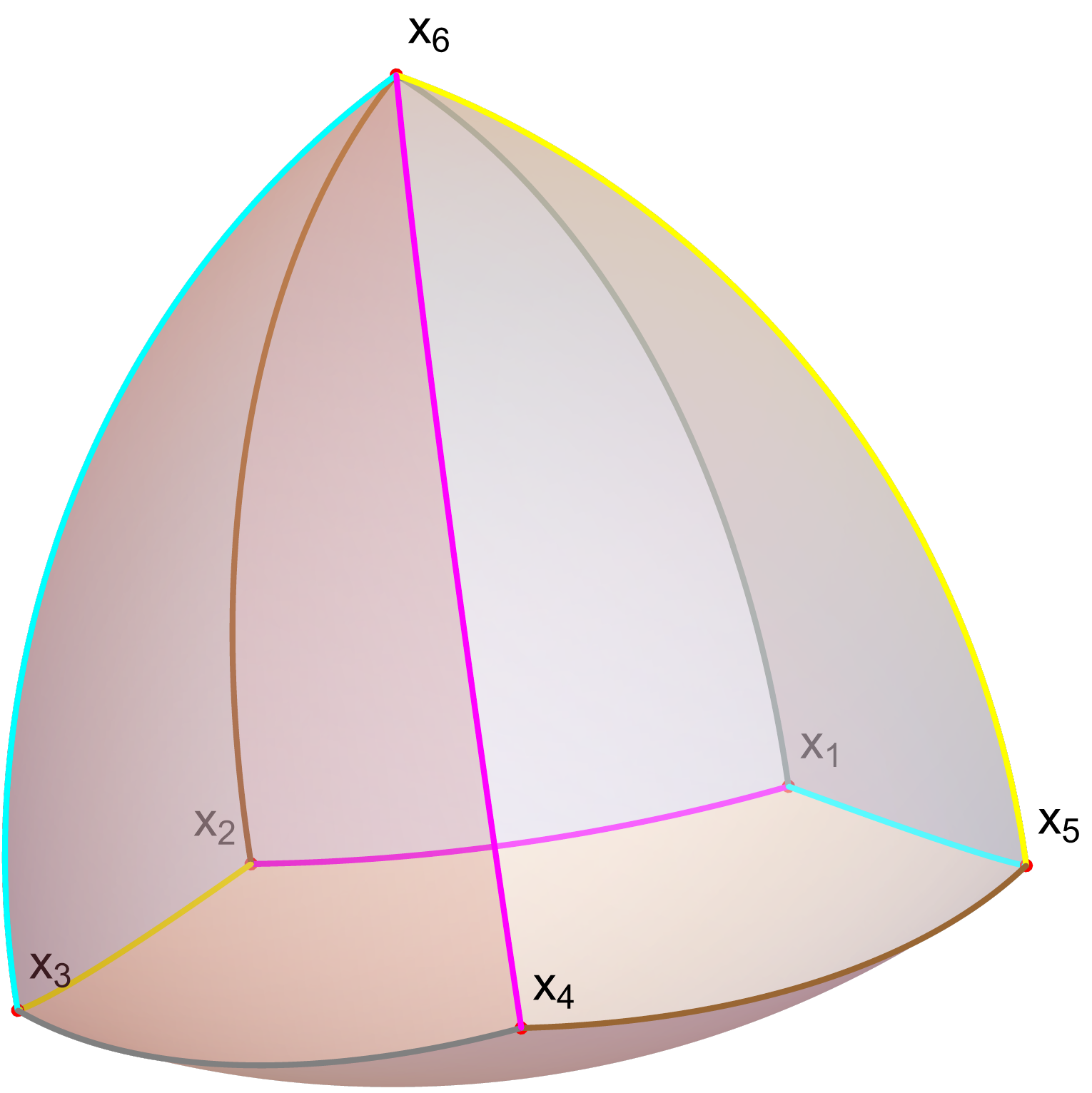}
 \hspace{.2in}
  \includegraphics[width=.45\textwidth]{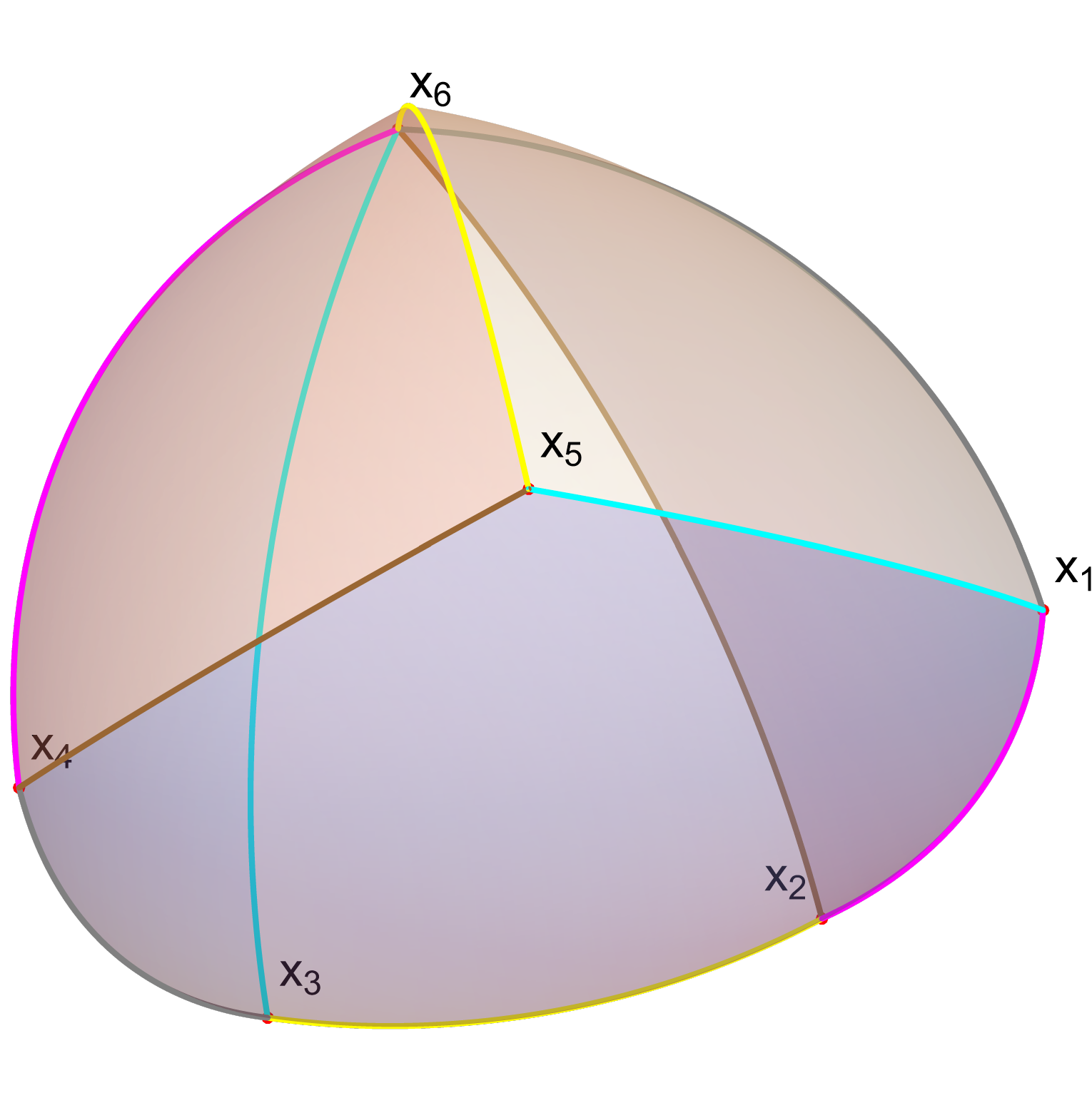}
 \caption{A Reuleaux polyhedron $B(\{x_1,\dots,x_6\})$ displayed with its dual edge pairs.  Note in particular that we have drawn both edges in a dual edge pair with the same color. }\label{ReulauxTetraEdge}
\end{figure}
In particular, each edge is a segment of a circular arc since $\partial B(x_j)\cap \partial B(x_k)$ is the circle defined by the equations for $x\in \R^3 $
\be\label{EdgeCircle}
\displaystyle \left|x-\frac{x_j+x_k}{2}\right|=\sqrt{1-\left|\frac{x_j-x_k}{2}\right|^2}\;\text{and}\; \displaystyle \left(x-\frac{x_j+x_k}{2}\right)\cdot \left(x_j-x_k\right)=0.
\ee
It turns out that these edges are naturally grouped in pairs.  That is, for each edge 
$$
e\subset \partial B(x_j)\cap \partial B(x_k)\cap B(X)
$$
with endpoints $x_{j'}$ and $x_{k'}$, there is a unique edge 
$$
e'\subset \partial B(x_{j'})\cap \partial B(x_{k'})\cap B(X)
$$
with endpoints $x_j$ and $x_k$. In this case, $e$ and $e'$ are {\it dual edges}.
\\\\
\noindent {\bf Positively oriented opposite vertices}. We will need to introduce some notation in order to state our perimeter and volume formulae below concisely. To this end, we fix $j\in \{1,\dots, m\}$ and choose 
\be\label{aeyejay}
\begin{array}{p{0.8\textwidth}}
vertices $a_{1,j},\dots, a_{N_j,j}$ of $B(X)$ which belong to the face opposite $x_j$ such 
that $a_{i,j}$ and $a_{i+1,j}$ are joined by an edge $e_{i,j}$ of $B(X)$ for $i=1,\dots, N_j$.
\end{array}
\ee
Here $ a_{N_j+1,j}=a_{1,j}$, so $e_{N_j,j}$ joins $a_{N_j,j}$ to $a_{1,j}$.  We will say that $\{a_{1,j},\dots, a_{N_j,j}\}$ is {\it positively oriented} in $\partial B(x_j)$ if the interior of the face opposite $x_j$ is on the left as one successively traverses the edges $e_{1,j},e_{2,j},\dots, e_{N_j,j}$ from the exterior of $ B(x_j)$.   In addition, we note the edge dual to $e_{ij}$ has endpoints $x_j$ and $b_{i,j}\in X$ for $i=1,\dots, N_j.$ Therefore, 
\be\label{beyejay}
\text{ $e_{ij}\subset \partial B(x_j)\cap \partial B(b_{i,j})\cap B(X)$ for $i=1,\dots, N_j.$}
\ee
Relative to $\{a_{1,j},\dots, a_{N_j,j}\}$, we will say $\{b_{1,j},\dots, b_{N_j,j}\}$ are the {\it corresponding adjacent vertices} of $B(X)$.  See Figure \ref{EdgeDiagram} for a related diagram. 
\begin{figure}[h]
\centering
 \includegraphics[width=.6\textwidth]{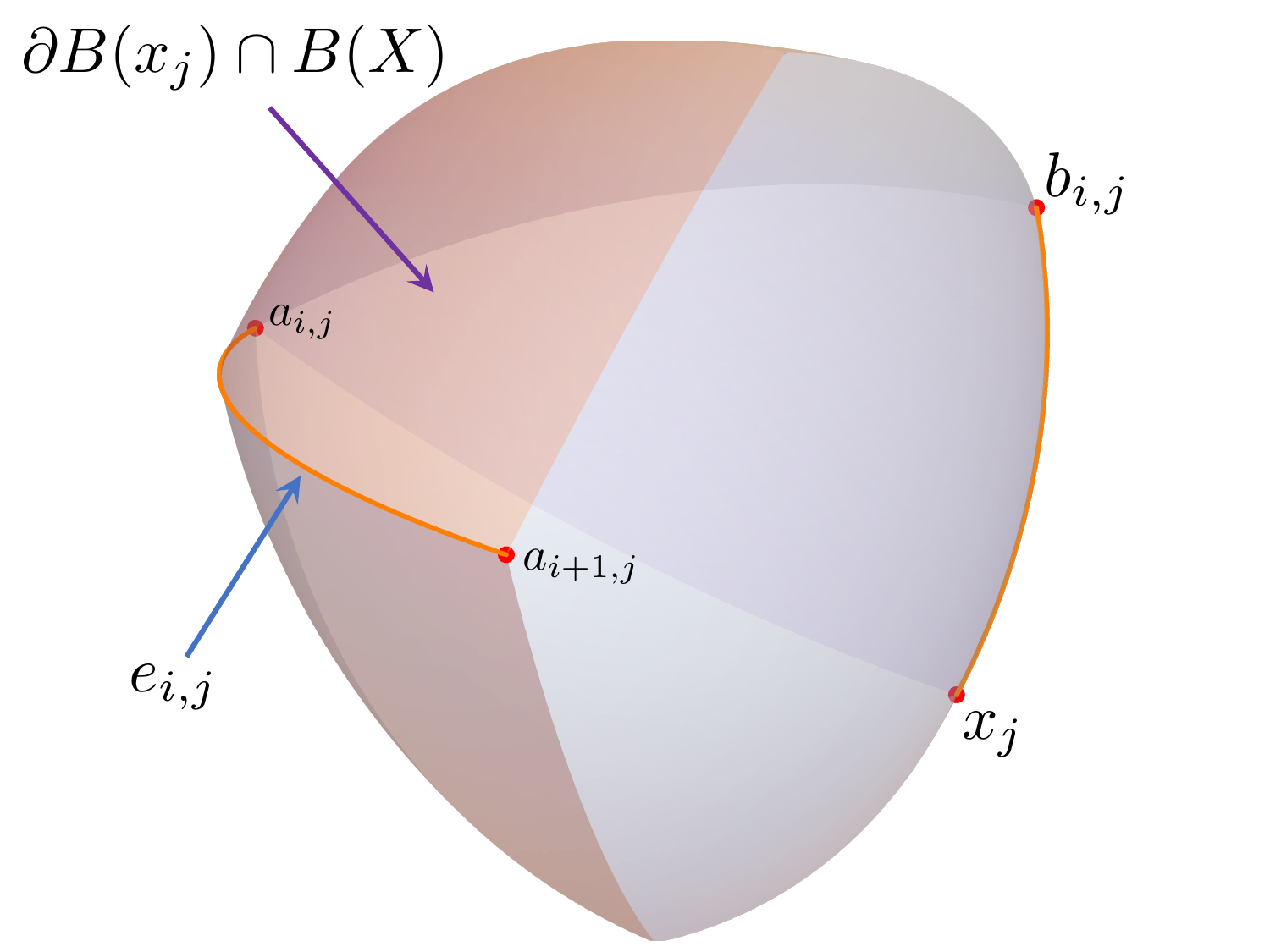}
 \caption{This diagram was made to give some intuition about the edge $e_{i,j}$ in the statements \eqref{aeyejay} and \eqref{beyejay}. Note that $e_{i,j}$ belongs to the face opposite $x_j\in X$ and joins two vertices $a_{i,j}$ and $a_{i+1,j}$. Also observe that the edge dual to $e_{i,j}$ joins $b_{i,j}$ and $x_j$.  } \label{EdgeDiagram}
\end{figure}
\\
\par Let us see how this works when $X=\{x_1,x_2,x_3,x_4\}$ is the set of vertices of a regular tetrahedron in  $\R^3$ of side length one. Note that there are six diametric pairs  $\{x_i,x_j\}$ with $1\le i<j\le 4$. 
As $6=2\cdot 4-2$, $X$ is extremal. Moreover, $R=B(\{x_1,x_2,x_3,x_4\})$ has four vertices $\{x_1,x_2,x_3,x_4\}$, six edges 
$\partial B(x_i)\cap \partial B(x_j)\cap R$,  four faces $\partial B(x_i)\cap R$ $(i,j=1,\dots,4)$, and 3 dual edge pairs 
$$
\partial B(x_i)\cap \partial B(x_j)\cap R\;\text{and}\;\partial B(x_k)\cap \partial B(x_\ell)\cap R,
$$
where $i,j,k,\ell\in \{1,2,3,4\}$ are distinct. In reference to $R$ in Figure \ref{ReuleauxTetra}, 
we may choose a collection of positively oriented vertices opposite to $x_1$ with corresponding adjacent vertices  
$$
\begin{cases}
a_{1,1}=x_2\\
a_{2,1}=x_3\\
a_{3,1}=x_4
\end{cases}
\quad 
\begin{cases}
b_{1,1}=x_4\\
b_{2,1}=x_2\\
b_{3,1}=x_3
\end{cases}
$$
and likewise for $x_2, x_3, x_4$.

\subsection{Main results}
As mentioned, we will show how to extend the method Harbourne used to compute the perimeter of $R$ to all Reuleaux polyhedra $B(X)$. This method is based on an application of the Gauss--Bonnet theorem.  Let us recall that if $F\subset\partial B(0)$ and $\partial F$ is a simple, closed, positively oriented curve which is the union of smooth curves $\gamma_1,\dots, \gamma_N$, then
$$
\int_{F}K d\sigma+\sum^N_{j=1}\left(\int_{\gamma_j}k_gds+\theta_j\right)=2\pi.
$$
Here $K$ is the Gaussian curvature of $\partial B(0)$ in the region $F$, $k_g$ in the integral $\int_{\gamma_j}k_gds$ is the geodesic curvature of $\gamma_j$, and $\theta_j$ is the angle between the slopes of $\gamma_j$ and $\gamma_{j+1}$ where these curves meet.  We have used $\sigma$ to denote the corresponding surface measure on $\partial B(0)$. 

\par The key observation made by Harbourne is that $K\equiv 1$ on $\partial B(0)$, so 
$$
\sigma(F)=2\pi-\sum^N_{j=1}\left(\int_{\gamma_j}k_gds+\theta_j\right).
$$
Our task is then to compute the integrals of the geodesic curvature and the external angles to get an explicit formula for the surface area of each face of a Reuleaux polyhedron $B(X)$. We can do this using the identity $X=\text{vert}(B(X))$ and edge duality. In particular, these properties allow us to describe the boundary curves of each face of $B(X)$ solely in terms of $X$. This is not possible for every ball polyhedra; one would typically have to find outstanding principal vertices before proceeding to compute the surface area of a given face. See Figure \ref{BallPoly} for an example.  
\begin{figure}[h]
\centering
 \includegraphics[width=.45\textwidth]{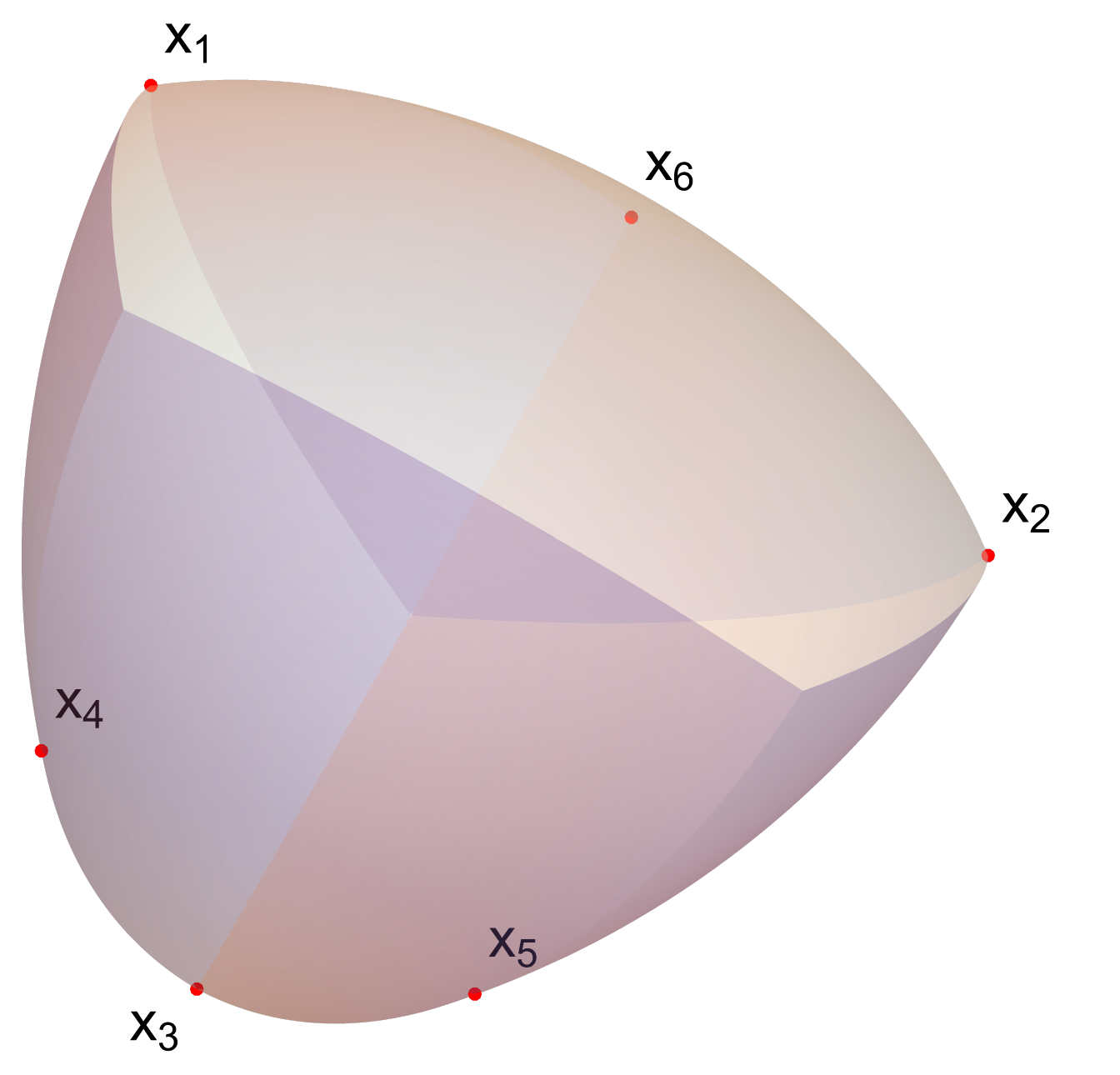}
 \hspace{.2in}
  \includegraphics[width=.45\textwidth]{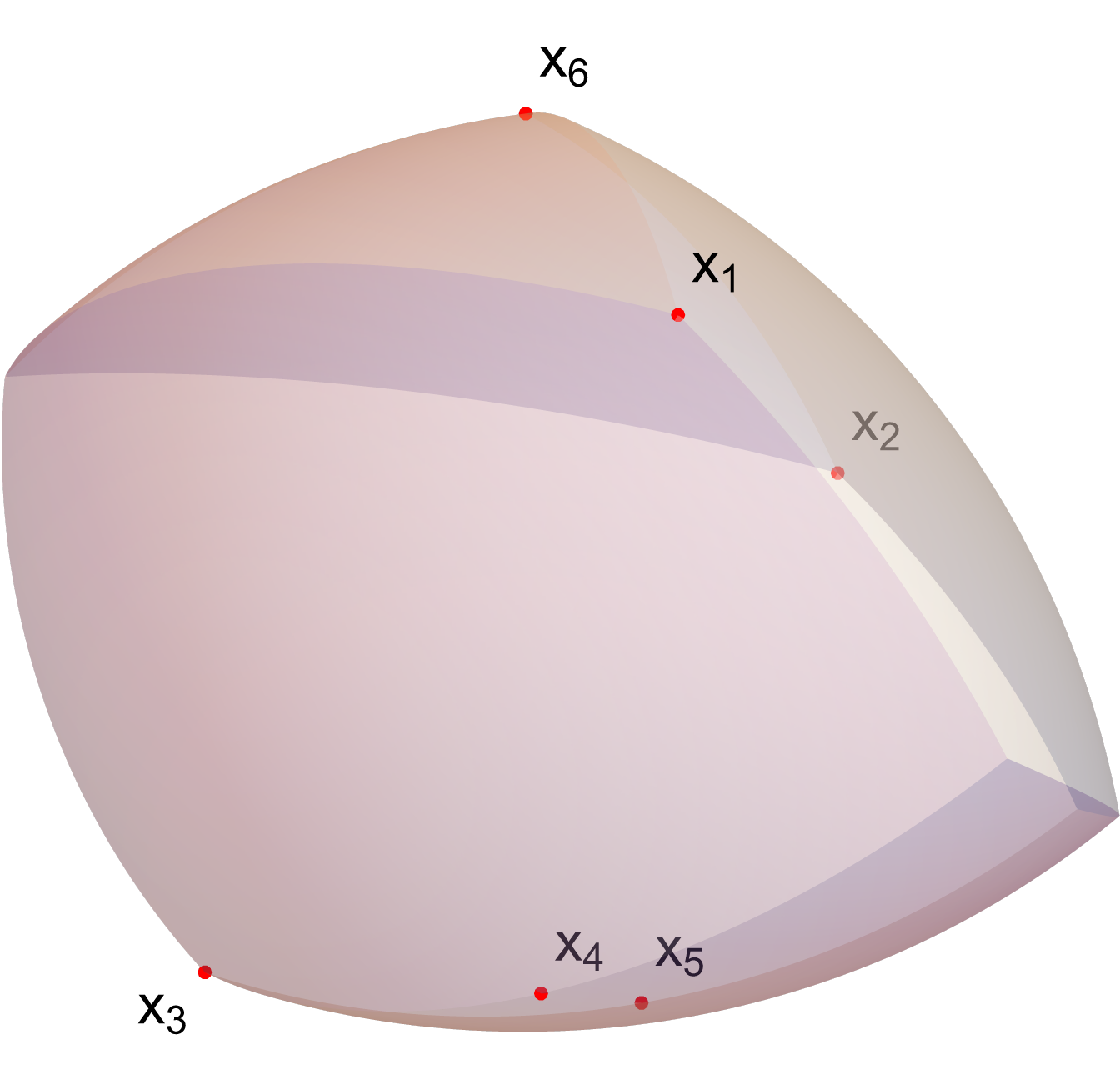}
 \caption{This is an example of a ball polyhedra $B(X)$ such that each vertex of $B(X)$ does not belong to the set of centers $X$.  Therefore, in order to use the Gauss--Bonnet theorem to compute the perimeter of $B(X)$, one would need to find the outstanding vertices to parametrize the edges of each face. This extra step is not necessary for a Reuleaux polyhedra since the set of centers and vertices coincide. }\label{BallPoly}
\end{figure}

\par In the statement below, we will use the notation 
$$
\angle(a,b):=\cos^{-1}\left(\frac{a}{|a|}\cdot \frac{b}{|b|}\right)
$$
for the angle between $a,b\in \R^3$ with $|a|,|b|\neq 0$. 
\begin{thm}\label{PerThm}
Suppose $X=\{x_1,\dots, x_m\}\subset \R^3$ is extremal. Choose positively oriented vertices $\{a_{i,j}\}_{j=1}^{N_j}$ opposite to $x_j$ with corresponding adjacent vertices $\{b_{i,j}\}_{j=1}^{N_j}$ for $j=1,\dots, m$. 
The perimeter of $B(X)$ is given by 
$$
P(B(X))=\sum^m_{j=1}\left\{2\pi -\sum^{N_j}_{i=1}\left(\left|\frac{b_{i,j}-x_j}{2}\right|\psi_{i,j}+\theta_{i,j}\right)\right\},
$$
where 
$$
\psi_{i,j}=\angle\left(a_{i,j}-\frac{b_{i,j}+x_j}{2},a_{i+1,j}-\frac{b_{i,j}+x_j}{2}\right)
$$
and 
$$
\theta_{i,j}=\angle\left((b_{i,j}-x_j)\times \left(a_{i+1,j}-\frac{b_{i,j}+x_j}{2}\right),(b_{i+1,j}-x_j)\times \left(a_{i+1,j}-\frac{b_{i+1,j}+x_j}{2}\right)\right).
$$
\end{thm}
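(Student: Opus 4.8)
The plan is to split the perimeter into the surface areas of the individual faces and evaluate each by the Gauss--Bonnet theorem, exactly as set up in the excerpt. By the faces formula \eqref{BoundaryUnionFormula} the boundary $\partial B(X)$ is the union of the faces $F_j := \partial B(x_j)\cap B(X)$, whose interiors are pairwise disjoint, so
$$
P(B(X)) = \sum_{j=1}^m \sigma(F_j).
$$
Each $F_j$ is a geodesically convex subset of the unit sphere $\partial B(x_j)$, on which the Gaussian curvature is $K\equiv 1$, and $\partial F_j$ is a simple closed positively oriented curve built from the edges $e_{1,j},\dots,e_{N_j,j}$. Hence Gauss--Bonnet gives
$$
\sigma(F_j) = 2\pi - \sum_{i=1}^{N_j}\left(\int_{e_{i,j}} k_g\,ds + \theta_{i,j}\right).
$$
The point of restricting to extremal $X$ is that $X=\mathrm{vert}(B(X))$ together with edge duality lets me name every boundary arc purely in terms of $X$: by \eqref{beyejay} the edge $e_{i,j}$ running from $a_{i,j}$ to $a_{i+1,j}$ lies on $\partial B(x_j)\cap\partial B(b_{i,j})$, so by \eqref{EdgeCircle} it is an arc of the circle centered at $\tfrac{x_j+b_{i,j}}{2}$, of radius $\rho_{i,j} = \sqrt{1-|\tfrac{x_j-b_{i,j}}{2}|^2}$, lying in the plane through $\tfrac{x_j+b_{i,j}}{2}$ normal to $b_{i,j}-x_j$.

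Next I would evaluate $\int_{e_{i,j}} k_g\,ds$. Because $e_{i,j}$ is a small circle of $\partial B(x_j)$ whose plane lies at distance $d_{i,j}:=|\tfrac{x_j-b_{i,j}}{2}|$ from the center $x_j$, its geodesic curvature is constant. The cleanest justification: as a space curve the circle has curvature $1/\rho_{i,j}$, while its normal curvature on the unit sphere has magnitude $1$, so the decomposition $(1/\rho_{i,j})^2 = k_g^2 + 1$ combined with $\rho_{i,j}^2 + d_{i,j}^2 = 1$ yields $k_g = d_{i,j}/\rho_{i,j}$. The arc length of $e_{i,j}$ is $\rho_{i,j}\psi_{i,j}$, where $\psi_{i,j}$ is the central angle of the arc at $\tfrac{x_j+b_{i,j}}{2}$ --- exactly the quantity in the statement. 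Multiplying,
$$
\int_{e_{i,j}} k_g\,ds = \frac{d_{i,j}}{\rho_{i,j}}\cdot \rho_{i,j}\psi_{i,j} = \left|\frac{b_{i,j}-x_j}{2}\right|\psi_{i,j},
$$
which is the first contribution in the theorem.

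For the exterior angle $\theta_{i,j}$ at the shared vertex $a_{i+1,j}$, I would use that the tangent to a circular arc at a point is orthogonal both to the plane's normal and to the radius to that point. Thus the forward tangent of $e_{i,j}$ at $a_{i+1,j}$ is parallel to $(b_{i,j}-x_j)\times(a_{i+1,j}-\tfrac{b_{i,j}+x_j}{2})$, and the forward tangent of $e_{i+1,j}$ at $a_{i+1,j}$ is parallel to $(b_{i+1,j}-x_j)\times(a_{i+1,j}-\tfrac{b_{i+1,j}+x_j}{2})$; the angle between these two vectors is the stated $\theta_{i,j}$. Summing $\sigma(F_j)$ over $j$ and inserting the two computations then produces the asserted formula for $P(B(X))$.

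The main obstacle is the orientation bookkeeping in this last step. Two sign issues must be settled. First, one must check that it is $b_{i,j}-x_j$, rather than its negative, for which the positively oriented traversal of $e_{i,j}$ from $a_{i,j}$ to $a_{i+1,j}$ is counterclockwise, so that the cross products above really are the forward tangents; this is precisely where the orientation convention of \eqref{aeyejay}---interior on the left, viewed from outside $B(x_j)$---is used. Second, Gauss--Bonnet requires the signed exterior angle, whereas $\angle(\cdot,\cdot)$ returns an unsigned value in $[0,\pi]$; the two coincide because each face is geodesically convex, forcing every turning angle to lie in $[0,\pi]$. Once these conventions are fixed, the remaining computation is routine.
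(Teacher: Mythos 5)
Your proposal is correct and follows essentially the same route as the paper's proof: decompose $P(B(X))=\sum_j\sigma(F_j)$, apply Gauss--Bonnet with $K\equiv 1$, evaluate $\int_{e_{i,j}}k_g\,ds=|\tfrac{b_{i,j}-x_j}{2}|\psi_{i,j}$ using the circle structure from \eqref{EdgeCircle} and \eqref{beyejay}, and identify the exterior angles via cross-product tangent vectors (the paper does this through the explicit parametrization \eqref{gammapath}, whose derivative is exactly your cross-product expression). Your explicit treatment of the two sign issues---forward tangent orientation and unsigned versus signed exterior angles via geodesic convexity---is a point the paper leaves implicit, but it does not change the method.
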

\par Next we will compute the volume of a Reuleaux polyhedron $B(X)$. To this end, we will first express the volume of $B(X)$ as an integral over the boundary 
via the divergence theorem
\be\label{divThmFormula}
V(B(X))=\frac{1}{3}\int_{ B(X)} \nabla \cdot x\; dx=\frac{1}{3}\int_{\partial B(X)}x\cdot N d\sigma
\ee
Here the surface integral is taken over the $m$ faces of $\partial B(X)$ and $\sigma$ denotes the corresponding surface measure on each face with outward normal $N$.  After performing an integration by parts, we will arrive at the following expression. 
\begin{thm}\label{VolThm}
With the same hypotheses and notation as Theorem \ref{PerThm}, the volume of $B(X)$ is given by 
\begin{align}
&V(B(X))=\frac{1}{3}P(B(X)) \;\; +\\
&\quad \quad \frac{1}{6}\sum^m_{j=1}x_j\cdot\sum^{N_j}_{i=1}\left(\frac{1}{2}(b_{i,j}-x_j)\times\left(a_{i+1,j}-a_{i,j}\right)+
\psi_{i,j}\left(1-\left|\frac{b_{i,j}-x_j}{2}\right|^2\right)\frac{b_{i,j}-x_j}{|b_{i,j}-x_j|}\right).
\end{align}

\end{thm}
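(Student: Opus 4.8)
The plan is to start from the divergence-theorem identity \eqref{divThmFormula} and reduce the surface integral over each face to a line integral over its boundary; this boundary reduction is exactly the ``key integration by parts'' alluded to above. Fix $j$ and consider the face $F_j=\partial B(x_j)\cap B(X)$. Since $F_j\subset\partial B(x_j)$ and $B(X)\subseteq B(x_j)$, the outward unit normal to $B(X)$ along $F_j$ is the radial field $N=x-x_j$. Writing $x=x_j+(x-x_j)=x_j+N$ and using $|N|=1$ gives $x\cdot N=1+x_j\cdot N$, so that
\[
\frac{1}{3}\int_{F_j}x\cdot N\,d\sigma=\frac{1}{3}\sigma(F_j)+\frac{1}{3}\,x_j\cdot\int_{F_j}N\,d\sigma.
\]
Summing over $j=1,\dots,m$ and recalling $\sum_j\sigma(F_j)=P(B(X))$ produces the leading term $\tfrac13 P(B(X))$; it remains to evaluate the vector integral $\int_{F_j}N\,d\sigma$ for each face.

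Next I would invoke the vector-area identity: for an oriented surface $S$ with boundary, the integral of its unit normal is a boundary integral,
\[
\int_{S}N\,d\sigma=\frac{1}{2}\oint_{\partial S}(x-x_j)\times dx,
\]
which follows from Stokes' theorem applied to a field whose curl is an arbitrary constant vector (e.g.\ $\tfrac12\,c\times(x-x_j)$), and which is independent of the base point since $\oint dx=0$. Because the positively oriented boundary of $F_j$ is the concatenation of the circular arcs $e_{1,j},\dots,e_{N_j,j}$ from \eqref{aeyejay}--\eqref{beyejay}, this gives
\[
\int_{F_j}N\,d\sigma=\frac{1}{2}\sum_{i=1}^{N_j}\int_{e_{i,j}}(x-x_j)\times dx.
\]

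The core computation is then the arc integral. By \eqref{EdgeCircle}, the edge $e_{i,j}$ lies on the circle of radius $\rho_{i,j}=\sqrt{1-|(b_{i,j}-x_j)/2|^2}$ centered at $c_{i,j}=(b_{i,j}+x_j)/2$ in the plane orthogonal to $b_{i,j}-x_j$. I would parametrize $x(t)=c_{i,j}+r(t)$ with $r(t)=\rho_{i,j}(\cos t\,u+\sin t\,v)$ for an orthonormal pair $u,v$ spanning that plane, and split $x-x_j=\tfrac12(b_{i,j}-x_j)+r(t)$. The constant piece contributes $\tfrac12(b_{i,j}-x_j)\times\int_{e_{i,j}}dx=\tfrac12(b_{i,j}-x_j)\times(a_{i+1,j}-a_{i,j})$, since the endpoints of $e_{i,j}$ are $a_{i,j}$ and $a_{i+1,j}$. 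The radial piece uses the elementary identity $r\times r'=\rho_{i,j}^2\,(u\times v)$, a constant vector along the arc pointing along $b_{i,j}-x_j$; integrating over the angular extent $\psi_{i,j}$ swept by the parametrization yields $\rho_{i,j}^2\psi_{i,j}\,(b_{i,j}-x_j)/|b_{i,j}-x_j|$. Multiplying by the overall factor $\tfrac12$ and using $\rho_{i,j}^2=1-|(b_{i,j}-x_j)/2|^2$ reproduces the two summands in the statement; substituting back into the expression for $V(B(X))$ completes the argument.

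The main obstacle I anticipate is orientation bookkeeping. One must check that the positive orientation of the face (interior on the left, viewed from outside $B(x_j)$, as in \eqref{aeyejay}) makes $\partial F_j$ positively oriented with respect to the outward normal $N$, so that the vector area points outward and the frame satisfies $u\times v=+(b_{i,j}-x_j)/|b_{i,j}-x_j|$ with the correct sign; and one must confirm that $\psi_{i,j}=\angle(a_{i,j}-c_{i,j},\,a_{i+1,j}-c_{i,j})$ really is the central angle subtended by the arc, so that it equals the range of the parameter $t$. Once these sign and angle identifications are pinned down, the remaining algebra is routine.
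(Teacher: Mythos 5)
Your proposal is correct, and it reaches the stated formula by a genuinely different route at the key step. Both arguments begin identically: the divergence theorem \eqref{divThmFormula}, the splitting $x\cdot N=1+x_j\cdot N$ on each face $F_j$, and the reduction of the problem to the vector integrals $\int_{F_j}N\,d\sigma$. The paper converts these to boundary integrals via a lemma proved with the Laplace--Beltrami operator: since the coordinate functions on the unit sphere are spherical harmonics with eigenvalue $2$, one has $-\Delta_{\partial B(x_j)}x=2N$, hence $\int_{F_j}N\,d\sigma=-\frac{1}{2}\int_{\partial F_j}\nu\,ds$ with $\nu$ the outward conormal, which is then evaluated edge by edge through a cascade of double-cross-product identities relating $\nu$, $\dot\gamma$, and $\ddot\gamma$ for the parametrization \eqref{gammapath}. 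You instead invoke the classical vector-area identity $\int_{F_j}N\,d\sigma=\frac{1}{2}\oint_{\partial F_j}(x-x_j)\times dx$ from Stokes' theorem, and evaluate the line integral by writing $x-x_j=\frac{1}{2}(b_{i,j}-x_j)+r(t)$ on the circle of center $c_{i,j}=(b_{i,j}+x_j)/2$ and radius $\rho_{i,j}$, with the elementary identity $r\times r'=\rho_{i,j}^{2}\,u\times v$ producing the second summand. In fact the two boundary reductions are the same identity in different clothing: along $\partial F_j$ one has $\nu\,ds=\bigl(\tfrac{\dot\gamma}{|\dot\gamma|}\times N\bigr)|\dot\gamma|\,dt=-(x-x_j)\times dx$, so the paper's conormal lemma restricted to the sphere is exactly your vector-area formula. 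What your route buys is elementarity and generality: no spherical-harmonic input is needed, the vector-area identity holds for any surface patch (not just spherical caps), and your per-edge computation is a single cross product rather than the paper's chain of triple products. The orientation issues you flag are real but are handled at precisely the level of care the paper itself takes: the paper's parametrization \eqref{gammapath} uses the frame $u=(a_{i,j}-c_{i,j})/\rho_{i,j}$, $v=w\times u$ with $w=(b_{i,j}-x_j)/|b_{i,j}-x_j|$, for which $u\times v=w$, which is exactly your required sign convention; and both you and the paper rely on the implicit assumption $\psi_{i,j}\le\pi$ so that $\angle(\cdot,\cdot)\in[0,\pi]$ coincides with the angular extent of the arc.
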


\par  The main virtue of these formulae is that they show that the perimeter and volume of $B(X)$ are both computable in terms of $X$. One only needs to orient the vertices on each face of a Reuleaux polyhedron $B(X)$ and to choose corresponding adjacent vertices in order to quickly approximate the perimeter and volume of $B(X)$.  However, we note that the perimeter of $B(X)$ can be computed once its dual edge pairs and the distance between the endpoints of its edges are known.  This approach was developed by Bogosel \cite{bogosel2023volume} at the same time this article was being written and does indeed appear to be a superior method of computation.

\section{Perimeter formula}\label{PerSect} 
This section is dedicated to proving Theorem \ref{PerThm}. Let us fix $j\in \{1,\dots, m\}$ and denote the edge of $B(X)$ which joins $a_{i,j}$ to $a_{i+1,j}$ as $e_{i,j}$ for $i=1,\dots, N_j$ (where $a_{N_j+1,j}=a_{1,j}$).   According to the Gauss--Bonnet theorem,
\be\label{GBformula}
\sigma(F_j)=2\pi-\sum^{N_j}_{i=1}\left\{\int_{e_{i,j}}k_gds+\theta_{i,j}\right\}.
\ee
Here $F_j:=\partial B(x_j)\cap B(X)$ is the face opposite $x_j$, $k_g$ is the geodesic curvature of $e_{i,j}$ in $\partial B(x_j)$, and $\theta_{i,j}$ is the change in angle at the point in which $e_{i,j}$ connects to $e_{i+1,j}$. 
\par It is routine to verify $k_g=\sqrt{1-r^2}/r$ along $e_{i,j}$, where 
$$
r=\sqrt{1-\left|\frac{b_{i,j}-x_j}{2}\right|^2}
$$
is the radius of the circle including $e_{i,j}$; recall \eqref{EdgeCircle} and \eqref{beyejay}. As a result, the length $e_{i,j}$ is $r\psi_{i,j}$, where 
$$
\psi_{i,j}=\angle\left(a_{i,j}-\frac{b_{i,j}+x_j}{2},a_{i+1,j}-\frac{b_{i,j}+x_j}{2}\right).
$$
Consequently, 
\begin{align}
\int_{e_{i,j}}k_gds=\frac{\sqrt{1-r^2}}{r}\; r\psi_{i,j}=\sqrt{1-r^2}\psi_{i,j}=\left|\frac{b_{i,j}-x_j}{2}\right|\psi_{i,j}.
\end{align}
In view of \eqref{GBformula}, the surface area of the face of $\partial B(X)$ opposite $x_j$ is   
\be\label{sigmaFjay}
\sigma(F_j)=2\pi -\sum^{N_j}_{i=1}\left(\left|\frac{b_{i,j}-x_j}{2}\right|\psi_{i,j}+\theta_{i,j}\right).
\ee

\par We are left to verify each $\theta_{i,j}$ is given by the formula in the statement of 
the theorem. With this in mind, we fix $j\in \{1,\dots,m\}$ and $i\in \{1,\dots, N_j\}$ and parametrize the edge $e_{i,j}$ via
\begin{align}\label{gammapath}
\gamma(t)&=\frac{b_{i,j}+x_j}{2}+ \cos t\left(a_{i,j}-\frac{b_{i,j}+x_j}{2}\right)+\sin t\; \frac{b_{i,j}-x_j}{|b_{i,j}-x_j|}\times\left(a_{i,j}-\frac{b_{i,j}+x_j}{2} \right)
\end{align}
for $t\in [0,\psi_{i,j}]$. Direct computation yields 
\be\label{dotgammaformula}
\dot\gamma(t)=\frac{b_{i,j}-x_j}{|b_{i,j}-x_j|}\times\left(\gamma(t)-\frac{b_{i,j}+x_j}{2}\right).
\ee
We also can write an analogous parametrization $\eta$ of $e_{i+1,j}$ and compute  
\begin{align}
\theta_{i,j}&=\angle\left(\dot\gamma(\psi_{i,j}),\dot\eta(0)\right) \\ 
&=\angle\left((b_{i,j}-x_j)\times \left(a_{i+1,j}-\frac{b_{i,j}+x_j}{2}\right),(b_{i+1,j}-x_j)\times \left(a_{i+1,j}-\frac{b_{i+1,j}+x_j}{2}\right)\right),
\end{align}
which concludes our proof.

\section{Volume formula}\label{VolSect}
Again we will denote 
$F_j=\partial B(x_j)\cap B(X)$ as the face of $B(X)$ opposite $x_j$ with outward unit normal 
$$
N(x)=x-x_j\quad (x\in F_j).
$$
We note that $\partial F_j\subset \partial B(x_j)$ is the union of circular arcs, so it is piecewise smooth with outward normal $\nu$ tangent to $\partial B(x_j)$ that is defined at all but finitely many points on $\partial F_j$. 
Integrating $2N=-\Delta_{\partial B(x_j)}x$ over $F_j$ and integrating by parts leads to the formula
\be\label{IntegralFormulaNormal}
\int_{F_j} N d\sigma= -\frac{1}{2}\int_{\partial F_j} \nu ds.
\ee
See also page 571 of \cite{MR0812455}.

\par Now we are ready to prove Theorem \ref{VolThm}.
\begin{proof}[Proof of Theorem \ref{VolThm}] By \eqref{divThmFormula},
\begin{align}
V(B(X))&=\frac{1}{3}\sum^m_{j=1}\int_{F_j}x\cdot N d\sigma\\
&=\frac{1}{3}\sum^m_{j=1}\int_{F_j}(N+x_j)\cdot N d\sigma\\
&=\frac{1}{3}\sum^m_{j=1}\int_{F_j}(1+x_j\cdot N)d\sigma\\
&=\frac{1}{3}\sum^m_{j=1}\sigma(F_j)+\frac{1}{3}\sum^m_{j=1}x_j\cdot \int_{F_j} N d\sigma.
\end{align}
In view of \eqref{IntegralFormulaNormal}, 
\be\label{2ndToLastVolFormula}
V(B(X))=\frac{1}{3}P( B(X))+\frac{1}{6}\sum^m_{j=1}x_j\cdot \left( -\int_{\partial F_j} \nu ds\right).
\ee
\par Now fix $j\in\{1,\dots,m\}$ and $i\in \{1,\dots, N_j\}$, and let us once again parametrize the edge $e_{i,j}\subset \partial F_j$ by $\gamma(t)$ in \eqref{gammapath} for 
 $t\in [0,\psi_{i,j}]$. By  \eqref{dotgammaformula}, 
 \begin{align}
 \dot \gamma(t)\times(\gamma(t)-x_j)&=
 -\dot \gamma(t)\times\left(\frac{x_j-b_{i,j}}{2}\right)+ \dot \gamma(t)\times\left(\gamma(t)-\frac{x_j+b_{i,j}}{2}\right)\\
 &=-\left(\frac{b_{i,j}-x_j}{2}\right)\times \dot \gamma(t)-\left(1-\left|\frac{b_{i,j}-x_j}{2}\right|^2\right)\frac{b_{i,j}-x_j}{|b_{i,j}-x_j|}.
 \end{align}
It follows that 
\begin{align}
\int_{e_{i,j}}\nu ds&=\int^{\psi_{i,j}}_0\nu(\gamma(t))|\dot\gamma(t)|dt\\
&=\int^{\psi_{i,j}}_0\frac{\dot\gamma(t)}{|\dot\gamma(t)|}\times N(\gamma(t))| \dot\gamma(t)|dt\\
&=\int^{\psi_{i,j}}_0 \dot \gamma(t)\times(\gamma(t)-x_j)dt\\
&=-\left(\frac{b_{i,j}-x_j}{2}\right)\times  (a_{i+1,j}-a_{i,j})-\psi_{i,j}\left(1-\left|\frac{b_{i,j}-x_j}{2}\right|^2\right)\frac{b_{i,j}-x_j}{|b_{i,j}-x_j|}.
\end{align}
As $\partial F_j=\bigcup_{j=1}^{N_j}e_{i,j}$ and each edge overlaps just at the endpoints, 
$$
-\int_{F_j}\nu ds=\sum^{N_j}_{i=1}\left(\frac{1}{2}(b_{i,j}-x_j)\times\left(a_{i+1,j}-a_{i,j}\right)+
\psi_{i,j}\left(1-\left|\frac{b_{i,j}-x_j}{2}\right|^2\right)\frac{b_{i,j}-x_j}{|b_{i,j}-x_j|}\right).
$$
We now conclude by \eqref{2ndToLastVolFormula}. 
\end{proof}

\section{Numerical examples}
We will present a few subsets $X=\{x_1,\dots, x_m\}\subset \R^3$ below which are approximately extremal.  Here we mean 
that $|x_i-x_j|\le 1+\epsilon$ for all $i,j=1,\dots, m$ and $\{x_i,x_j\}$ is considered a diametric pair if $|x_i-x_j|\ge 1-\epsilon$, where $\epsilon$ is the machine epsilon used by 
\texttt{Mathematica} version 13.3.  We will express our coordinates in a finite decimal expansion ending with an apostrophe to emphasize this is a floating point representation; then we will plot the corresponding ball polyhedra $B(X)$ and indicate our perimeter and volume approximations of these shapes.

\begin{ex}\label{BullExample}
{\small \begin{align}
x_1&=(-0.454919332347376`, -0.33326312894833937`, -0.06582850639472673`), \\
x_2&=(-0.40156925513088615`, 0.24090106874115003`,  0.15688873018460048`),\\
x_3&=( 0.07053411741746762`, -0.366760736659379`, -0.4817547821803494`),\\
x_4&= (0.137577027658032`, 0.4594339691017592`, 0.07762662290811241`),\\
x_5&= (0.1725962059626301`, 0.17122989250966542`, -0.6588808147787987`),\\
x_6&= (0.21705359365165164`, -0.5368947429173325`, 0.045805669681666425`),\\
x_7&= (0.3186220388070483`, 0.16138379062317262`, 0.3303509631275429`),\\
x_8&= (0.495725494415533`, -0.1969613868194246`, 0.21291196149887548`).
\end{align}}
\begin{figure}[h]
\centering
   \includegraphics[width=.55\textwidth]{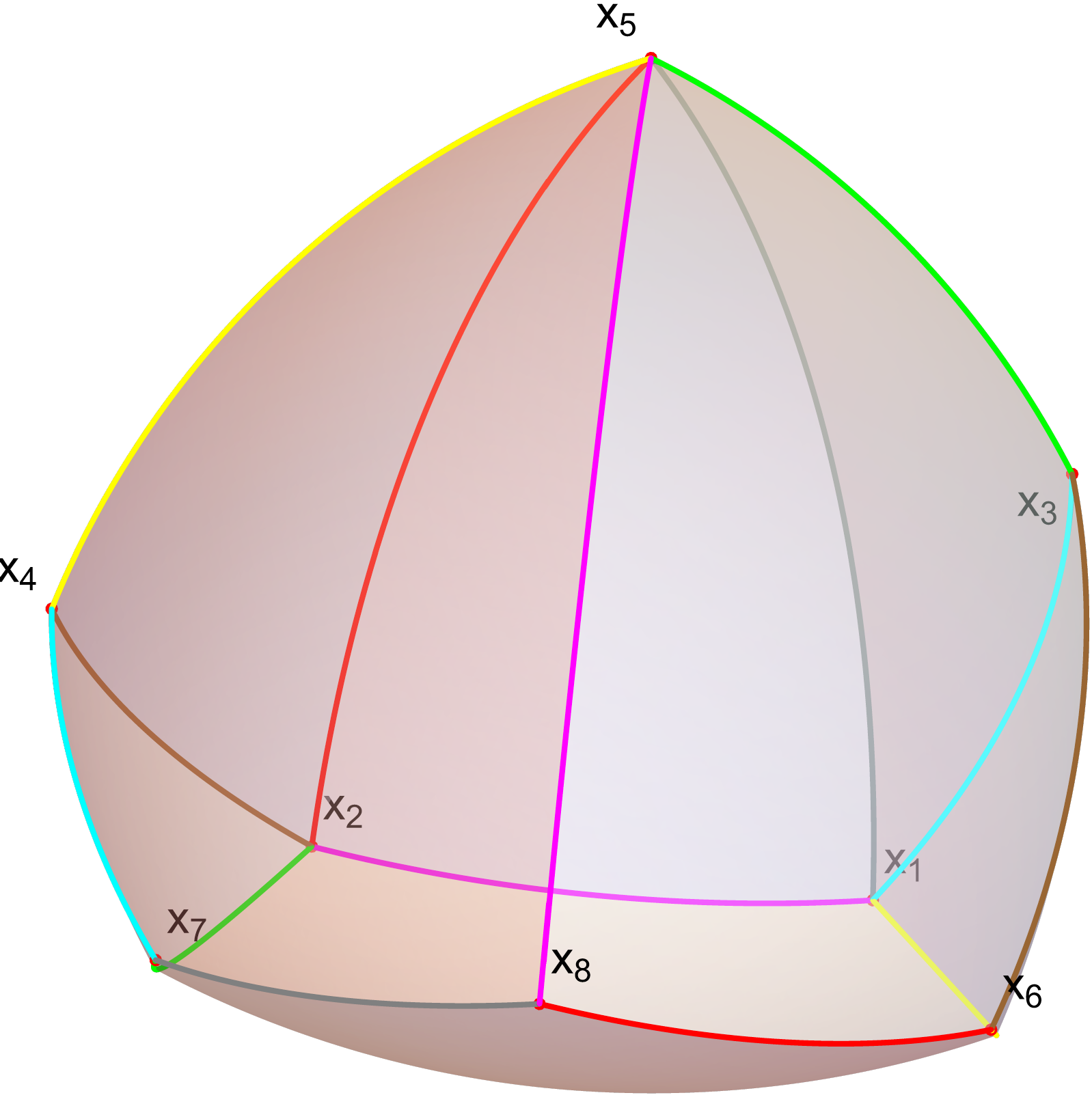} 
 \caption*{ $P(B(X))\approx 3.004217845729678$, $V(B(X))\approx 0.45147884098820945$ }
\end{figure}
\end{ex}
\newpage

\begin{ex}
{\small \begin{align}
x_1&=(-0.2886751345948129`, 0.5`, 0.`), \\
x_2&=(-0.2886751345948129`, -0.5`, 0.`),\\
x_3&=(0.5773502691896258`, 0.`, 0.`),\\
x_4&= (0.`, 0.`, 0.816496580927726`),\\
x_5&= (-0.2210038810439029`, 0.44313191935026985`, 0.325457807012713`),\\
x_6&= (-0.18703718866131322`, 0.2888224121457739`, 0.5764489817169214`),\\
x_7&= (-0.02128367887294722`, -0.45074924952667716`, -0.0759002585247513`),\\
x_8&= (0.12268980214227843`, -0.4088095374975887`, -0.06959966506175008`).
\end{align}}

\begin{figure}[h]
\centering
      \includegraphics[width=.6\textwidth]{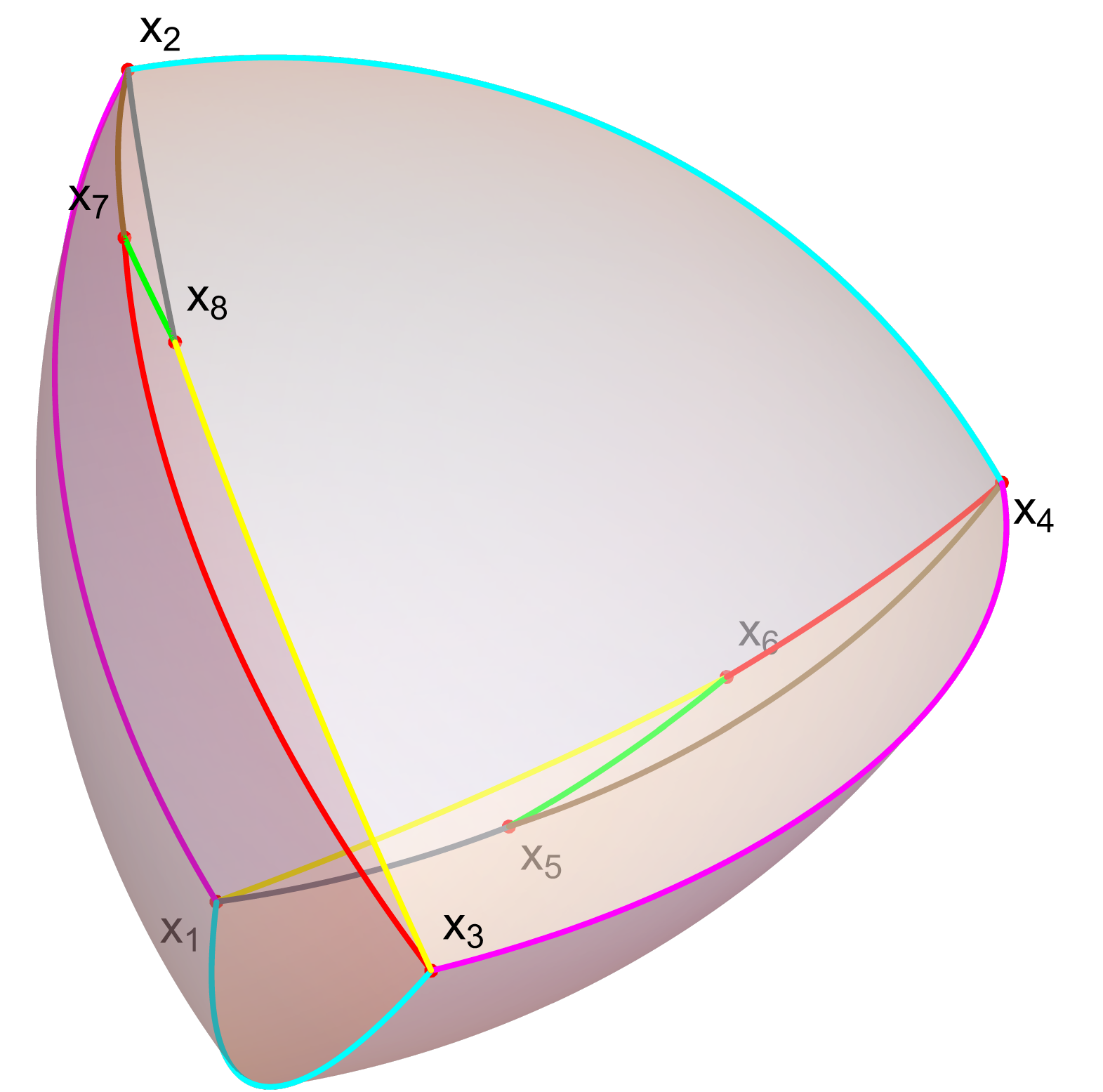} 
 \caption*{ $P(B(X))\approx 2.96308631315525$, $V(B(X))\approx 0.42168401162294744$ }
\end{figure}
\end{ex}

\newpage

\begin{ex}
{\small \begin{align}
x_1&=(0.19889705322366025`, -0.509209185741388`, 0.43182837977567157`),\\
x_2&= (-0.22000635309878455`, -0.3484602553212368`, 0.49776149424867744`),\\
x_3&= (-0.04501256638857934`, -0.4905973154851182`, -0.07629144701261002`),\\
x_4&=( 0.7474800519578605`, -0.27277466456492816`,  0.2564282238891228`),\\
x_5&=( 0.0446997445042288`, -0.0578646460428892`, -0.42174102002734665`),\\
x_6&=(0.2680807002029154`, 0.4385358535711189`, 0.12039144643412861`),\\
x_7&= (0.06662773385191895`, 0.4586282815917317`, 0.2178296321645024`), \\
x_8&=(0.681203847537043`, 0.055629677150635895`, 0.34113614270937886`),\\
x_9&= (0.04735982412164459`, 0.06547174433710401`, -0.3723968976187925`), \\
x_{10}&=(-0.015302651388321266`, 0.3264540060426252`, 0.49950752740506427`).
\end{align}}
\begin{figure}[h]
\centering
      \includegraphics[width=.6\textwidth]{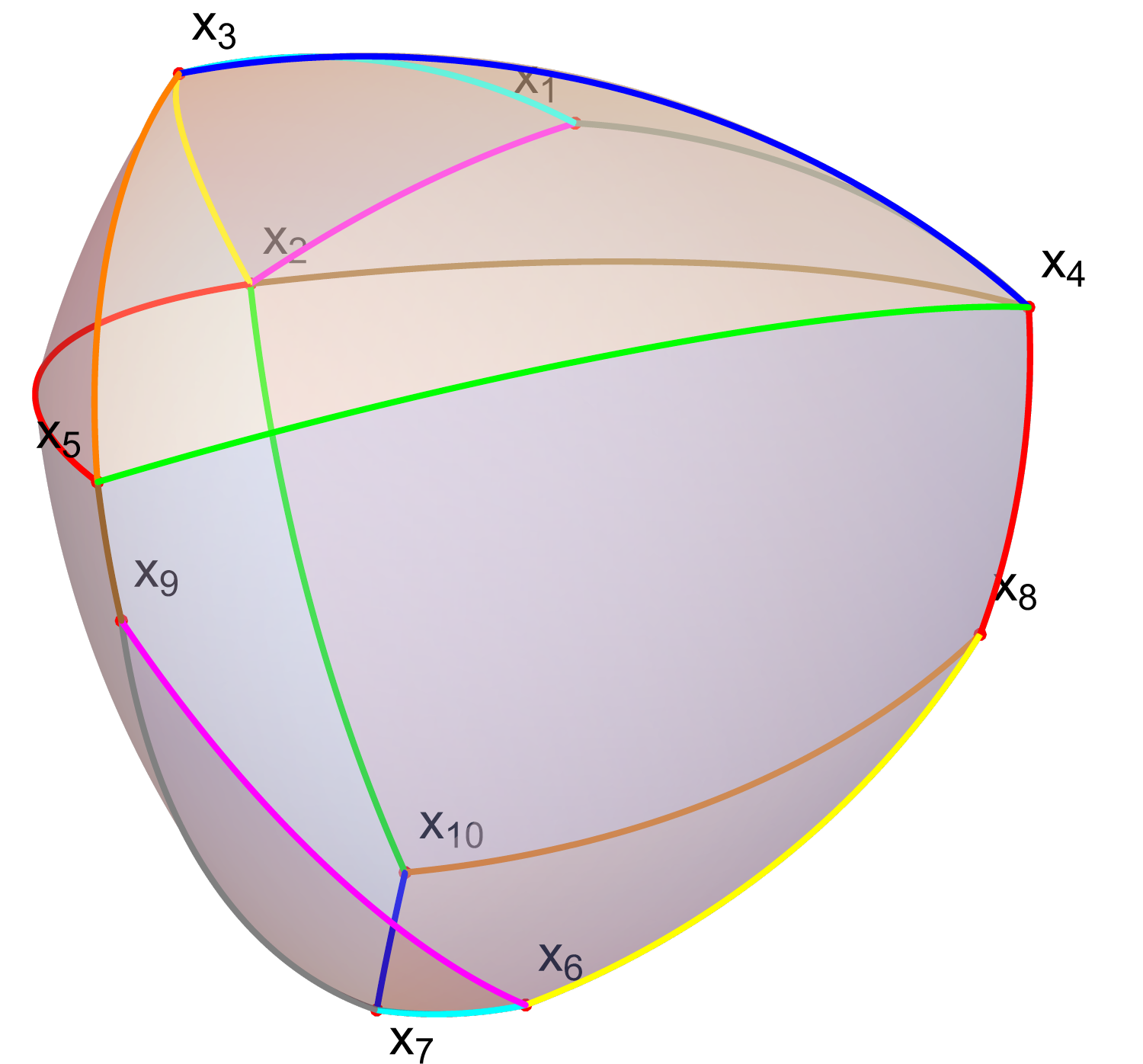} 
 \caption*{ $P(B(X))\approx 3.0006801203477895$, $V(B(X))\approx 0.4499825760002685$ }
\end{figure}

\end{ex}

\newpage 

\begin{ex}
{\small \begin{align}
x_1=&(-0.4492911671196649`, 0.051756018689026004`, 0.40124302234607867`),\\
x_2= & (-0.3444292318383386`, 0.31092146504228`, 0.381659938710443`),\\
x_3= & (-0.17073076130547793`, 0.4731052458004084`, 0.4349796025348303`), \\
x_4=  &(-0.16667966146049476`,  0.26258781488620714`, -0.02222637246905917`), \\
x_5=& (0.017008554645613436`, -0.5024995134353547`,   0.3211845982387496`), \\
x_6=&  (0.13868993504984`, -0.42799467084581205`,  0.13120703483464335`),\\
x_7=  & (0.1473454770808581`, 0.3483283196322836`, 0.761482923670077`),\\
x_8= &  (0.2929193940879094`, 0.2877251625593002`, -0.2260065659145016`), \\
x_9= & (0.3918033569102077`, -0.34691391562342916`, 0.5404496965949347`), \\ 
x_{10}= & (0.41559079487731293`, -0.1802849044395009`, -0.043881905967219983`),\\
 x_{11}=&(0.4550973937486081`, 0.2816385958654753`,  0.7607361817821491`),\\
x_{12}= &  (0.5103125846259885`, -0.1720971456961624`, 0.571684802524544`).
\end{align}}
\begin{figure}[h]
\centering
      \includegraphics[width=.6\textwidth]{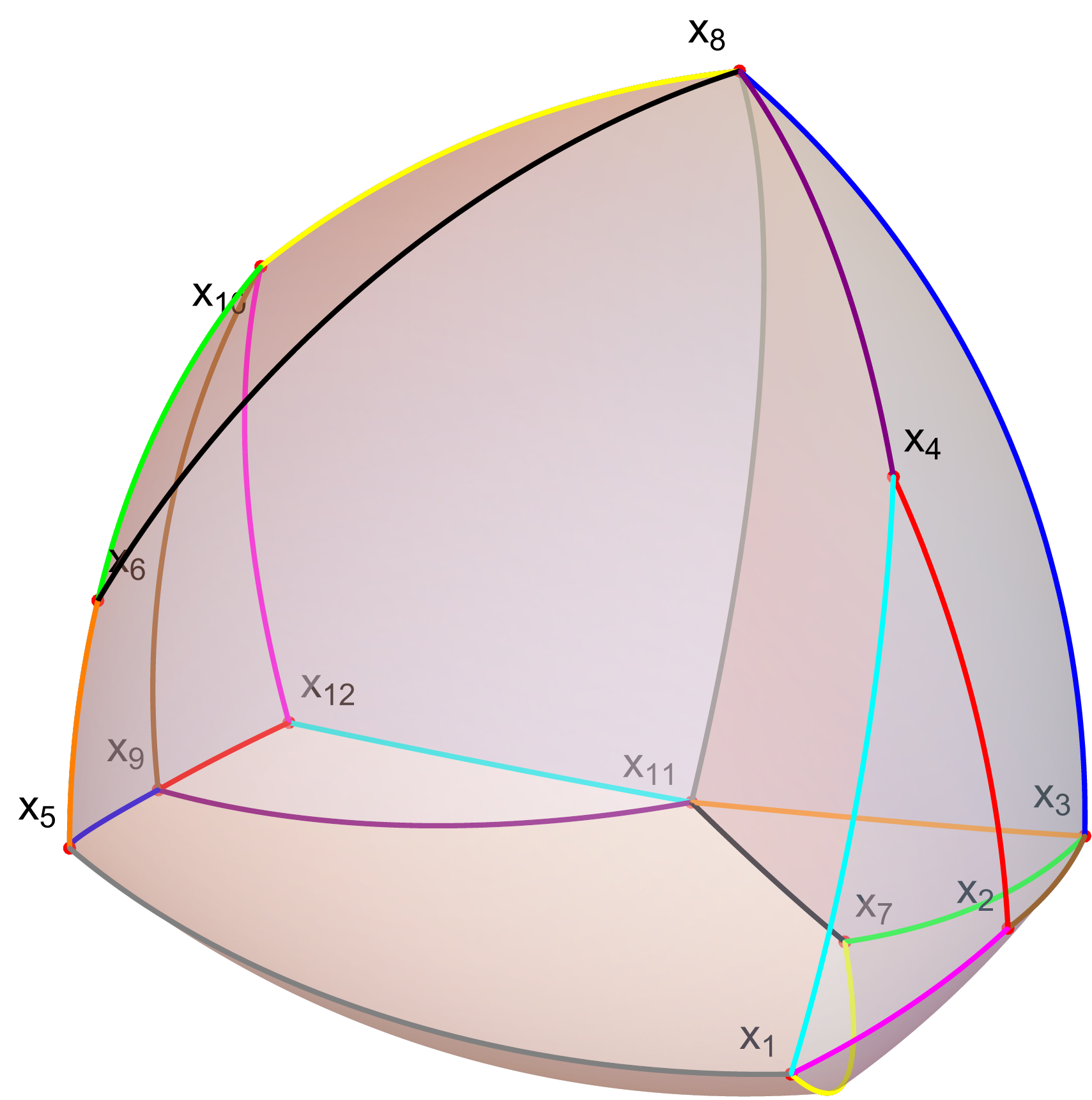} 
 \caption*{ $P(B(X))\approx 3.0036684374206386$, $V(B(X))\approx0.45172374549885497$}
\end{figure}
\end{ex}
\newpage

\section{Pyramids}
A Reuleaux tetrahedron is member of a family pyramids. In general, a member of this group has $n+1$ vertices where $n\ge 3$ and $n$ is odd. 
The first $n$ vertices $\{x_1,\dots, x_n\}$ are the vertices of a regular polygon with diameter one which form the base and the $(n+1)$st vertex $x_{n+1}$ is the apex. Such a collection of points 
$$
Z_n=\{x_1,\dots,x_{n+1}\}
$$
may be expressed explicitly as  
\be\label{PyramidBasePoints}
x_j=\left( \frac{\cos\left(\frac{2\pi j}{n}\right)}{2\cos\left(\frac{\pi}{2n}\right)}, \frac{\sin\left(\frac{2\pi j}{n}\right)}{2\cos\left(\frac{\pi}{2n}\right)} , 0 \right)
\ee
for $j=1,\dots, n$ and 
$$
x_{n+1}=\left(0,0, \sqrt{1-\left(2\cos\left(\frac{\pi}{2n}\right)\right)^{-2}}\right).
$$
A Reuleaux tetrahedron can be designed with these coordinates for $n=3$. 

\par First we will need to establish that $Z_n$ is extremal. We also note that this construction was presented in Example 1.2 of \cite{MR2593321}.
\begin{lem}
For each odd $n\ge 3$, $Z_n$ is extremal.  
\end{lem}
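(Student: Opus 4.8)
The plan is to verify directly that $Z_n$ has diameter one and exactly $2(n+1)-2=2n$ diametric pairs; since $Z_n$ has $m=n+1$ elements, the V\'azsonyi bound quoted above then forces $Z_n$ to be extremal. Writing $R:=\left(2\cos(\pi/(2n))\right)^{-1}$ for the circumradius of the base polygon in \eqref{PyramidBasePoints}, I would first note that $R<1$ for every $n\ge 3$ (since $\cos(\pi/(2n))\ge \cos(\pi/6)=\sqrt{3}/2$), so the apex height $h=\sqrt{1-R^2}$ is well defined and positive. The whole argument then reduces to computing two families of pairwise distances and checking that none exceeds one.

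For the pairs involving the apex, each base vertex $x_j$ $(1\le j\le n)$ lies on the circle of radius $R$ in the plane $z=0$, while $x_{n+1}=(0,0,h)$. Hence $|x_j-x_{n+1}|^2=R^2+h^2=R^2+(1-R^2)=1$, so every one of the $n$ apex--base pairs is diametric. For the base pairs, the distance between two vertices of the regular $n$-gon separated by $d$ steps is $2R\sin(\pi d/n)$ for $d=1,\dots,\lfloor n/2\rfloor$. Using $2R=\sec(\pi/(2n))$, I would solve
$$
\frac{\sin(\pi d/n)}{\cos(\pi/(2n))}=1 \iff \sin\!\left(\frac{\pi d}{n}\right)=\cos\!\left(\frac{\pi}{2n}\right)=\sin\!\left(\frac{\pi(n-1)}{2n}\right),
$$
whose only solution in range is $d=(n-1)/2$. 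Here the hypothesis that $n$ is odd enters twice: it guarantees $(n-1)/2$ is an integer, and it makes $(n-1)/2=\lfloor n/2\rfloor$ the maximal admissible separation, so that $2R\sin(\pi d/n)<1$ for every $d<(n-1)/2$. Thus the diametric base pairs are exactly the $n$ longest diagonals $\{x_j,x_{j+(n-1)/2}\}$ (indices read mod $n$), which form a single $n$-cycle and number $n$.

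Combining the two counts gives $n+n=2n=2(n+1)-2$ diametric pairs, and since the computation also shows that no pairwise distance exceeds one, the diameter of $Z_n$ equals one. Therefore $Z_n$ attains the V\'azsonyi bound and is extremal. The individual calculations are elementary trigonometry; the only point that requires genuine care — and the place where oddness of $n$ is really used — is confirming that $(n-1)/2$ is simultaneously integral and equal to $\lfloor n/2\rfloor$, so that the base contributes precisely $n$ diametric pairs and the total lands exactly on $2m-2$ rather than undershooting it.
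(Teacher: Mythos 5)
Your proof is correct and follows essentially the same route as the paper: a direct trigonometric computation of pairwise distances showing the diameter is one, identification of the $n$ base pairs at separation $(n-1)/2$ plus the $n$ apex--base pairs as the only diametric pairs, and an appeal to the V\'azsonyi bound $2m-2=2n$. Your use of the chord formula $2R\sin(\pi d/n)$ with $d\le\lfloor n/2\rfloor$ is just a slightly tidier packaging of the paper's case split on where $\sin$ is increasing or decreasing.
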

\begin{proof}
Direct computation gives 
\begin{align}
|x_j-x_1|=\frac{\sin\left(\frac{\pi}{n}(j-1)\right)}{\cos\left(\frac{\pi}{2n}\right)}
\end{align}
for $j=1,\dots, n$. Since $\sin$ is increasing on $[0,\pi/2]$, 
$$
\sin\left(\frac{\pi}{n}(j-1)\right)\le \sin\left(\frac{\pi}{n}\frac{n-1}{2}\right)= \sin\left(\frac{\pi}{2}-\frac{\pi}{2n}\right)=\cos\left(\frac{\pi}{2n}\right).
$$
for $j=1,\dots,  1+(n-1)/2$. As $\sin$ is decreasing on $[\pi/2,\pi]$, we can argue similarly to conclude the above inequality for $j=1+(n+1)/2,\dots, n$.  Thus $|x_j-x_1|\le 1$ for $j=1,\dots, n$ and equality holds for $j=1+(n-1)/2$.   

\par Likewise, we find 
$$
|x_j-x_k|=|x_{j-k+1}-x_1|\le 1
$$
for $1\le k\le j\le n$. It is also routine to check that there are $n$ diametric pairs  
$$
\{x_1, x_{1+(n-1)/2}\}, \{x_2, x_{2+(n-1)/2}\},\dots, \{x_{n-1},x_{(n-3)/2}\},\{x_n,x_{(n-1)/2}\}
$$
among $\{x_1,\dots, x_n\}$.  Moreover, 
$$
|x_{n+1}-x_j|=1\quad j=1,\dots, n.
$$
Therefore, $\{x_1,\dots, x_{n+1}\}$ has $2n=2((n+1)-1)$ diametric pairs and is thus extremal. 
\end{proof}
\begin{figure}[h]
\centering
   \includegraphics[width=.44\textwidth]{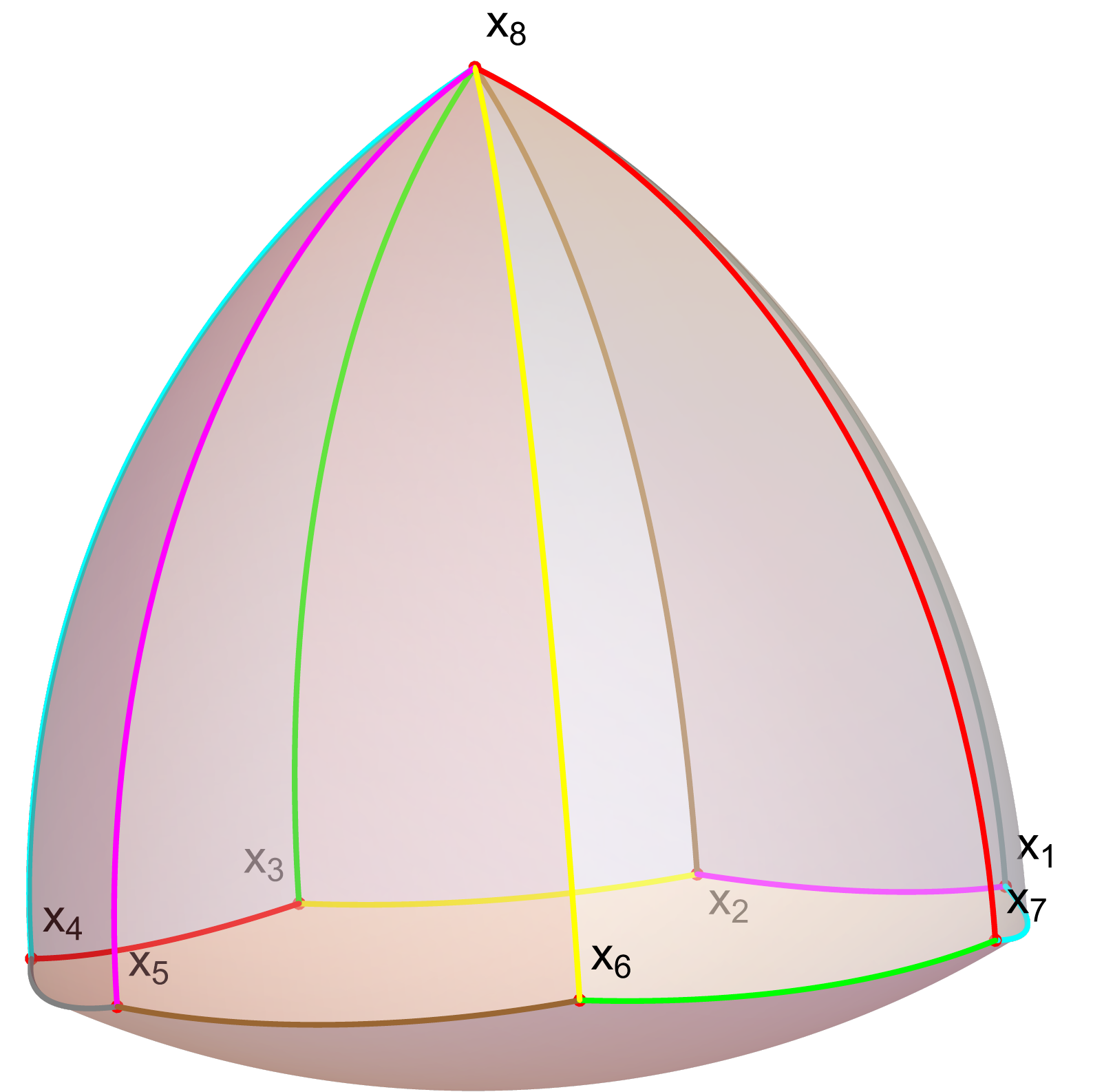} 
    \hspace{.3in}
      \includegraphics[width=.44\textwidth]{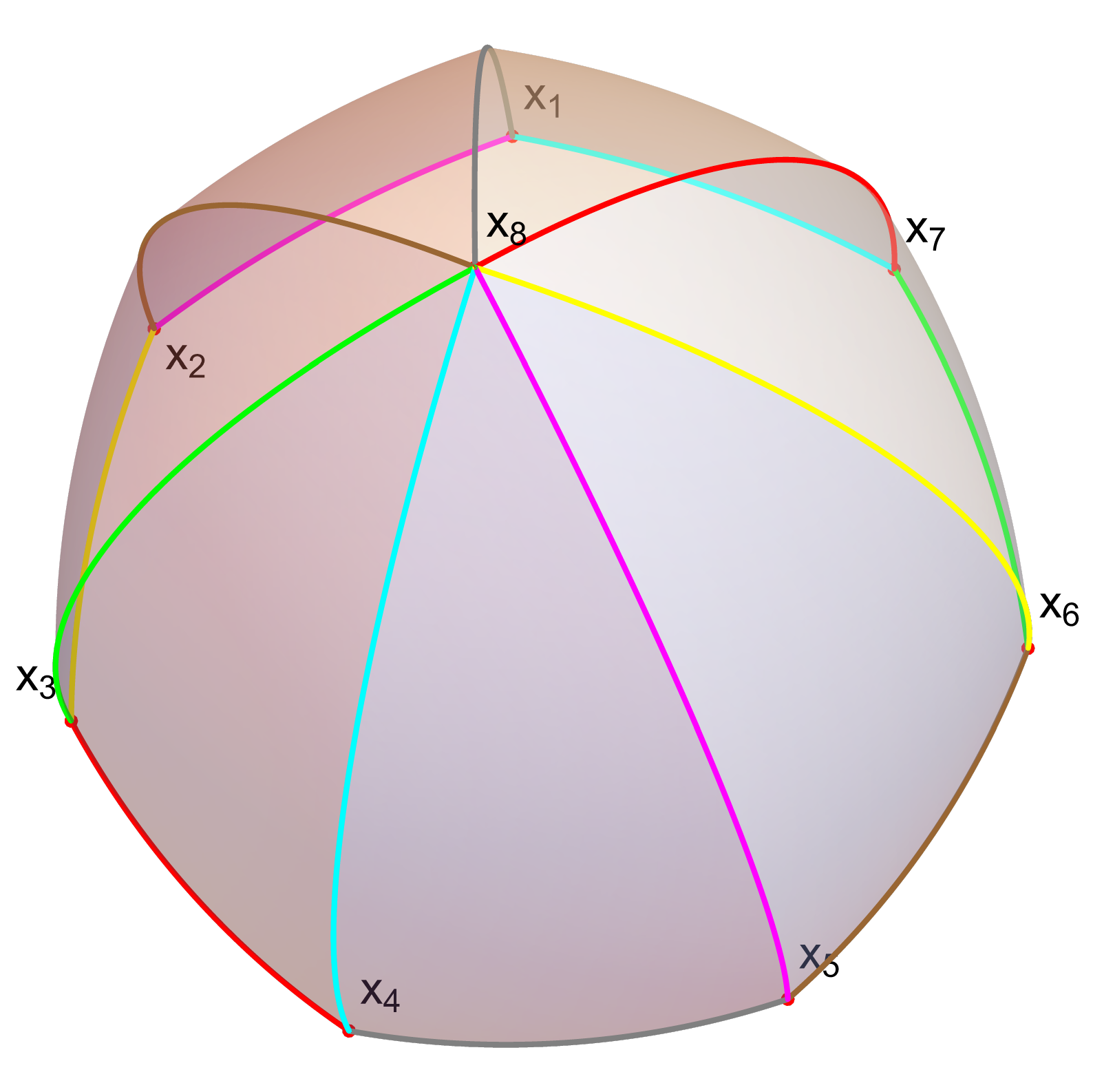} 
 \caption{A Reuleaux pyramid with $7$ base vertices.}\label{ReuleauxSept}
\end{figure}
\par We denote $R_n=B(Z_n)$ as a {\it Reuleaux pyramid} with $n$ base vertices.
See Figures \ref{ReuleauxTetra}, \ref{ReulauxTetraEdge}, and \ref{ReuleauxSept} for Reuleaux pyramids with $n=3,5,$ and $7$ base vertices, respectively.  It is a straightforward task to compute the perimeter and volume of a given Reuleaux pyramid by employing Theorems \ref{PerThm} and \ref{VolThm}. In general, we used \texttt{Mathematica} to find  
\begin{align}
P(R_n) &=2\pi(n+1)-n\left[2\cos ^{-1}\left(\frac{1}{3} \left(4
   \cos \left(\frac{\pi
   }{n}\right)-1\right)\right)+2 \sin
   \left(\frac{\pi }{2 n}\right) \cos^{-1}\left(\frac{\cos \left(\frac{\pi
   }{n}\right)}{\cos \left(\frac{\pi
   }{n}\right)+1}\right)\right.\\
   &\quad \quad  + 2 \cos
   ^{-1}\left(\frac{1}{\sqrt{3}}\tan \left(\frac{\pi }{2
   n}\right)\right)+\cos ^{-1}\left(5-4
   \cos \left(\frac{\pi }{n}\right)-2 \sec\left(\frac{\pi }{2 n}\right)^2\right)\Bigg]
\end{align}
and 
\begin{align}
V(R_n)&=\frac{2\pi}{3}(n+1)-\frac{n}{3}\Bigg[\frac{19}{8}\cos ^{-1}\left(\frac{1}{3} \left(4
   \cos \left(\frac{\pi
   }{n}\right)-1\right)\right) +2 \cos
   ^{-1}\left(\frac{1}{\sqrt{3}}\tan \left(\frac{\pi }{2
   n}\right)\right)\\
   &\quad +\left(\frac{1}{2}\sin
   \left(\frac{\pi }{ n}\right)\cos
   \left(\frac{\pi }{2 n}\right)+2 \sin
   \left(\frac{\pi }{2 n}\right)\right) \cos^{-1}\left(\frac{\cos \left(\frac{\pi
   }{n}\right)}{\cos \left(\frac{\pi
   }{n}\right)+1}\right)\\
   &\left.\quad+\cos ^{-1}\left(5-4
   \cos \left(\frac{\pi }{n}\right)-2 \sec\left(\frac{\pi }{2 n}\right)^2\right)-\frac{1}{4} \sin \left(\frac{\pi
   }{n}\right)\sqrt{4-\sec\left(\frac{\pi }{2 n}\right)^2} \right]
\end{align}
for each odd $n\ge 3.$  Substituting $n=3$, we arrive at Harbourne's formulae  for the perimeter and volume of a Reuleaux tetrahedron mentioned in the introduction. 

\par We close this section with some numerical results obtained from the formulae above. 
\begin{center}
\begin{tabular}{||c |l|} 
 \hline
$n$ & $P(R_n)$ \\
 \hline\hline
 3 & 2.9754717165844013  \\
 \hline
5 & 2.987479950727929 \\
 \hline
7 & 2.9904113459590356 \\
\hline
9 & 2.9915766744579857 \\
\hline 
11 & 2.9921578300569203 \\ 
\hline
13 & 2.992489472713454 \\ 
\hline
15 & 2.9926965856128667 \\
\hline 
17 & 2.992834591165926 \\ 
\hline
19 & 2.9929311619533574 \\ 
 \hline
21 & 2.9930013781619644 \\ 
 \hline
 23 & 2.9930540308850655\\
  \hline
 25 & 2.9930945274828598\\
   \hline
\end{tabular}
\hspace{.5in}
\begin{tabular}{||c |l|} 
 \hline
$n$ & $V(R_n)$ \\
 \hline\hline
 3 & 0.4221577331158264  \\
 \hline
5 & 0.44065107464468123 \\
 \hline
 7& 0.44508906045965013\\
  \hline
 9& 0.4468455644780944\\ 
 \hline
11 & 0.4477199306410655 \\
\hline
13 & 0.4482184208305563 \\
\hline 
15 & 0.448529556413958 \\ 
\hline
17 & 0.4487368010969238 \\ 
\hline
19 & 0.44888178739480866 \\
\hline 
21 & 0.44898718810471205 \\ 
\hline
23 & 0.4490662144272868 \\ 
 \hline
25 & 0.4491269898003859 \\ 
   \hline
\end{tabular}
\end{center}

\section{Elongated pyramids}
We will now discuss a variant of the Reuleaux pyramids.  In particular, we will define
a set of points which are the vertices of a polyhedron in $\R^3$ called an elongated pyramid.  This set of points will consist of the vertices of a regular polygon $\{x_1,\dots, x_n\}$ with diameter one, another set of vertices $\{x_{n+1},\dots, x_{2n}\}$ of a regular polygon in a parallel plane, and an apex vertex $x_{2n+1}$. 
\begin{prop}\label{EPpointsProp}
Fix $t\in (0,1)$ and $n\ge 3$ odd. Assume $x_j$ is given by \eqref{PyramidBasePoints} and set
$$
x_{n+j}=tx_j-\alpha e_n
$$
for $j=1,\dots, n$, 
where 
$$
\alpha=\sqrt{1-\frac{t^2+1+2t\cos\left(\frac{\pi}{n}\right)}{(2\cos(\pi/(2n)))^2}}.
$$
Define $x_{2n+1}=\beta e_n$, with 
$$
\beta=\sqrt{1-\frac{t^2}{(2\cos(\pi/(2n)))^2}}-\alpha.
$$
Then $\{x_1,\dots, x_{2n+1}\}\subset \R^3$ is extremal. 
\end{prop}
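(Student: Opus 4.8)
The plan is to produce $4n=2(2n+1)-2$ diametric pairs and to confirm that every pairwise distance is at most one. Since the set has $m=2n+1$ points, the V\'azsonyi bound recalled in the introduction caps the number of diametric pairs at $2m-2=4n$; hence once we know the diameter is one and we have exhibited $4n$ pairs at distance one, the set is automatically extremal, and we never have to verify by hand that no \emph{further} pair is diametric.

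Write $c=2\cos(\pi/(2n))$, so that each base vertex satisfies $|x_j|=1/c$ and $c^2=2(1+\cos(\pi/n))$. I would organize the verification according to the five types of pairs. The base--base distances are exactly those treated in the preceding Reuleaux pyramid lemma: the maximum equals $1$, attained on the $n$ long diagonals, and every other base--base distance is strictly smaller. The second-polygon distances are scaled copies, $|x_{n+i}-x_{n+j}|=t\,|x_i-x_j|\le t<1$, so they contribute nothing. For the apex--second pairs, orthogonality of the horizontal and vertical components gives $|x_{2n+1}-x_{n+j}|^2=t^2/c^2+(\alpha+\beta)^2$, and the defining relation $\alpha+\beta=\sqrt{1-t^2/c^2}$ makes this equal to $1$ for every $j$; this is precisely how $\beta$ was chosen, and it yields $n$ diametric pairs. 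For the base--second pairs a direct expansion gives
\[
|x_i-x_{n+j}|^2=1-\frac{2t}{c^2}\Bigl(\cos\tfrac{2\pi(i-j)}{n}+\cos\tfrac{\pi}{n}\Bigr).
\]
Because $n$ is odd, $\cos(2\pi k/n)$ is minimized over integers $k$ at $k=\pm(n-1)/2$, where it equals $-\cos(\pi/n)$; thus the bracket is nonnegative, each such distance is at most one, and equality holds exactly when $i-j\equiv\pm(n-1)/2\pmod n$. For each $i$ there are two admissible $j$, so this family contributes $2n$ diametric pairs, and this is exactly the role of the choice of $\alpha$. Tallying the three contributing families produces $n+n+2n=4n$ diametric pairs, all distinct since they involve distinct families of vertices.

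The only delicate point is to check that the apex creates no extra diametric pairs with the base, i.e. that $|x_{2n+1}-x_j|^2=1/c^2+\beta^2<1$. I would reduce this to $\beta<\sqrt{1-1/c^2}$ (here $\beta>0$ because $\alpha<\sqrt{1-t^2/c^2}$), substitute $\beta=\sqrt{1-t^2/c^2}-\alpha$, clear the square roots, and abbreviate $\gamma=\cos(\pi/n)\in(0,1)$ together with $c^2=2(1+\gamma)$. After cancelling the common positive factor $(1-t)/\bigl(4(1+\gamma)^2\bigr)$, the inequality collapses to the polynomial statement
\[
(2+2\gamma-t^2)(1+t+2\gamma)>(1-t)(1+t+\gamma)^2,
\]
whose difference of sides simplifies to $1+t+4\gamma+2\gamma t+3\gamma^2+t\gamma^2$. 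Every coefficient is positive and $t,\gamma>0$, so the inequality holds for all admissible $t\in(0,1)$ and odd $n\ge3$. I expect this positivity computation to be the main obstacle, as it is the only place where the exact values of $\alpha$ and $\beta$ interact nontrivially; the remaining distance estimates are immediate once the identity $c^2=2(1+\cos(\pi/n))$ and the monotonicity of $\cos$ are in hand.

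With every pairwise distance shown to be at most one and the value one attained, the diameter of $\{x_1,\dots,x_{2n+1}\}$ is exactly one. Combining the $4n$ exhibited diametric pairs with the V\'azsonyi bound forces there to be exactly $4n=2m-2$ of them, and therefore the set is extremal.
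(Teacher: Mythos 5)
Your proof is correct, and its skeleton is the same as the paper's: both arguments check that all pairwise distances are at most one, exhibit the same $4n$ diametric pairs in the same three families (the $n$ base diagonals, the $2n$ base--upper pairs with $i-j\equiv\pm(n-1)/2\pmod n$, and the $n$ apex--upper pairs forced by $\alpha+\beta=\sqrt{1-t^2/c^2}$, in your notation $c=2\cos(\pi/(2n))$, $\gamma=\cos(\pi/n)$), and then conclude extremality from the V\'azsonyi bound $2m-2=4n$. The only genuine divergence is the apex--base estimate $1/c^2+\beta^2<1$. The paper dispatches it with the elementary inequality $\sqrt{a}-\sqrt{b}\le\sqrt{a-b}$ (for $a\ge b\ge 0$), which applied to $\beta=\sqrt{1-t^2/c^2}-\alpha$ gives $\beta^2\le\bigl(1+2t\gamma\bigr)/c^2$, hence $1/c^2+\beta^2\le\bigl(2+2t\gamma\bigr)/c^2<1$ since $t<1$ and $c^2=2(1+\gamma)$; you instead square out $\beta<\sqrt{1-1/c^2}$ and verify a polynomial inequality. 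I checked your expansion: the difference of the two sides of $(2+2\gamma-t^2)(1+t+2\gamma)>(1-t)(1+t+\gamma)^2$ is indeed $1+t+4\gamma+2\gamma t+3\gamma^2+t\gamma^2$ (your cancelled prefactor should be $(1-t)/(1+\gamma)^2$ rather than $(1-t)/(4(1+\gamma)^2)$, but this is an immaterial positive constant), so your route is valid, merely heavier: the paper's trick buys brevity, while your computation is routine and mechanical. An even shorter version of your route exists: with $A=1-t^2/c^2$, $B=\alpha^2$, $C=1-1/c^2$, one computes $A-B-C=\gamma(t-1)/(1+\gamma)<0$, so $\sqrt{A}<\sqrt{B}+\sqrt{C}$ follows after a single squaring, with no expansion at all. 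One small point in your favor: you explicitly dispose of the upper-polygon pairs via $|x_{n+i}-x_{n+j}|=t\,|x_i-x_j|\le t<1$, a step the paper uses only implicitly when it asserts that $\{x_1,\dots,x_{2n}\}$ has exactly $3n$ diametric pairs.
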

\begin{figure}[h]
\centering
   \includegraphics[width=.44\textwidth]{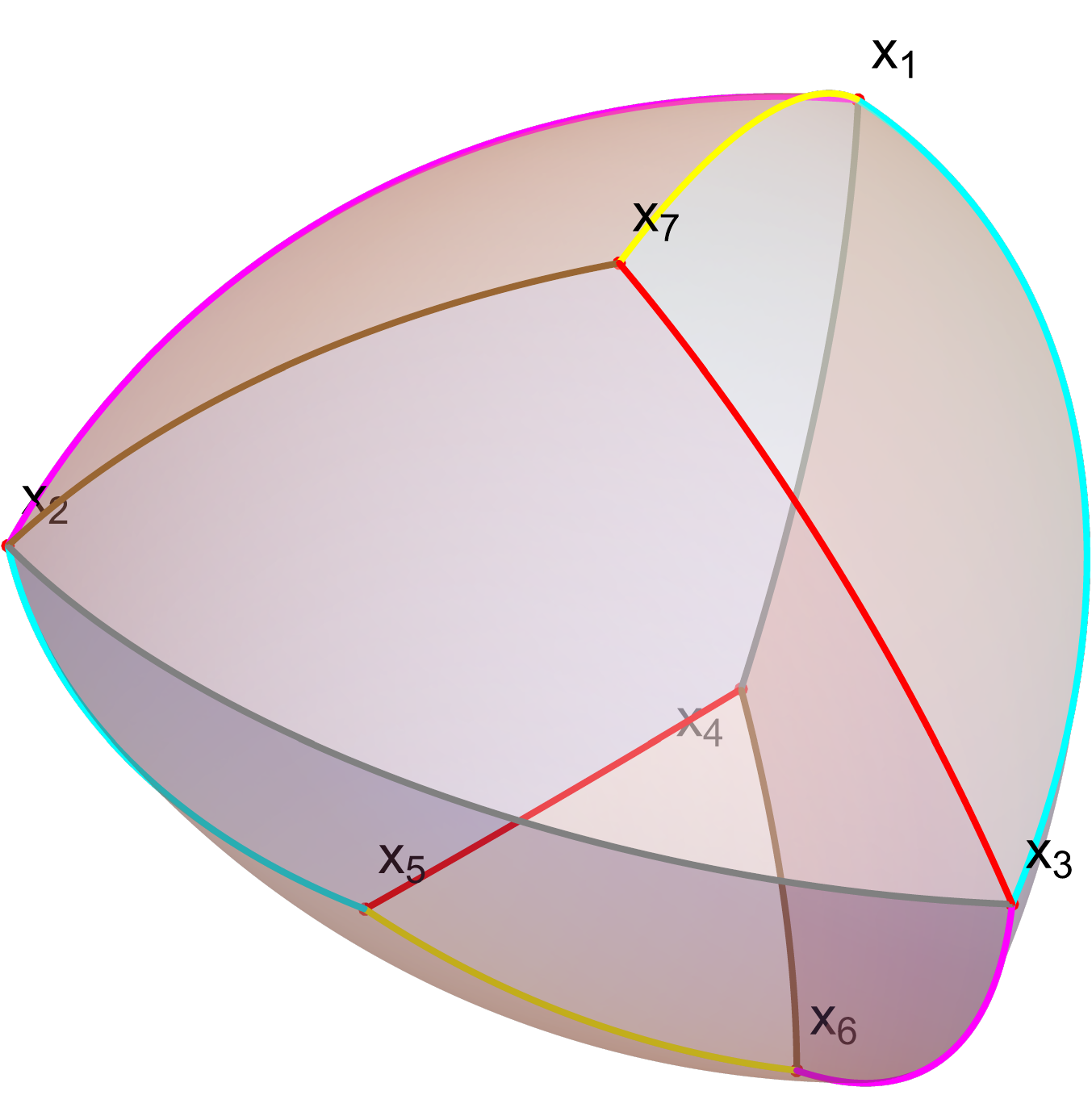} 
    \hspace{.3in}
      \includegraphics[width=.44\textwidth]{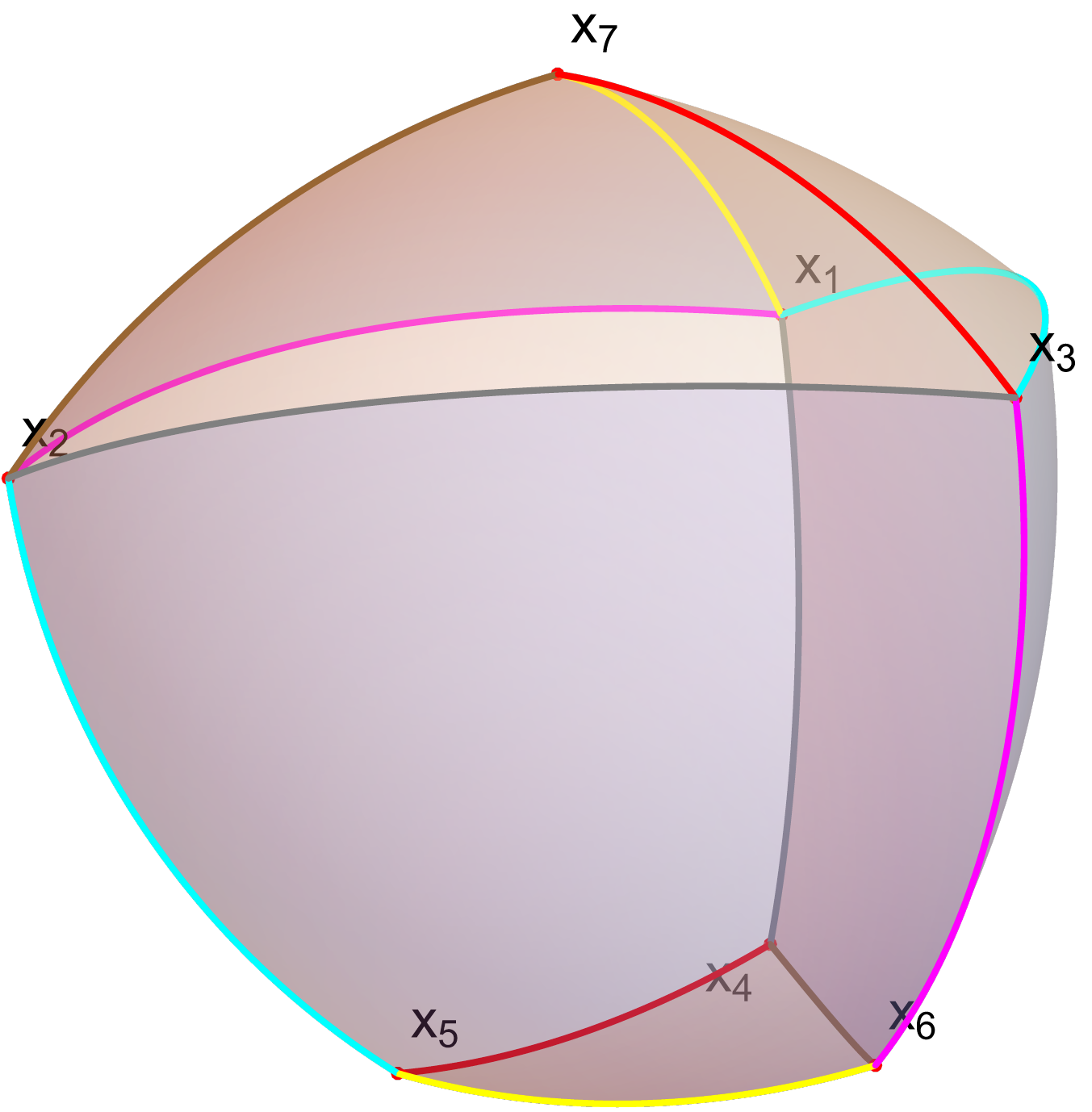} 
 \caption{Two views of the Reuleaux elongated pyramid $B(\{x_1,\dots, x_{2n+1}\})$, where $n=3$ and $t=1/2$.}\label{ReuleauxEP}
\end{figure}
\begin{proof}
It suffices to show that $\{x_1,\dots, x_{2n+1}\}$ has diameter one and that there are $4n=2((2n+1)-1)$ diametric pairs.   First, we recall that there are $n$ diametric pairs among the subset $\{x_1,\dots, x_n\}$.  Next, we fix $j=1,\dots, n$ and observe that for $i=1,\dots, n$,
\begin{align}
|x_j-x_{i+n}|^2&=|x_j-tx_i-\alpha e_n|^2\\
&=|x_j-tx_i|^2+\alpha^2\\
&=|x_j|^2+t^2|x_i|^2-2tx_i\cdot x_j+\alpha^2\\
&=\frac{t^2+1-2t\cos\left(\frac{2\pi}{n}(i-j)\right)}{(2\cos(\pi/(2n)))^2}+1-\frac{t^2+1+2t\cos\left(\frac{\pi}{n}\right)}{(2\cos(\pi/(2n)))^2}\\
&=1-2t\frac{\cos\left(\frac{2\pi}{n}(i-j)\right)+\cos\left(\frac{\pi}{n}\right)}{(2\cos(\pi/(2n)))^2}.
\end{align}
Since $\cos$ is decreasing on $[0,\pi]$,  
$$
\cos\left(\frac{2\pi}{n}k\right)\ge\cos\left(\frac{2\pi}{n}\frac{n-1}{2}\right)=-\cos\left(\frac{\pi}{n}\right)\;\;\text{for}\;\; k=0,\dots, \frac{n-1}{2}.
$$
And as $\cos$ is increasing on $[\pi,2\pi]$,
$$
\cos\left(\frac{2\pi}{n}k\right)\ge\cos\left(\frac{2\pi}{n}\frac{n+1}{2}\right)=-\cos\left(\frac{\pi}{n}\right)\;\;\text{for}\;\; k=\frac{n+1}{2},\dots,n.
$$
Thus, $|x_j-x_{i+n}|\le 1$ for all $i,j=1,\dots, n$ and there are two values of $i$ such that $|x_j-x_{i+n}|= 1$. It follows 
that $\{x_1,\dots, x_{2n}\}$ has diameter one and has $3n$ diametric pairs. 

\par Now we will add $x_{2n+1}= \beta e_n$ to our collection of points. Notice 
\begin{align}
|x_{2n+1}-x_{j+n}|^2&=(\beta+\alpha)^2+t^2|x_j|^2\\
&=1-\frac{t^2}{(2\cos(\pi/(2n)))^2}+t^2\frac{1}{(2\cos(\pi/(2n)))^2}\\
&=1
\end{align}
for $j=1,\dots, n$. And in view of the elementary inequality $\sqrt{a}-\sqrt{b}\le \sqrt{a-b}$ for $a\ge b$, 
$$
\beta^2\le \frac{1+2t\cos\left(\frac{\pi}{n}\right)}{(2\cos(\pi/(2n)))^2}.
$$
Since $t\in (0,1)$, it then follows that 
$$
|x_{2n+1}-x_{j}|^2=\beta^2+|x_j|^2< \frac{2(1+\cos\left(\frac{\pi}{n}\right))}{(2\cos(\pi/(2n)))^2}=1
$$
for $j=1,\dots, n$.  Therefore, $\{x_1,\dots, x_{2n+1}\}$ has diameter one and has $4n$ diametric pairs. We conclude that this set of points 
is extremal. 
\end{proof}
\begin{figure}[h]
\centering
   \includegraphics[width=.44\textwidth]{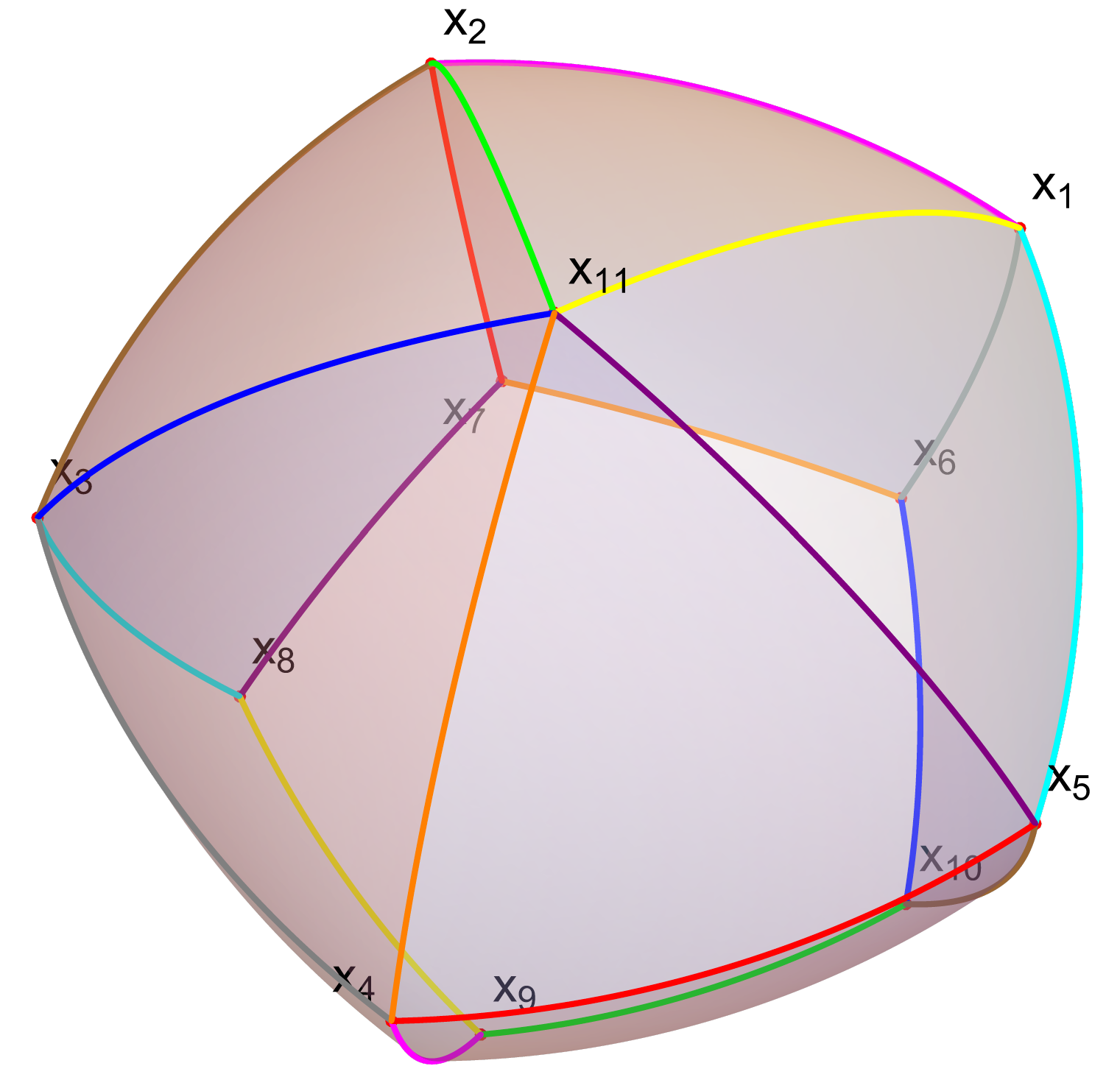} 
    \hspace{.3in}
      \includegraphics[width=.44\textwidth]{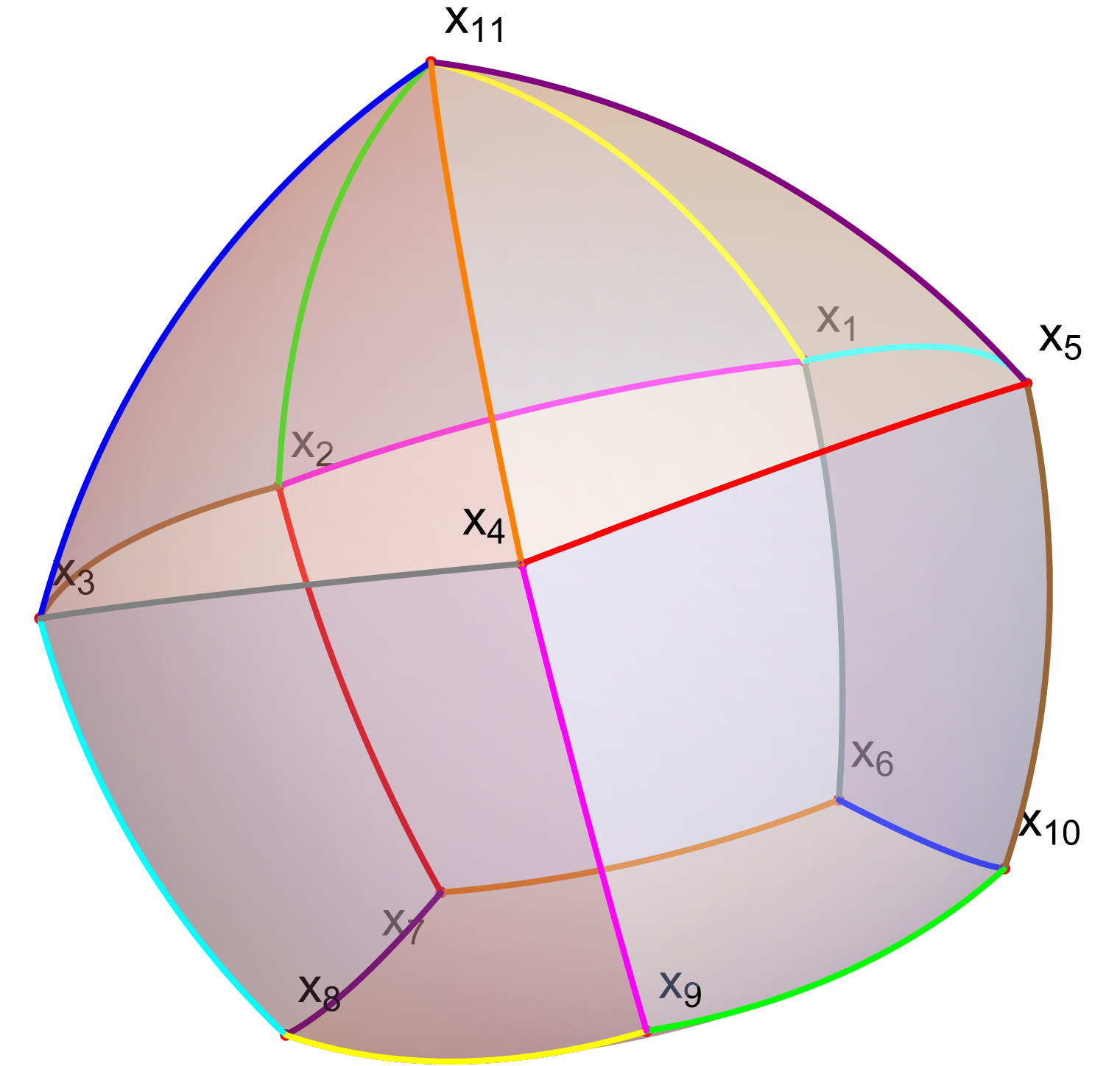} 
 \caption{The Reuleaux elongated pyramid $E_{5,3/4}$.}
\end{figure}
\par We will write $E_{n,t}=B(\{x_1,\dots, x_{2n+1}\})$
and call this shape a {\it Reuleaux elongated pyramid}. Given the symmetry of $E_{n,t}$, it is possible to compute its perimeter and volume explicitly. However, these formulae are cumbersome, so we will just present a few numerical computations in a table below. 
\begin{center}
\begin{tabular}{||c|c|l|} 
 \hline
$n$ & $t$ & $P(E_{n,t})$ \\
 \hline\hline
 3 & 1/2 & 2.9931270190442447  \\
 \hline
5 & 3/4 &3.0631372552821503 \\
 \hline
7 & 3/4 &3.0771714565245922 \\
\hline
9 & 4/5 & 3.078504604790311 \\
\hline 
11 &5/6& 3.0759025640806144 \\ 
\hline
13 &2/5 &3.0595001998462337 \\ 
\hline
15 & 1/5 &3.0268739279677703 \\
\hline 
17 &1/10& 3.0097623900581425 \\ 
   \hline
\end{tabular}
\hspace{.5in}
\begin{tabular}{||c |c |l|} 
 \hline
$n$ & $t$& $V(E_{n,t})$ \\
 \hline\hline
 3 & 1/2 &0.4434445124846693  \\
 \hline
5 & 3/4 & 0.48312463940394 \\
 \hline
 7& 3/4& 0.4888050312329182\\
  \hline
 9& 4/5& 0.4916683622752752\\ 
 \hline
11 &5/6& 0.49047575366738516 \\
\hline
13 &2/5 &  0.4821997310410573 \\
\hline 
15 &1/5&  0.4658188594948641 \\ 
\hline
17 & 1/10& 0.4572829425509739 \\ 
   \hline
\end{tabular}
\end{center}

\section{Diminished trapezohedra}\label{DiminishedTrapSect}
In this section, we will construct extremal sets whose vertices are the same as the vertices of polyhedra in $\R^3$ known as diminished trapezohedra.  
To this end, we suppose $n\ge 4$ is even and $x_1,\dots, x_n$ are the vertices of a regular polygon of diameter one centered at the origin.  For example, we may take
\be\label{OtherPyramidBasePoints}
x_j=\left( \frac{1}{2}\cos\left(\frac{2\pi j}{n}\right), \frac{1}{2}\sin\left(\frac{2\pi j}{n}\right) , 0 \right)
\ee
for $j=1,\dots, n$. Next we choose
\be
x_{n+j}=Tx_j+\sin\left(\frac{\pi}{2n}\right) e_3
\ee
for $j=1,\dots, n$ and 
\be\label{Tsymm}
T=
\left(
\begin{array}{ccc}
\cos\left(\frac{\pi}{n}\right)&-\sin\left(\frac{\pi}{n}\right)&0\\
\sin\left(\frac{\pi}{n}\right)&\cos\left(\frac{\pi}{n}\right)&0\\
0&0&1
\end{array}
\right).
\ee
Finally, we select $x_{2n+1}=(\sqrt{3}/2)e_3$. We will verify that $\{x_1,\dots, x_{2n+1}\}$  is extremal and say $D_n=B(\{x_1,\dots, x_{2n+1}\})$ is a {\it Reuleaux diminished trapezohedron}. 
\begin{figure}[h]
\centering
   \includegraphics[width=.44\textwidth]{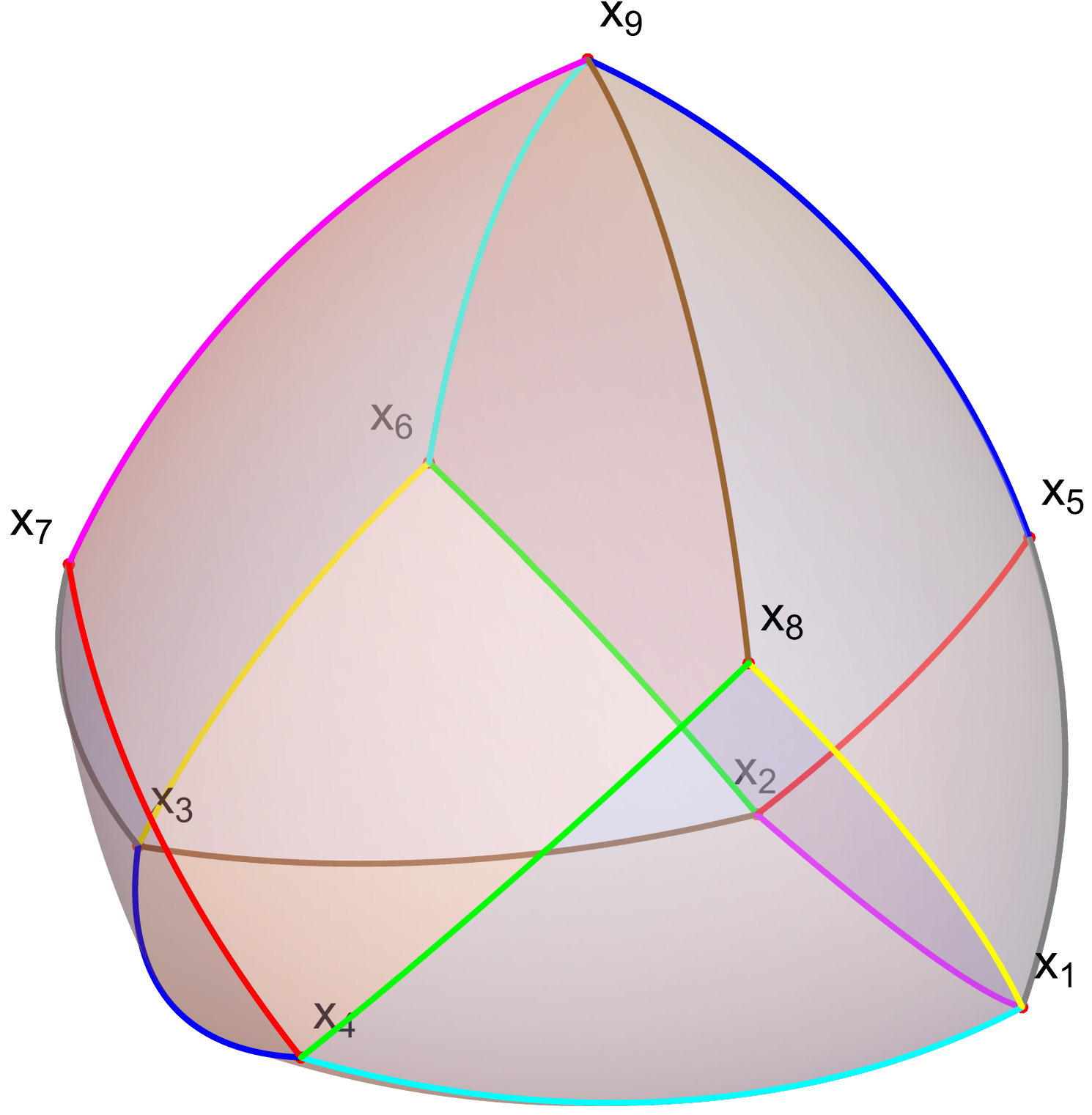} 
    \hspace{.3in}
      \includegraphics[width=.44\textwidth]{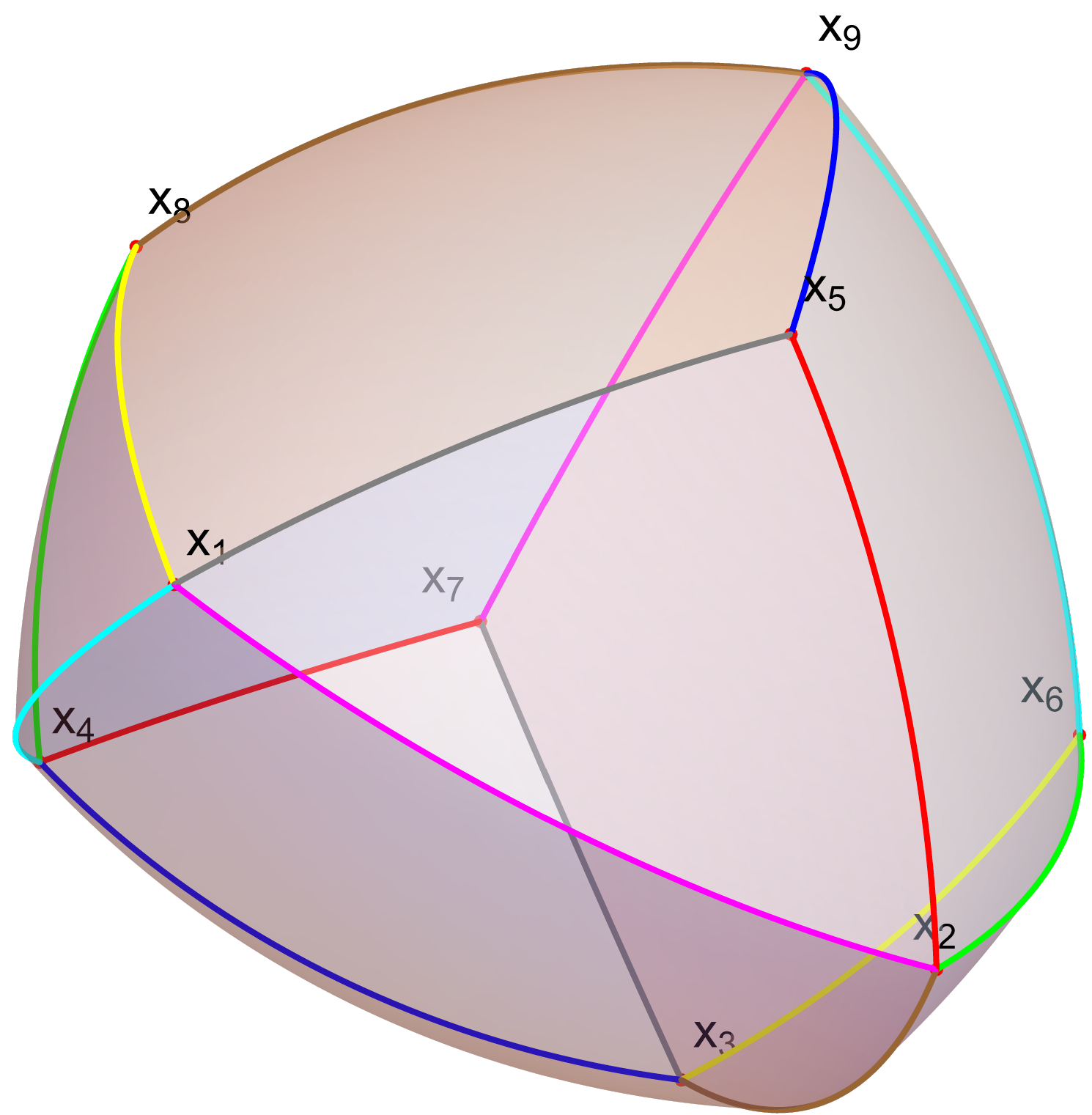} 
 \caption{Two views of the Reuleaux diminished trapezohedron $D_4$.}\label{DimTrapFig}
 \end{figure}
\begin{prop}
The set $\{x_1,\dots,x_{2n+1}\}\subset\R^3$ is extremal. 
\end{prop}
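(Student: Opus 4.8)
The goal is to show that the $2n+1$ points $\{x_1,\dots,x_{2n+1}\}$ defining the diminished trapezohedron have diameter one and exactly $2((2n+1)-1)=4n$ diametric pairs. The structure of this proof should closely parallel the proof of Proposition \ref{EPpointsProp} for elongated pyramids, so the plan is to organize the diametric pairs by the three-way split of the vertex set: the bottom polygon $\{x_1,\dots,x_n\}$, the top polygon $\{x_{n+1},\dots,x_{2n}\}$, and the apex $x_{2n+1}$. The main computational tool throughout will be the law of cosines combined with the explicit coordinates \eqref{OtherPyramidBasePoints} and the twist matrix $T$ from \eqref{Tsymm}.

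First I would handle the two polygons separately. The bottom polygon $\{x_1,\dots,x_n\}$ is a regular $n$-gon of diameter one; since $n$ is even, the diameters are realized by the $n/2$ antipodal pairs $\{x_j,x_{j+n/2}\}$, giving $n/2$ diametric pairs, and all other separations are strictly less than one. Because $x_{n+j}=Tx_j+\sin(\pi/(2n))e_3$ is a rigid motion (a rotation composed with a vertical translation) applied to the bottom polygon, the top polygon is congruent to it and likewise contributes $n/2$ diametric pairs among $\{x_{n+1},\dots,x_{2n}\}$. Next I would compute the cross separations $|x_j-x_{n+i}|^2$ between bottom and top vertices. Expanding using $|x_j|=|x_i|=1/2$ and the fact that $T$ is orthogonal, this reduces to $|x_j-x_{n+i}|^2=\tfrac12-\tfrac12\cos\!\big(\tfrac{2\pi}{n}(i-j)+\tfrac{\pi}{n}\big)+\sin^2(\pi/(2n))$, and I would show via the monotonicity of $\cos$ on $[0,\pi]$ and $[\pi,2\pi]$ (exactly as in the cited proof) that this is at most $1$, with equality attained for precisely the right number of index pairs. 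The $\pi/n$ offset from the twist $T$ is what makes each top vertex sit ``between'' two bottom vertices, so I expect each top vertex to realize the diameter against two bottom vertices, contributing $2n$ cross diametric pairs.

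Finally I would treat the apex $x_{2n+1}=(\sqrt3/2)e_3$. Here $|x_{2n+1}-x_{n+j}|^2=(\sqrt3/2-\sin(\pi/(2n)))^2+t^2$-type computation simplifies using $|x_{n+j}|$ and the chosen height $\sin(\pi/(2n))$; I anticipate the algebra is arranged so that $|x_{2n+1}-x_{n+j}|=1$ for all $j=1,\dots,n$, giving $n$ more diametric pairs, while $|x_{2n+1}-x_j|<1$ for the bottom polygon since those vertices are farther from the apex in the vertical direction. Tallying gives $n/2+n/2+2n+n=4n$ diametric pairs, and confirming every separation is $\le 1$ shows the diameter is one, so the set is extremal by the V\'azsonyi bound. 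The main obstacle will be the cross-separation count between the two polygons: I must verify carefully that the $\pi/n$ twist makes the maximum $|x_j-x_{n+i}|=1$ occur at exactly two index offsets per vertex (and not one or three), since miscounting here would change the total and break extremality; this is the one place where the choice of $T$ and the height $\sin(\pi/(2n))$ interact delicately, and it is where I would spend the most care.
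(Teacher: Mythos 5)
Your decomposition (bottom polygon, top polygon, bottom--top cross pairs, apex pairs) and your treatment of the cross pairs match the paper's proof: the paper also expands $|x_{n+j}-x_i|^2$ via the law of cosines, uses $\sin^2(\pi/(2n))=\tfrac12(1-\cos(\pi/n))$, and argues by monotonicity of cosine that each bottom vertex is at distance one from exactly two top vertices, giving $2n$ cross pairs. But your apex step is wrong, and it is a step that would fail if you carried out the computation. You claim $|x_{2n+1}-x_{n+j}|=1$ for the top polygon and $|x_{2n+1}-x_j|<1$ for the bottom polygon; the truth is exactly the reverse. Since $x_{2n+1}=(\sqrt3/2)e_3$ and the bottom vertices lie at height $0$ with horizontal radius $1/2$, one has
\begin{equation}
|x_{2n+1}-x_j|^2=\frac14+\frac34=1 \quad (j=1,\dots,n),
\end{equation}
so the apex is diametric with every \emph{bottom} vertex, while for the top polygon
\begin{equation}
|x_{2n+1}-x_{n+j}|^2=\frac14+\left(\frac{\sqrt3}{2}-\sin\left(\frac{\pi}{2n}\right)\right)^2<1,
\end{equation}
so no apex--top pair is diametric. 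Your own justification is internally inconsistent: both polygons have the same horizontal radius $1/2$, and the bottom one is \emph{vertically farther} from the apex, so bottom--apex distances strictly exceed top--apex distances; if the top--apex distances were equal to one, the bottom--apex distances would exceed one and the set would fail to have diameter one, destroying extremality.

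The likely source of the error is pattern-matching from Proposition \ref{EPpointsProp}: in the elongated pyramid the apex is diametric with the \emph{second} polygon, but in the diminished trapezohedron the height $\sqrt3/2$ is chosen precisely so that the apex sits at distance one over the \emph{first} polygon, and the small lift $\sin(\pi/(2n))$ of the twisted copy makes only the cross pairs with the bottom polygon reach distance one. Your final tally $n/2+n/2+2n+n=4n$ is numerically the same as the paper's, so the repair is only to swap the attribution of the $n$ apex pairs (and to verify the two displayed inequalities above); but as written, the claimed equality $|x_{2n+1}-x_{n+j}|=1$ is false and the proof does not go through.
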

\begin{proof}
It suffices to show $\{x_1,\dots,x_{2n+1}\}\subset\R^3$ has diameter one and that this set has $4n$ diametric pairs.  First we recall that $\{x_1,\dots, x_n\}$ has diameter one and it has $n/2$ diametric pairs. 
This is a basic fact regarding the vertices of an even sided, regular polygon of diameter one.  It is also evident that 
$|x_{2n+1}-x_j|=1$ for $j=1,\dots, n$. Therefore, $\{x_1,\dots,x_n,x_{2n+1}\}$ has diameter one with $n+n/2$ diametric 
pairs. 

\par By our remarks above, $\{x_{n+1},\dots, x_{2n}\}$ has diameter one and has $n/2$ diametric pairs.  Also note that
$$
|x_{n+j}-x_{2n+1}|^2=\frac{1}{4}+\left(\frac{\sqrt{3}}{2}-\sin\left(\frac{\pi}{2n}\right)\right)^2< 1
$$
for $j=1,\dots, n$. Moreover, 
\begin{align}
|x_{n+j}-x_{i}|^2&=\left|Tx_j-x_{i}+\sin\left(\frac{\pi}{2n}\right) e_3\right|^2\\
&=\left|Tx_j-x_{i}\right|^2+\sin\left(\frac{\pi}{2n}\right)^2\\
&=\frac{1}{4}+\frac{1}{4}-2Tx_j\cdot x_i+\sin\left(\frac{\pi}{2n}\right)^2\\
&=\frac{1}{2}-\frac{1}{2}\cos\left(\frac{\pi}{n}+\frac{\pi}{2n}j-\frac{\pi}{2n}i\right)+	\frac{1}{2}-\frac{1}{2}\cos\left(\frac{\pi}{n}\right)\\
&=1-\frac{1}{2}\left(\cos\left(\frac{\pi}{2n}(j-i+1/2)\right)+\cos\left(\frac{\pi}{n}\right)\right).
\end{align}
As we argued in the proof of Proposition \ref{EPpointsProp}, $|x_{n+j}-x_{i}|\le 1$ for all $i,j=1,\dots, n$, and 
for each $j=1,\dots, n$, there are exactly two indices $i\in \{1,\dots, n\}$ for which $|x_{n+j}-x_{i}|=1$.
Consequently, $\{x_1,\dots, x_{2n+1}\}$ has diameter one and $(n+n/2)+(n/2+2n)=4n$ diametric pairs. We conclude 
that $\{x_1,\dots,x_{2n+1}\}$ is extremal. 
\end{proof}
By exploiting the symmetry of $D_n$, it is possible to compute the perimeter and volume of this shape explicitly. However, the formulae are very long. Instead, we show some numerical approximations for the perimeter and volume in the table below. 
\begin{figure}[h]
\centering
   \includegraphics[width=.44\textwidth]{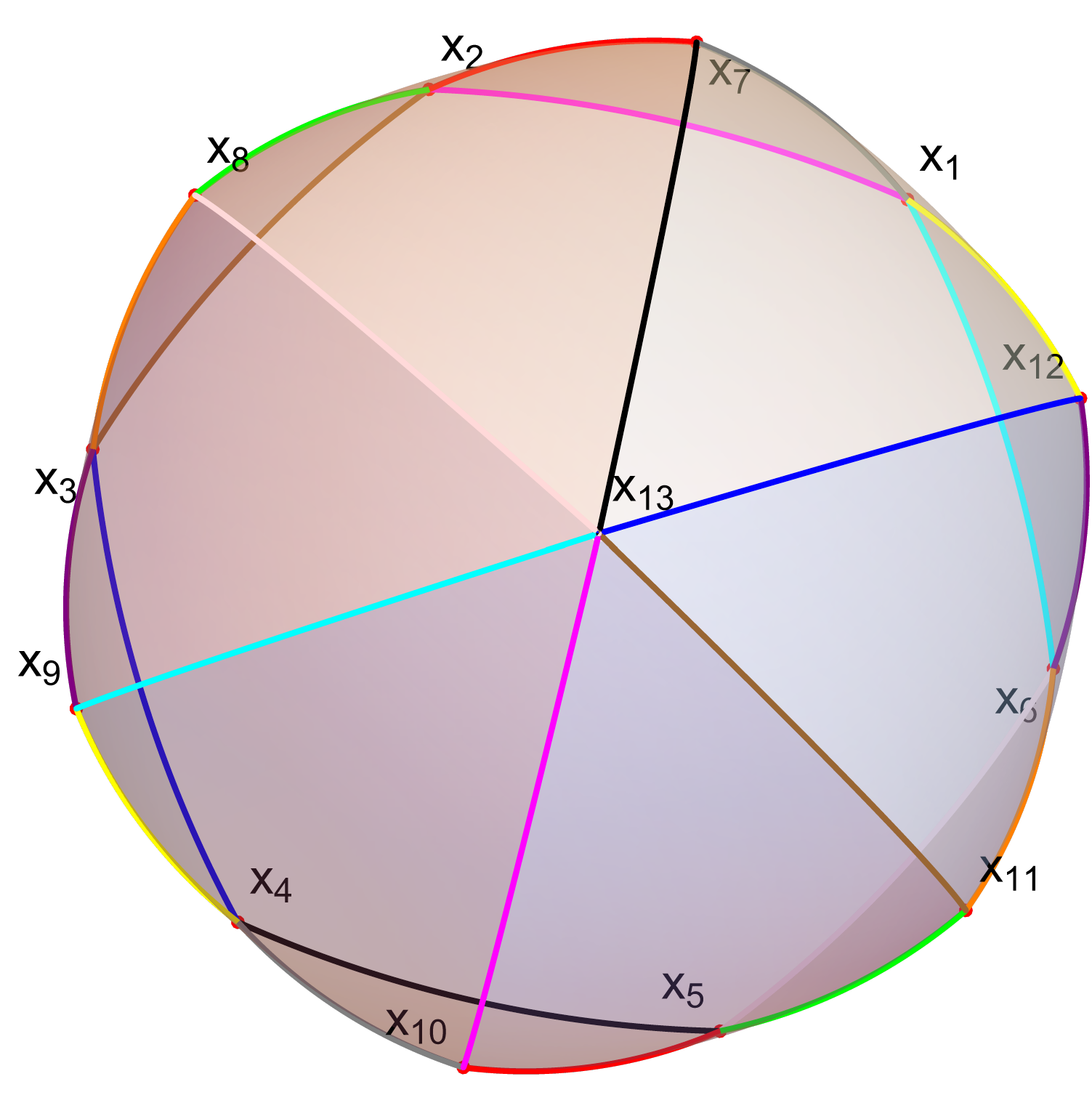} 
    \hspace{.3in}
      \includegraphics[width=.44\textwidth]{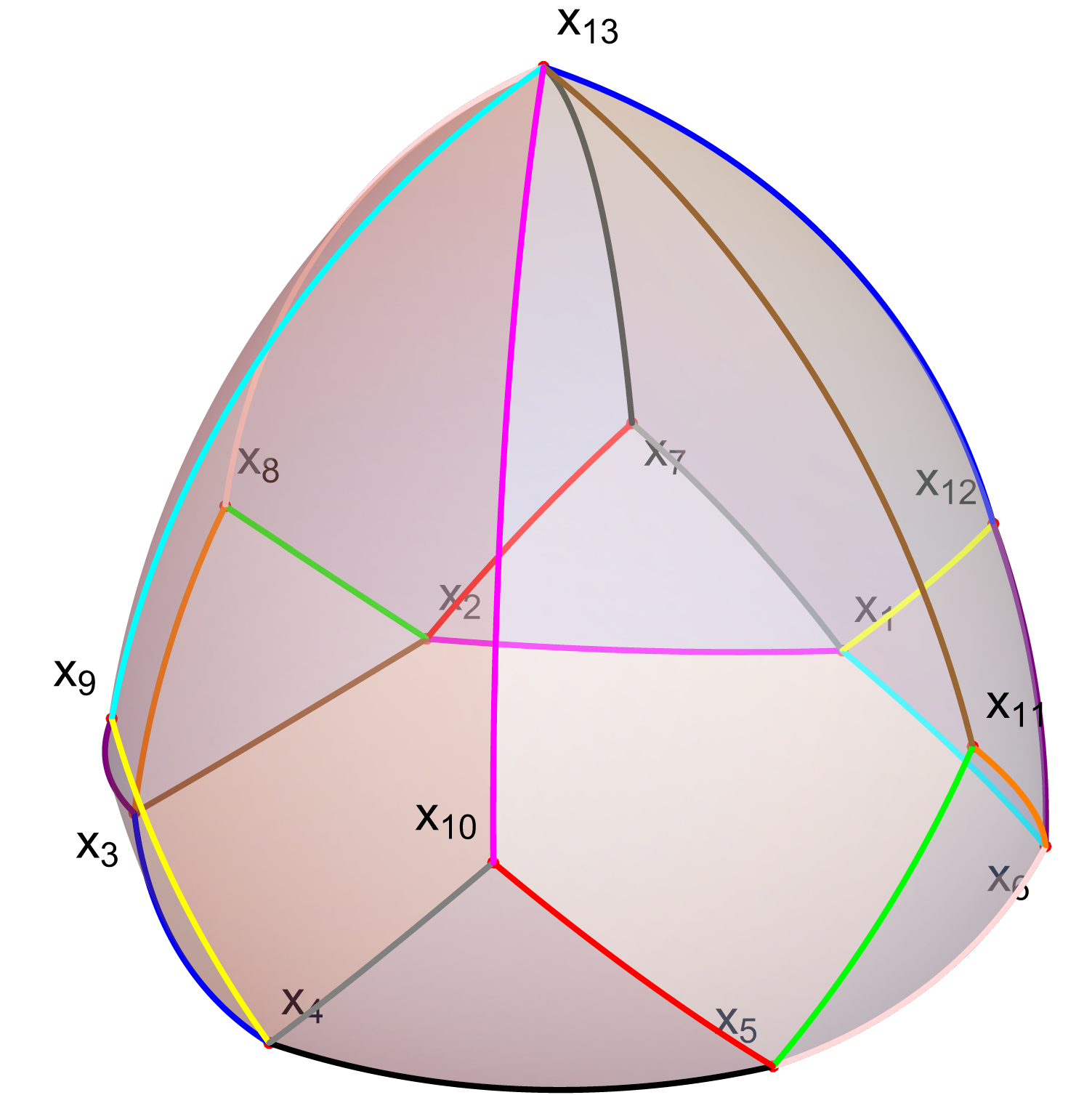} 
 \caption{Two views of the Reuleaux diminished trapezohedron $D_6$.}
\end{figure}
\begin{center}
\begin{tabular}{||c |l|} 
 \hline
$n$ & $P(D_n)$ \\
 \hline\hline
4 & 3.0260193893230074 \\
 \hline
6 & 3.017354597556886 \\
\hline
8 & 3.0097804740952094 \\
\hline 
10 & 3.0050348605116133 \\ 
\hline
12 & 3.0020158909800587 \\ 
\hline
14 & 3.0000079305708978 \\
\hline 
16 & 2.9986141631682397 \\ 
\hline
18& 2.9976106406122964 \\ 
 \hline
20 & 2.996865515987847 \\ 
 \hline
 22 & 2.9962977377798055\\
  \hline
 24 & 2.9958554883177815\\
   \hline
    26 & 2.9955044776712327\\
   \hline
\end{tabular}
\hspace{.5in}
\begin{tabular}{||c |l|} 
 \hline
$n$ & $V(D_n)$ \\
 \hline\hline
4 & 0.4633014137100808 \\
 \hline
 6& 0.45985656689727694\\
  \hline
8& 0.4565559505093948\\ 
 \hline
10 & 0.4544886558824768 \\
\hline
12 & 0.4531797978430361 \\
\hline 
14 & 0.4523131533481052 \\ 
\hline
16 & 0.45171383711833046 \\ 
\hline
18 & 0.451283641743075 \\
\hline 
20 & 0.4509650214589974 \\ 
\hline
22 & 0.4507227452212159 \\ 
 \hline
24 & 0.4505343676431973 \\ 
   \hline
   26 & 0.4503850797851724 \\ 
   \hline
\end{tabular} 
\end{center}

\section{Constant width}
A convex body $K\subset \R^3$ has {\it constant width} if the distance between any two parallel supporting planes for $K$ is equal to one. The simplest example of such a shape is a closed ball of radius one--half. However, there is an abundance of constant width shapes as discussed in the recent monograph \cite{MR3930585}.  We will consider a specific family below which can be designed from extremal sets.  In particular, we will show to compute their perimeters and volumes by combining Theorem \ref{PerThm} with a few other formulae discussed below. We will first need to recall the notion of a sliver and a spindle. 

\subsection{Slivers}
Suppose $X\subset \R^3$ is an extremal and $B(X)$ is the corresponding Reuleaux polyhedron with a dual edge pair $(e,e')$. Let us denote the endpoints of $e$ as $b,c\in X$ and the endpoints of $e'$ as $b',c'\in X$.  The geodesic $\gamma\subset \partial B(b')$ joining $b$ and $c$ also belongs to $\partial B(b')\cap B(X)$ by Proposition 4.2 of \cite{MR2593321} or Lemma 2.10 of \cite{HyndDensity}. The subset of the face $\partial B(b')\cap B(X)$ bounded by $\gamma$ and $e$ is the {\it sliver} in the face opposite $b'$ near the edge $e$.  See Figure \ref{mySliverFig}.  We will write $\text{Sl}(X,b',e)$ for this region. 
\begin{figure}[h]
\centering
   \includegraphics[width=.6\textwidth]{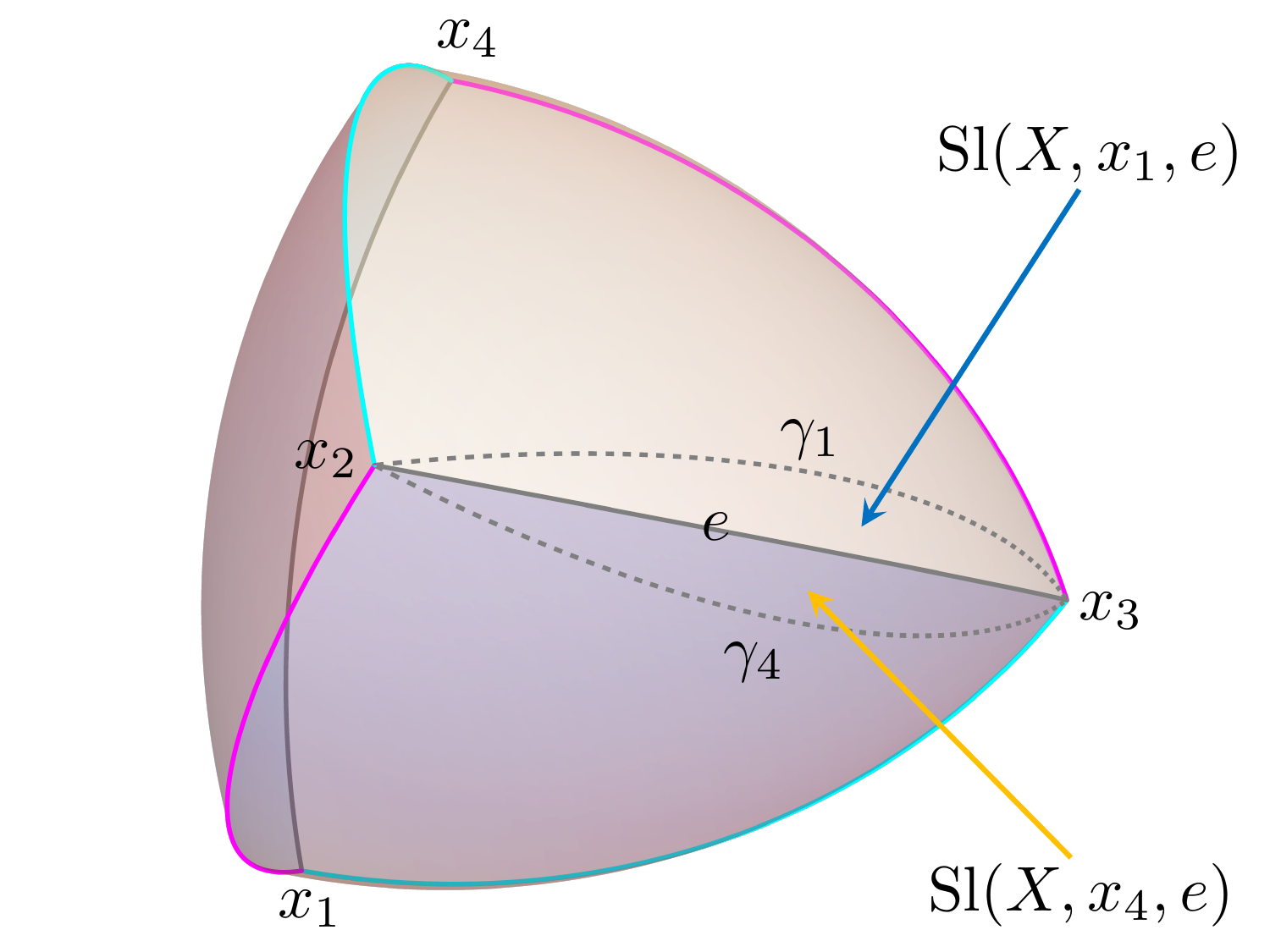} 
 \caption{A Reuleaux tetrahedron $B(X)$ with edge $e$ and geodesics $\gamma_1\subset \partial B(x_1)$ and
 $\gamma_4\subset \partial B(x_4)$ joining $x_2$ and $x_3$. The region of  the face $\partial B(x_1)\cap B(X)$ bounded by $\gamma_1$ and $e$ is the sliver in the face opposite $x_1$ near the edge $e$. We denote this sliver by $\text{Sl}(X,x_1,e)$. Likewise the region of  the face $\partial B(x_4)\cap B(X)$ bounded by $\gamma_4$ and $e$ is the sliver $\text{Sl}(X,x_4,e)$. }\label{mySliverFig}
\end{figure}
\par It will be useful for us to know the surface area of a sliver. Fortunately, we can use the Gauss--Bonnet formula to figure this out.
\begin{lem}
With the above notation,
\be\label{SliverArea}
\sigma(\textup{Sl}(X,b',e))=2\cos^{-1}\left(\sqrt{1-\left|\frac{b'-c'}{2}\right|^2}\frac{\sin\varphi}{\sin\phi}\right)-\left|\frac{c'-b'}{2}\right|\varphi.
\ee
Here 
$$
\varphi=\angle\left( b-(b'+c')/2,c-(b'+c')/2\right)
$$
and 
$$
\phi=\angle\left( b-b',c-b' \right).
$$
\end{lem}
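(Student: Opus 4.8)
The plan is to apply the Gauss--Bonnet theorem to the sliver $S:=\textup{Sl}(X,b',e)$, regarded as a region on the unit sphere $\partial B(b')$. Its boundary $\partial S$ consists of two arcs meeting at the corners $b$ and $c$: the geodesic $\gamma\subset\partial B(b')$ and the edge $e\subset\partial B(b')\cap\partial B(c')$. Since the Gaussian curvature of $\partial B(b')$ is $K\equiv 1$, the same reduction used in Section \ref{PerSect} gives
\[
\sigma(S)=2\pi-\int_{\partial S}k_g\,ds-(\theta_b+\theta_c),
\]
where $\theta_b,\theta_c$ are the exterior (turning) angles of $S$ at $b$ and $c$.

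The geodesic curvature integral is handled exactly as in the proof of Theorem \ref{PerThm}. Along $\gamma$ we have $k_g\equiv 0$, so this arc contributes nothing. Along $e$, recall from \eqref{EdgeCircle} that $e$ is an arc of the circle of radius $r=\sqrt{1-\left|(b'-c')/2\right|^2}$ centered at $(b'+c')/2$, and that $b,c$ subtend the angle $\varphi$ at that center; hence $e$ has length $r\varphi$ and constant geodesic curvature $k_g=\sqrt{1-r^2}/r$. Therefore
\[
\int_{\partial S}k_g\,ds=\int_e k_g\,ds=\frac{\sqrt{1-r^2}}{r}\,r\varphi=\left|\frac{c'-b'}{2}\right|\varphi,
\]
which already supplies the second term of \eqref{SliverArea}.

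The main task is then to evaluate the exterior angles. Because each of $b'$ and $c'$ is at unit distance from both $b$ and $c$, the perpendicular bisecting plane of the segment $bc$ contains $b'$ and $c'$; the reflection across this plane fixes $\partial B(b')$ and $\partial B(c')$, interchanges $b$ and $c$, and hence maps $S$ to itself. Consequently $\theta_b=\theta_c=\pi-\alpha$, where $\alpha$ is the interior angle between $\gamma$ and $e$ at $b$. To find $\alpha$ I would use that the tangent to $\gamma$ at $b$ is the component of $c-b'$ orthogonal to $b-b'$, of length $\sin\phi$ (since $(b-b')\cdot(c-b')=\cos\phi$ as $|b-b'|=|c-b'|=1$), while the tangent to $e$ at $b$ is orthogonal both to $b-b'$ and to $c'-b'$, hence parallel to $(b-b')\times(c'-b')$. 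A short triple-product computation then yields
\[
\cos\alpha=\sqrt{1-\left|\tfrac{b'-c'}{2}\right|^2}\,\frac{\sin\varphi}{\sin\phi},
\]
and the same value occurs at $c$. (Equivalently one may apply the spherical law of sines to the isosceles triangle with vertices $b$, $c$ and the pole $(c'-b')/|c'-b'|$ of the circle containing $e$, using that the tangent to $e$ is orthogonal to the radius through $b$.)

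Combining the three ingredients gives
\[
\sigma(S)=2\pi-\left|\frac{c'-b'}{2}\right|\varphi-2(\pi-\alpha)=2\alpha-\left|\frac{c'-b'}{2}\right|\varphi,
\]
which is exactly \eqref{SliverArea}. The only genuinely delicate point is the evaluation of $\alpha$: one must track orientations so that the angle between the geodesic and the outward-bulging circular edge emerges as the stated $\cos^{-1}$ rather than its supplement, and confirm that the two corners contribute equally. Everything else is a direct transcription of the Gauss--Bonnet bookkeeping already carried out for the faces of $B(X)$.
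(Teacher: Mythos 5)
Your proposal is correct and is essentially the paper's own argument: Gauss--Bonnet applied to the sliver, with $k_g\equiv 0$ along the geodesic $\gamma$ and $\int_e k_g\,ds=\left|\frac{c'-b'}{2}\right|\varphi$ along the circular edge, so that $\sigma(\textup{Sl}(X,b',e))=2\alpha-\left|\frac{c'-b'}{2}\right|\varphi$ once both interior angles are identified as $\alpha=\cos^{-1}\bigl(\sqrt{1-\left|(b'-c')/2\right|^2}\,\sin\varphi/\sin\phi\bigr)$. The only difference is cosmetic: the paper normalizes coordinates and computes both corner angles explicitly from the parametrizations $\eta$ and $\zeta$, whereas you deduce their equality from the reflection symmetry across the perpendicular bisector plane of $bc$ and evaluate just one angle by a tangent-vector (triple-product) computation, which indeed yields the same value.
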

\begin{proof}
We may assume without any loss of generality that 
\be
\begin{cases}
b=\sqrt{1-a^2}e_1\\
c=\sqrt{1-a^2}(\cos\varphi,\sin\varphi,0)\\
b'=-ae_3\\
c'=ae_3
\end{cases}
\ee
for $a=|b'-c'|/2$. Indeed, these coordinates can be obtained after an appropriate rotation and translation. Then 
the edge $e$ between $b$ and $c$ may be parametrized conveniently as 
$$
\eta(t)=\sqrt{1-a^2}(\cos t,\sin t,0)
$$
for $0\le t\le \varphi$. Similarly, 
we parametrize the geodesic $\gamma\subset \partial B(b')$ joining $b$ and $c$ via
\begin{align}
\zeta(t)&=b'+(b-b')\cos t+\frac{c-b'-\cos\phi (b-b')}{\sin\phi}\sin t\\
&=ae_3+(\sqrt{1-a^2}e_1+ae_3)\cos t+ \\
&\quad \quad \quad \frac{(\sqrt{1-a^2}(\cos\varphi,\sin\varphi,0)+ae_3)-\cos\phi (\sqrt{1-a^2}e_1+ae_3)}{\sin\phi}\sin t
\end{align}
for $0\le t\le \phi$. 

\par Applying the Gauss--Bonnet theorem as we did in our proof of Theorem \ref{PerThm} gives
\begin{align}
\sigma(\textup{Sl}(X,b',e))&=2\pi-\left(\int_{e}k_gds+\theta_e\right)-\left(\int_{\gamma}k_gds+\theta_\gamma\right)\\
&=(\pi-\theta_e)+(\pi-\theta_\gamma)-\int_{e}k_gds\\
&=(\pi-\theta_e)+(\pi-\theta_\gamma)-a\varphi.
\end{align}
Here we used that $k_g\equiv 0$ on $\gamma$. We also have
\begin{align}
\pi-\theta_\gamma&=\angle\left( \dot\eta(0) ,\dot\zeta(0) \right)\\
&=\angle\left(e_2,(\sqrt{1-a^2}(\cos\varphi,\sin\varphi,0)+ae_3)-\cos\phi (\sqrt{1-a^2}e_1+ae_3)\right)\\
&=\cos^{-1}\left(\sqrt{1-a^2}\frac{\sin\varphi}{\sin\phi}\right)
\end{align}
and
\begin{align}
\pi-\theta_e&=\angle\left(\dot\eta(\varphi), \dot\zeta(\phi) \right)\\
&=\angle\left((-\sin\varphi,\cos\varphi,0) ,(\sqrt{1-a^2}(\cos\varphi,\sin\varphi,0)+ae_3)\cos\phi -(\sqrt{1-a^2}e_1+ae_3)\right)\\
&=\cos^{-1}\left(\sqrt{1-a^2}\frac{\sin\varphi}{\sin\phi}\right).
\end{align}
As a result, 
\begin{align}
\sigma(\textup{Sl}(X,b',e))
&=2\cos^{-1}\left(\sqrt{1-a^2}\frac{\sin\varphi}{\sin\phi}\right)-a\varphi\\
&=2\cos^{-1}\left(\sqrt{1-\left|\frac{b'-c'}{2}\right|^2}\frac{\sin\varphi}{\sin\phi}\right)-\left|\frac{c'-b'}{2}\right|\varphi.
\end{align}
\end{proof}
\begin{cor}\label{SlivCor}
With the above notation, 
$$
\sigma(\textup{Sl}(X,b',e))=\sigma(\textup{Sl}(X,c',e)).
$$
\end{cor}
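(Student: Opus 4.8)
The plan is to read both sliver areas off the formula \eqref{SliverArea} and exploit its symmetry under interchanging $b'$ and $c'$. The first thing I would record is a geometric fact forced by duality: because $e$ is dual to $e'$, the edge $e$ lies on $\partial B(b')\cap\partial B(c')$ (this is exactly the duality relation recalled in the introduction, since $e'$ has endpoints $b',c'$). Consequently each endpoint of $e$ lies on both $\partial B(b')$ and $\partial B(c')$, so all four cross-pairs are diametric:
$$
|b-b'|=|c-b'|=|b-c'|=|c-c'|=1.
$$

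Next I would apply \eqref{SliverArea} twice. To obtain $\sigma(\textup{Sl}(X,c',e))$ one simply interchanges the roles of $b'$ and $c'$ in the lemma. The quantity $\left|\tfrac{c'-b'}{2}\right|$ is unchanged, and the angle $\varphi=\angle\!\left(b-(b'+c')/2,\,c-(b'+c')/2\right)$ is also unchanged because it is computed from $b$, $c$ and the midpoint $(b'+c')/2=(c'+b')/2$, which is symmetric in $b',c'$. The only quantity that is affected is the angle appearing in the denominator: $\phi=\angle(b-b',c-b')$ is replaced by $\phi'':=\angle(b-c',c-c')$. Hence the sole term that could distinguish $\sigma(\textup{Sl}(X,b',e))$ from $\sigma(\textup{Sl}(X,c',e))$ is the factor $\sin\phi$ versus $\sin\phi''$ inside the inverse cosine, and the corollary reduces to the claim $\sin\phi=\sin\phi''$.

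This last claim is the crux, though it is short, and I in fact expect the stronger equality $\phi=\phi''$. Since $|b-b'|=|c-b'|=1$, the triangle with vertices $b,c,b'$ is isosceles with apex at $b'$, so the law of cosines gives $\cos\phi=1-\tfrac12|b-c|^2$. The triangle with vertices $b,c,c'$ is likewise isosceles with apex at $c'$ and the same base $|b-c|$, whence $\cos\phi''=1-\tfrac12|b-c|^2$ as well. As $\phi,\phi''\in[0,\pi]$, we conclude $\phi=\phi''$ and in particular $\sin\phi=\sin\phi''$; substituting back into \eqref{SliverArea} yields the asserted equality. The only genuine subtlety is the opening observation that all four cross-distances equal one; everything after that is a one-line isosceles-triangle computation. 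As a sanity check I would verify this against the explicit coordinates used in the lemma, where $b=\sqrt{1-a^2}e_1$, $c=\sqrt{1-a^2}(\cos\varphi,\sin\varphi,0)$, $b'=-ae_3$, $c'=ae_3$: there one computes directly $(b-b')\cdot(c-b')=(1-a^2)\cos\varphi+a^2=(b-c')\cdot(c-c')$, confirming $\cos\phi=\cos\phi''$.
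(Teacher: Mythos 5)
Your proof is correct and takes essentially the same route as the paper: both reduce the corollary via \eqref{SliverArea} to showing $\angle(b-b',c-b')=\angle(b-c',c-c')$, and both obtain this from the four unit distances $|b-b'|=|b-c'|=|c-b'|=|c-c'|=1$ together with the common base $|b-c|$ (your law-of-cosines step is the same computation as the paper's expansion of $|b-c|^2$ into dot products). The only cosmetic difference is that you spell out the symmetry of $\varphi$ and $|b'-c'|/2$ under swapping $b'$ and $c'$, which the paper leaves implicit.
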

\begin{proof}
As $|b-b'|=|b-c'|=|c-b'|=|c-c'|=1,$
\begin{align}
|b-c|^2&=|b-b'-(c-b')|^2\\
&=|b-b'|^2+|c-b'|^2-2(b-b')\cdot (c-b')\\
&=2-2(b-b')\cdot (c-b')
\end{align}
and 
\begin{align}
|b-c|^2&=|b-c'-(c-c')|^2\\
&=|b-c'|^2+|c-c'|^2-2(b-c')\cdot (c-c')\\
&=2-2(b-c')\cdot (c-c').
\end{align}
It follows that $(b-b')\cdot (c-b')= (b-c')\cdot(c-c')$ and in turn
$$
\angle\left( b-b',c-b' \right)=\angle\left( b-c',c-c' \right).
$$
In view of \eqref{SliverArea}, the value of $\phi$ is the same for $\sigma(\textup{Sl}(X,b',e))$ and $\sigma(\textup{Sl}(X,c',e))$. We conclude $\sigma(\textup{Sl}(X,b',e))=\sigma(\textup{Sl}(X,c',e)).$
\end{proof}
\subsection{Spindles} 
We will once again consider an extremal $X\subset \R^3$ and a dual edge pair $(e,e')$ of the corresponding Reuleaux polyhedron $B(X)$. Let us write $b,c\in X$ for the endpoints of $e$ and $b',c'\in X$  for the endpoints of $e'$. There are two geodesic curves $\gamma_{b'}\subset \partial B(b')$ and $\gamma_{c'}\subset \partial B(c')$ which join $b$ and $c$ in their respective spheres. As mentioned above, $\gamma_{b'}\subset \partial B(b')\cap B(X)$ and $\gamma_{c'}\subset \partial B(c')\cap B(X)$.  Note specifically that $\gamma_{b'}$ can be rotated into $\gamma_{c'}$ by a rigid motion of $\R^3$ that fixes the line between $b$ and $c$.  

\par If we remove $\textup{Sl}(X,b',e)$ from $\partial B(b')\cap B(X)$ and $\textup{Sl}(X,c',e)$ from $\partial B(c')\cap B(X)$ this leaves a void in $\partial B(X)$. We define $\text{Sp}(X,e)$ as the surface obtained by rotating $\gamma_{b'}$ into $\gamma_{c'}$ which fills this void.  The shape $\text{Sp}(X,e)$ is sometimes called a {\it spindle surface} as it is a portion of spindle torus.  We compute the surface area of this spindle portion below.  
\begin{figure}[h]
\centering
      \includegraphics[width=.6\textwidth]{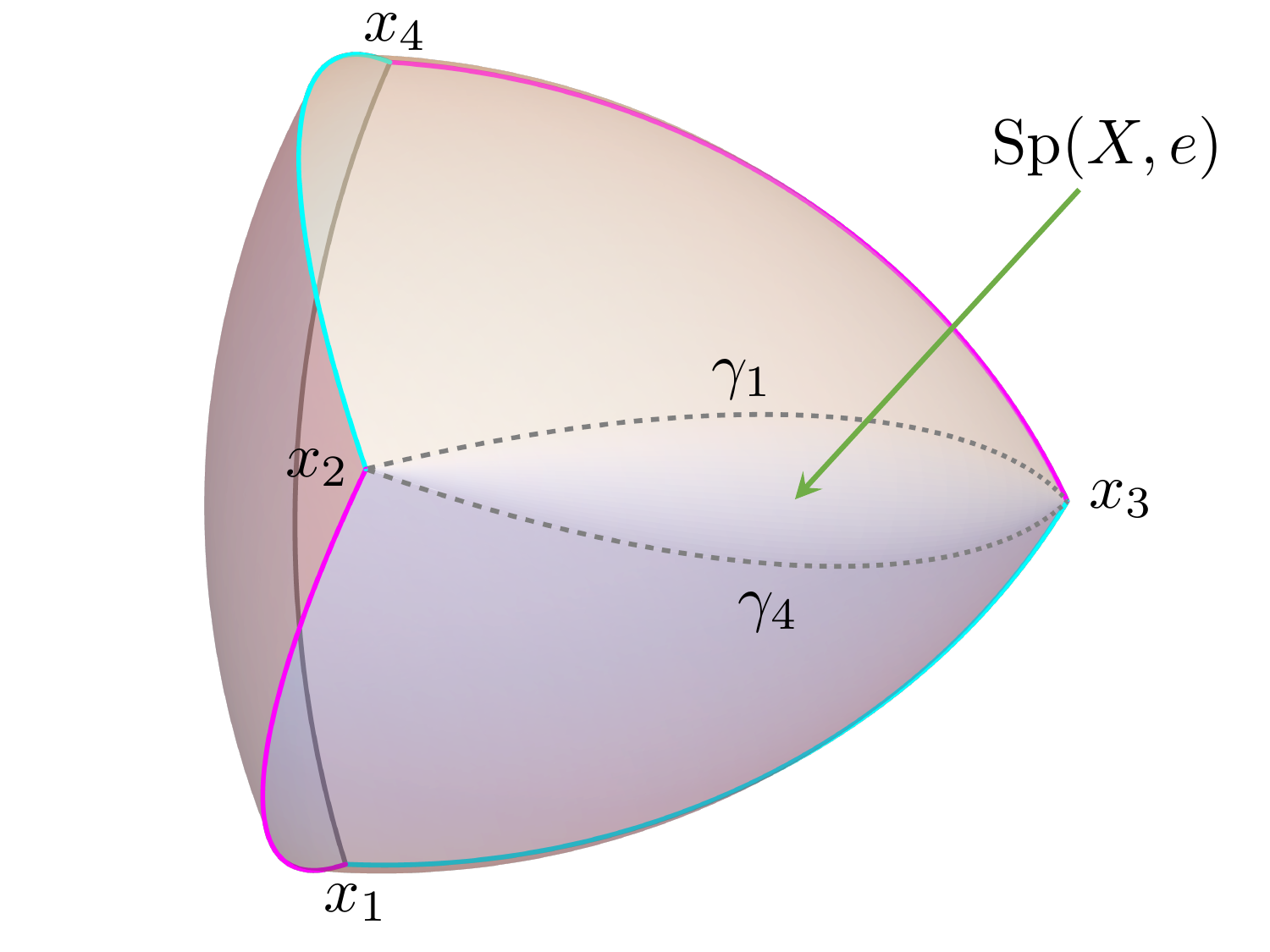} 
 \caption{This is the boundary of a Reuleaux tetrahedron $B(X)$ with the slivers $\textup{Sl}(X,x_1,e)$ and $\textup{Sl}(X,x_4,e)$ replaced by the spindle surface $\text{Sp}(X,e)$; here $e$ is the edge of $B(X)$ which joins $x_2$ and $x_3$.  Compare with Figure \ref{mySliverFig}. Note in particular that $\text{Sp}(X,e)$ is the surface obtained by rotating the geodesic $\gamma_1$ which joins $x_2$ and $x_3$ in $\partial B(x_1)$ into the geodesic $\gamma_4$ which joins $x_2$ and $x_3$ in $\partial B(x_4)$ about the line which passes through $x_2$ and $x_3$.}\label{Surgery}
 \end{figure}

\begin{lem}\label{SpindleLemma}
With the above notation,
$$
\sigma(\textup{Sp}(X,e))=2\psi \left(-\sqrt{1-\left|(b-c)/2\right|^2}\sin^{-1}\left(|b-c|/2\right)+|b-c|/2\right).
$$
Here $\psi=\angle\left( b'-(b+c)/2,c'-(b+c)/2\right)$.
\end{lem}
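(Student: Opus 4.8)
The plan is to recognize $\textup{Sp}(X,e)$ as a surface of revolution and to compute its area by a Pappus-type integral, in the same spirit as the Gauss--Bonnet calculation of the sliver area. First I would fix coordinates adapted to the axis of rotation. Since $\textup{Sp}(X,e)$ is generated by rotating $\gamma_{b'}$ about the line through $b$ and $c$, I would place this line along the $e_3$-axis with the midpoint $(b+c)/2$ at the origin, so that $b=\tfrac{|b-c|}{2}e_3$ and $c=-\tfrac{|b-c|}{2}e_3$. Writing $\rho=|b-c|/2$, the relations $|b-b'|=|c-b'|=|b-c'|=|c-c'|=1$ (which hold because $e\subset\partial B(b')\cap\partial B(c')$ by edge duality) force both $b'$ and $c'$ onto the circle of radius $\sqrt{1-\rho^2}$ in the plane $\{x\cdot e_3=0\}$ bisecting $b$ and $c$. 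In these coordinates $\psi=\angle(b'-(b+c)/2,c'-(b+c)/2)$ is exactly the azimuthal angle between $b'$ and $c'$ about the $e_3$-axis.

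Next I would make the surface-of-revolution structure explicit. The generating curve $\gamma_{b'}$ is the unit-radius great-circle arc on $\partial B(b')$ joining $b$ and $c$, lying in the vertical half-plane spanned by $e_3$ and the horizontal direction of $b'$. By the remark preceding the lemma, carrying $\gamma_{b'}$ into $\gamma_{c'}$ is a rigid motion fixing the line through $b$ and $c$, hence a rotation about the $e_3$-axis; since the vertical planes containing $\gamma_{b'}$ and $\gamma_{c'}$ meet at the azimuthal angle $\psi$, this rotation sweeps $\gamma_{b'}$ through an angular interval of length $\psi$. Writing $u$ for the signed horizontal coordinate along the direction of $b'$ and $z=x\cdot e_3$, the great circle of $\gamma_{b'}$ is
$$
u(\theta)=\sqrt{1-\rho^2}+\cos\theta,\qquad z(\theta)=\sin\theta ,
$$
with endpoints $b,c$ at $\theta=\pi\mp\sin^{-1}\rho$. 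Thus $\gamma_{b'}$ is the minor arc $\theta\in[\pi-\sin^{-1}\rho,\pi+\sin^{-1}\rho]$, on which $u(\theta)\le 0$, so the distance from the axis is $r(\theta)=|u(\theta)|=-\sqrt{1-\rho^2}-\cos\theta$.

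I would then invoke the area element $r\,d\phi\,ds$ of a surface of revolution, with $\phi$ the azimuth and $ds=d\theta$ the arclength along the unit-radius generator. Integrating $\phi$ over an interval of length $\psi$ yields
$$
\sigma(\textup{Sp}(X,e))=\psi\int_{\pi-\sin^{-1}\rho}^{\pi+\sin^{-1}\rho}\bigl(-\sqrt{1-\rho^2}-\cos\theta\bigr)\,d\theta
=2\psi\left(-\sqrt{1-\rho^2}\,\sin^{-1}\rho+\rho\right),
$$
which is the asserted formula once $\rho$ is rewritten as $|b-c|/2$ and $\sqrt{1-\rho^2}$ as $\sqrt{1-|(b-c)/2|^2}$.

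I expect the genuine difficulty to be geometric bookkeeping rather than analysis: confirming that the generator is the \emph{minor} great-circle arc, so that it dips across the axis and $r=|u|=-u$ holds on the entire arc, and verifying that the azimuthal angle swept in taking $\gamma_{b'}$ to $\gamma_{c'}$ is exactly $\psi$ and not its supplement. The remaining integral is elementary. As a consistency check, for a Reuleaux tetrahedron one has $\rho=1/2$ and $\psi=\cos^{-1}(1/3)$, values compatible with the constant-width surgery illustrated in Figure \ref{Surgery}.
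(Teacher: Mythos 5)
Your proof is correct and takes essentially the same approach as the paper's: both place the line through $b$ and $c$ along the $e_3$-axis, identify $\textup{Sp}(X,e)$ as the surface swept out by the unit-radius great-circle arc (centered across the axis at distance $\sqrt{1-|(b-c)/2|^2}$) rotated through the azimuthal angle $\psi$, and evaluate the resulting surface-of-revolution area. The paper merely states the outcome of this ``direct computation,'' whereas you write out the $r\,d\phi\,ds$ integral and the endpoint bookkeeping explicitly, which fills in the only details the paper leaves implicit.
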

\begin{proof}
Without any loss of generality, we may assume 
\be
\begin{cases}
b=ae_3\\
c=-ae_3\\
b'=\sqrt{1-a^2}(-\cos\psi,-\sin\psi,0)\\
c'=-\sqrt{1-a^2}e_1
\end{cases}
\ee
for $a=|b-c|/2$. In this case, $\textup{Sp}(X,e)$ is obtained by rotating the arc 
$$
(x_1+\sqrt{1-a^2})^2+x_3^2=1,\quad  x_1\ge 0
$$
in the $x_1x_3$--plane counterclockwise $\psi$ units about the $x_3$--axis.  Direct computation of this surface area yields
\begin{align}
\sigma(\textup{Sp}(X,e))&=2\psi \left(-\sqrt{1-a^2}\sin^{-1}(a)+a\right)\\
&=2\psi \left(-\sqrt{1-\left|(b-c)/2\right|^2}\sin^{-1}\left(|b-c|/2\right)+|b-c|/2\right).
\end{align}
\end{proof}
\subsection{Meissner polyhedra}
Suppose that $X$ is extremal, $(e,e')$ is a dual edge pair of $B(X)$, $b,c\in X$ are the endpoints of $e$, and $b',c'\in X$ are the endpoints of $e'$.  We will {\it smooth} edge $e$ of $ B(X)$ by replacing the slivers $\text{Sl}(X,b',e)$ and $\text{Sl}(X,c',e)$ with the spindle surface $\textup{Sp}(X,e)$; see Figure \ref{Surgery}.  If we smooth one edge in each dual edge pair of $B(X)$, we obtain a surface which bounds a constant width body. This was verified by Montejano and Roldan-Pensado in \cite{MR3620844}; see also section 4 of \cite{HyndDensity}. Such a shape is known as a {\it Meissner polyhedron} based on $X$.
\begin{figure}[h]
\centering
   \includegraphics[width=.45\textwidth]{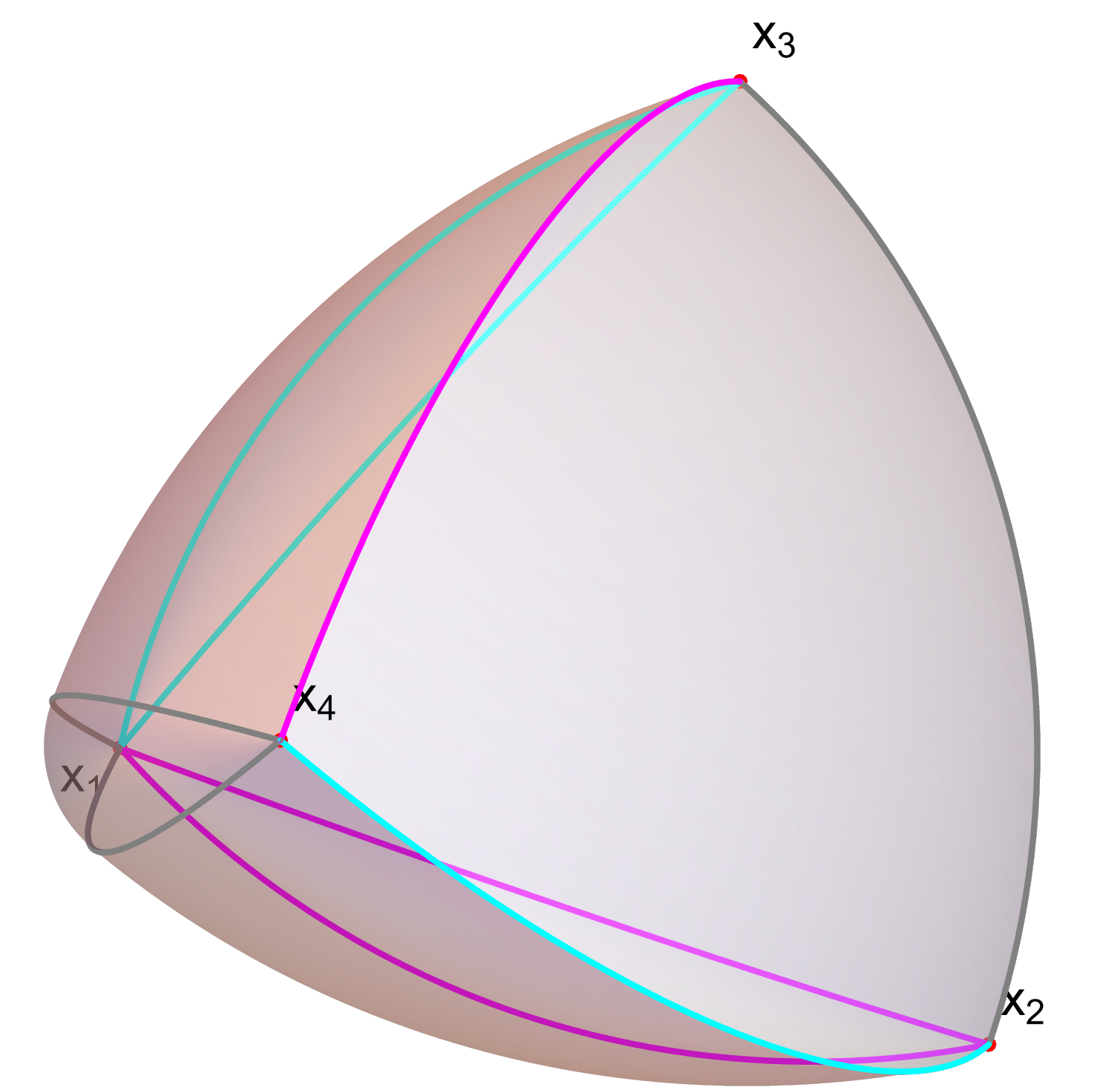} 
    \hspace{.3in}
      \includegraphics[width=.44\textwidth]{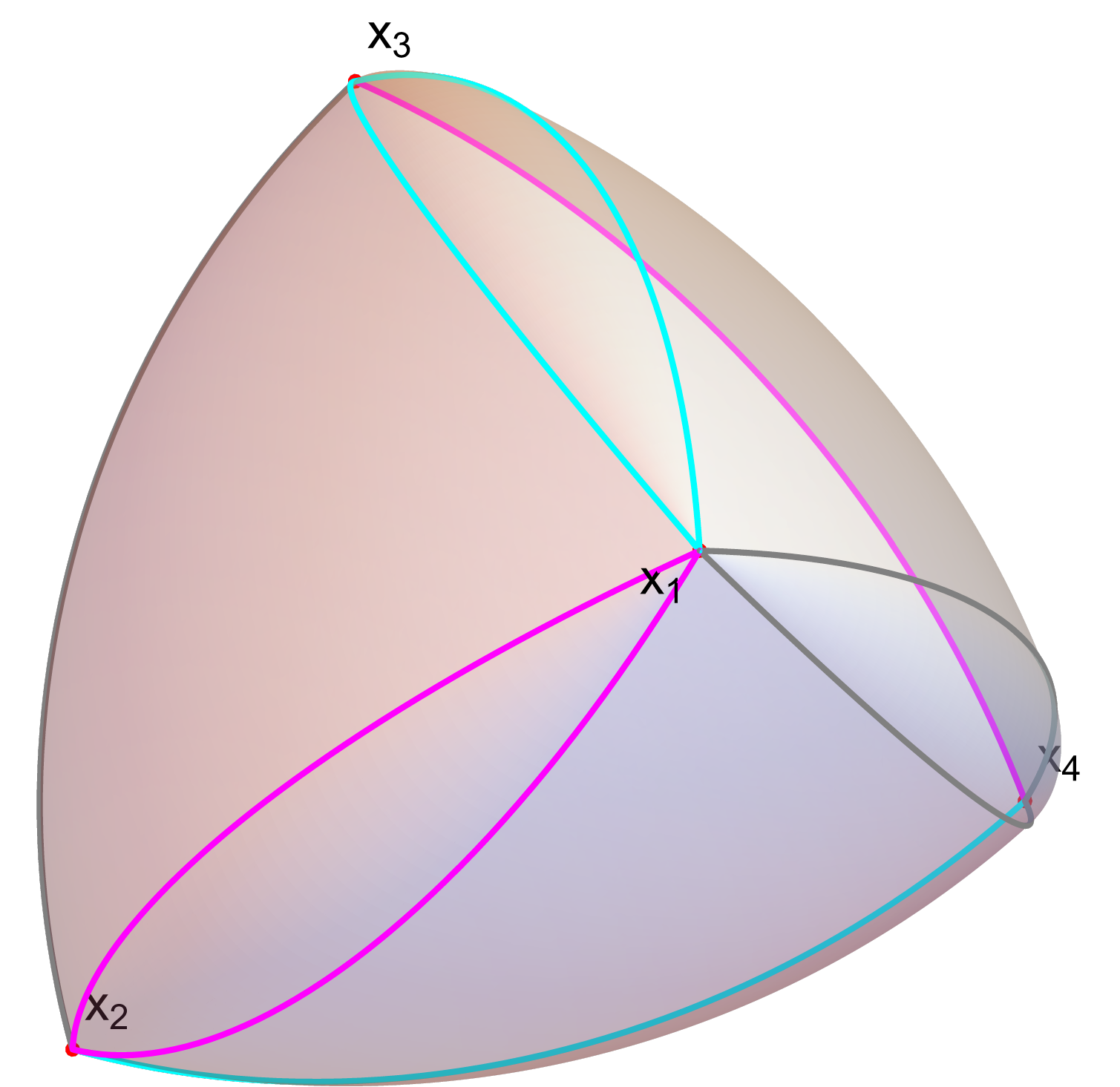} 
 \caption{
 A Meissner tetrahedron such that edges which join $x_1$ and $x_2$, $x_1$ and $x_3$, and $x_1$ and $x_4$ have been smoothed. Note that instead of indicating dual edge pairs with the same color, we have indicated the geodesics which bound the spindle surface with the same color of the edge it smooths.}\label{MessnerTetra1}
 \end{figure}
\par The simplest Meissner polyhedra are the two Meissner tetrahedra. These are Meissner polyhedra based on the vertices of a regular tetrahedron in $\R^3$ of side length one. The two types of shapes one obtains are displayed in Figures \ref{MessnerTetra1} and \ref{MessnerTetra2}. 
 \begin{figure}[h]
\centering
   \includegraphics[width=.44\textwidth]{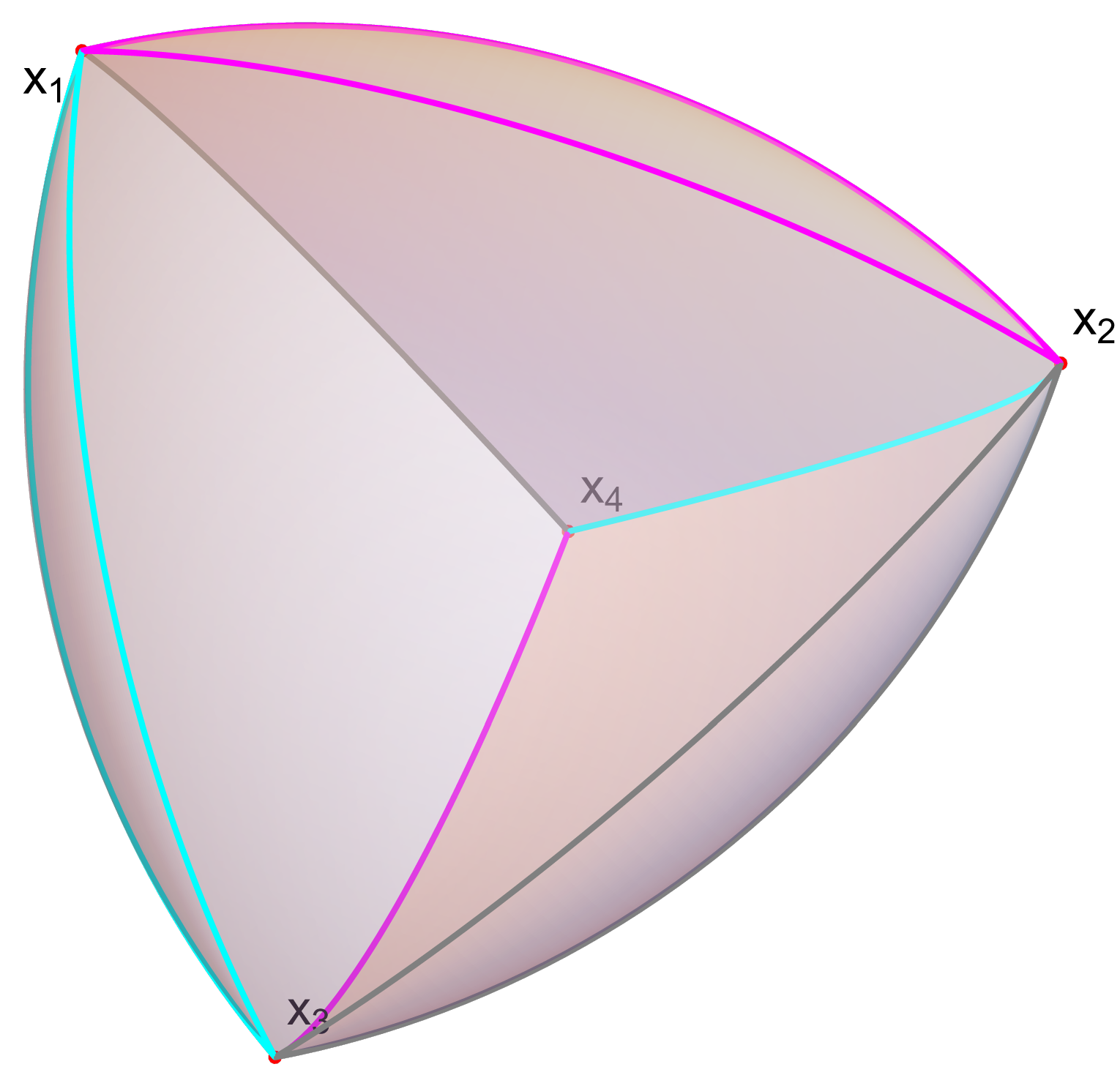} 
    \hspace{.3in}
      \includegraphics[width=.44\textwidth]{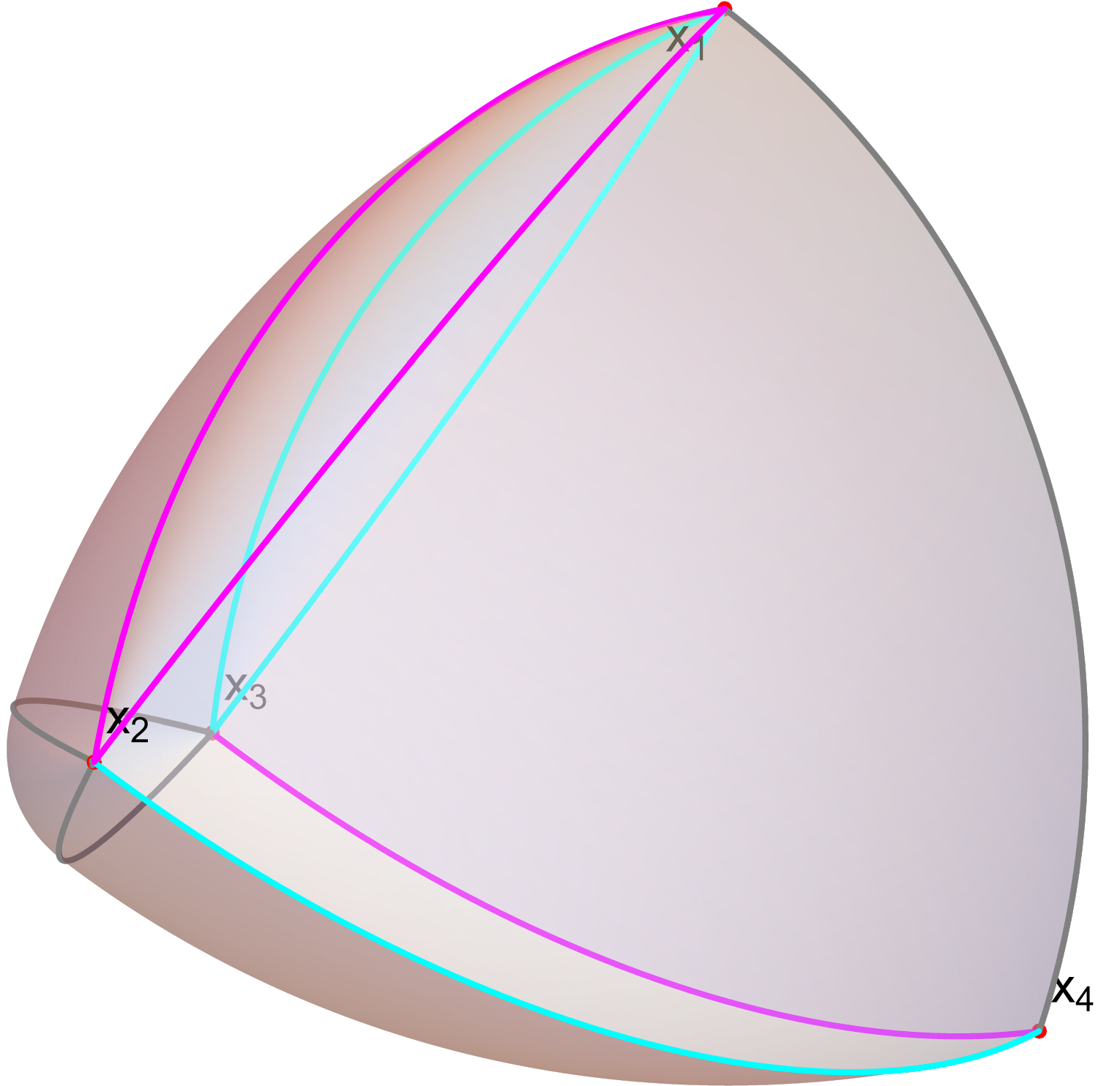} 
 \caption{A Meissner tetrahedron in which three smoothed edges share a common face. }\label{MessnerTetra2}
 \end{figure}
Meissner polyhedra are of special interest as they are known to be dense among all constant width shape \cite{HyndDensity}.  The two Meissner tetrahedra are also conjectured to enclose least volume among all constant width shapes \cite{MR2844102, MR1316393}.

\par Employing our computations above, we can approximate the perimeter of any Meissner polyhedron $M\subset \R^3$. Indeed, if $B(X)$ is a Reuleaux polyhedron with dual edge pairs 
$$
(e_1,e_1'),\dots, (e_{m-1},e_{m-1}'),
$$
and $M$ is the Meissner polyhedron obtained by smoothing edges $e_1,\dots, e_{m-1}$, then
\be\label{SurgeryFormula}
P(M)=P(B(X))+\sum^{m-1}_{j=1}\left[\sigma(\textup{Sp}(X,e_j))-
2\sigma(\textup{Sl}(X,b_j',e_j))\right].
\ee
Here $b_j',c_j'\in X$ are the endpoints of $e_j'$ and we used $\sigma(\textup{Sl}(X,b_j',e_j))=\sigma(\textup{Sl}(X,c_j',e_j))$, which was verified in Corollary \ref{SlivCor}. Rapid computation of this perimeter is possible 
by implementing the formulae we derived in Theorem \ref{PerThm}, Lemma \ref{SliverArea}, and  Lemma \ref{SpindleLemma}. We also emphasize that this computation has been taken further by Bogosel who managed to find a remarkable formula for $P(M)$ in terms of the dual edge pairs and distance between the endpoints of the edges of $B(X)$ \cite{bogosel2023volume}. 

\par Finally, we can use Blaschke's relation
$$
V(M)=\frac{1}{2}P(M)-\frac{\pi}{3}
$$
(Theorem 12.1.4 of \cite{MR3930585}) to approximate the volume of $M$. Indeed, once we have an  approximation for the perimeter of $M$, we also have one for the volume of $M$.  
 We will conclude our study by approximating the perimeter and volume of a few examples of Meissner polyhedra. 

\begin{ex} Suppose $M$ is a Meissner tetrahedron based on 
the vertices of a regular tetrahedron $X=\{x_1,x_2,x_3,x_4\}$.  Let $e$ denote the edge between vertices $x_1$ and $x_2$ and $e'$ the edge which joins $x_3$ and $x_4$; note that $e$ and $e'$ are dual edges of $B(X)$. It is well known the dihedral angle of a regular tetrahedron is $\cos^{-1}\left(1/3\right)$; this is verified in the appendix of \cite{HyndDensity} for instance. This observation implies  
$$
\psi=\angle\left(x_1-(x_3+x_4)/2,x_2-(x_3+x_4)/2\right)=\cos^{-1}\left(1/3\right)
$$
and 
$$
\varphi=\angle\left(x_3-(x_1+x_2)/2,x_4-(x_1+x_2)/2\right)=\cos^{-1}\left(1/3\right).
$$
By Lemma \ref{SpindleLemma},
\begin{align}
\sigma(\textup{Sp}(X,e))&=2\cos^{-1}\left(1/3\right) \left(-\frac{\sqrt{3}}{2}\sin^{-1}\left(\frac{1}{2}\right)+\frac{1}{2}\right)\\
&=\cos^{-1}\left(1/3\right) \left(-\frac{\pi}{2\sqrt{3}}+1\right).
\end{align}

\par Let us now compute the area of the sliver near edge $e$ in the face opposite $x_4$. As $x_1,x_2,x_4$ belong to an equilateral triangle, $\phi=\angle\left(x_1-x_4,x_2-x_4 \right)=\pi/3.$
Thus, 
\begin{align}
\sigma(\textup{Sl}(X,x_4,e))&=2\cos^{-1}\left(\frac{\sqrt{3}}{2}\frac{\sin\varphi}{\displaystyle\frac{\sqrt{3}}{2}}\right)-\frac{1}{2}\cos^{-1}\left(\frac{1}{3}\right)\\
&=2\cos^{-1}\left(\sin\varphi\right)-\frac{1}{2}\cos^{-1}\left(\frac{1}{3}\right)\\ 
&=\pi-2\sin^{-1}(\sin\varphi)-\frac{1}{2}\cos^{-1}\left(\frac{1}{3}\right)\\
&=\pi-\frac{5}{2}\cos^{-1}\left(\frac{1}{3}\right).
\end{align}
By symmetry, this is the same for any sliver on the boundary of $B(X)$.

\par In order view of \eqref{SurgeryFormula}, the perimeter of $M$ is 
\begin{align}
P( M)&=P(B(X))+3\cos^{-1}\left(\frac{1}{3}\right)\left(-\frac{\pi}{2\sqrt{3}}+1\right)-6\left(\pi-\frac{5}{2}\cos^{-1}\left(\frac{1}{3}\right)\right)\\
&=4\left(2\pi -\frac{9}{2}\cos^{-1}\left(\frac{1}{3}\right)\right)-6\pi +3\cos^{-1}\left(\frac{1}{3}\right)\left(6-\frac{\pi}{2\sqrt{3}}\right)\\
&=\pi\left(2-\frac{\sqrt{3}}{2}\cos^{-1}(1/3)\right).
\end{align}
Blaschke's relation also gives 
$$
V(M)=\frac{1}{2}P(M)-\frac{\pi}{3}=\pi\left(\frac{2}{3}-\frac{\sqrt{3}}{4}\cos^{-1}(1/3)\right).
$$
Lastly, we emphasize that these computations are valid for both types of Meissner tetrahedra. 
\end{ex}
 \begin{figure}[h]
\centering
   \includegraphics[width=.44\textwidth]{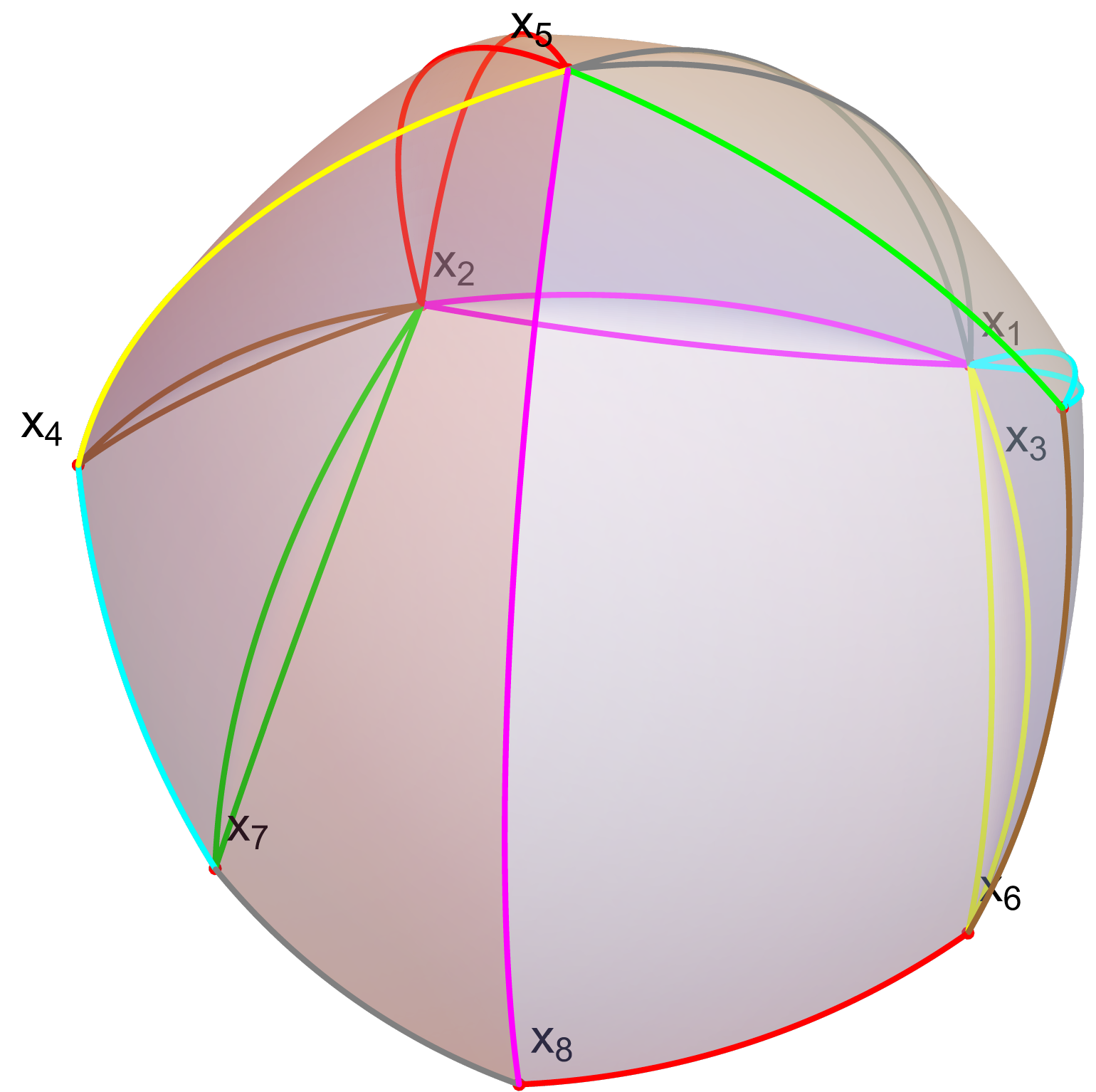} 
    \hspace{.3in}
      \includegraphics[width=.44\textwidth]{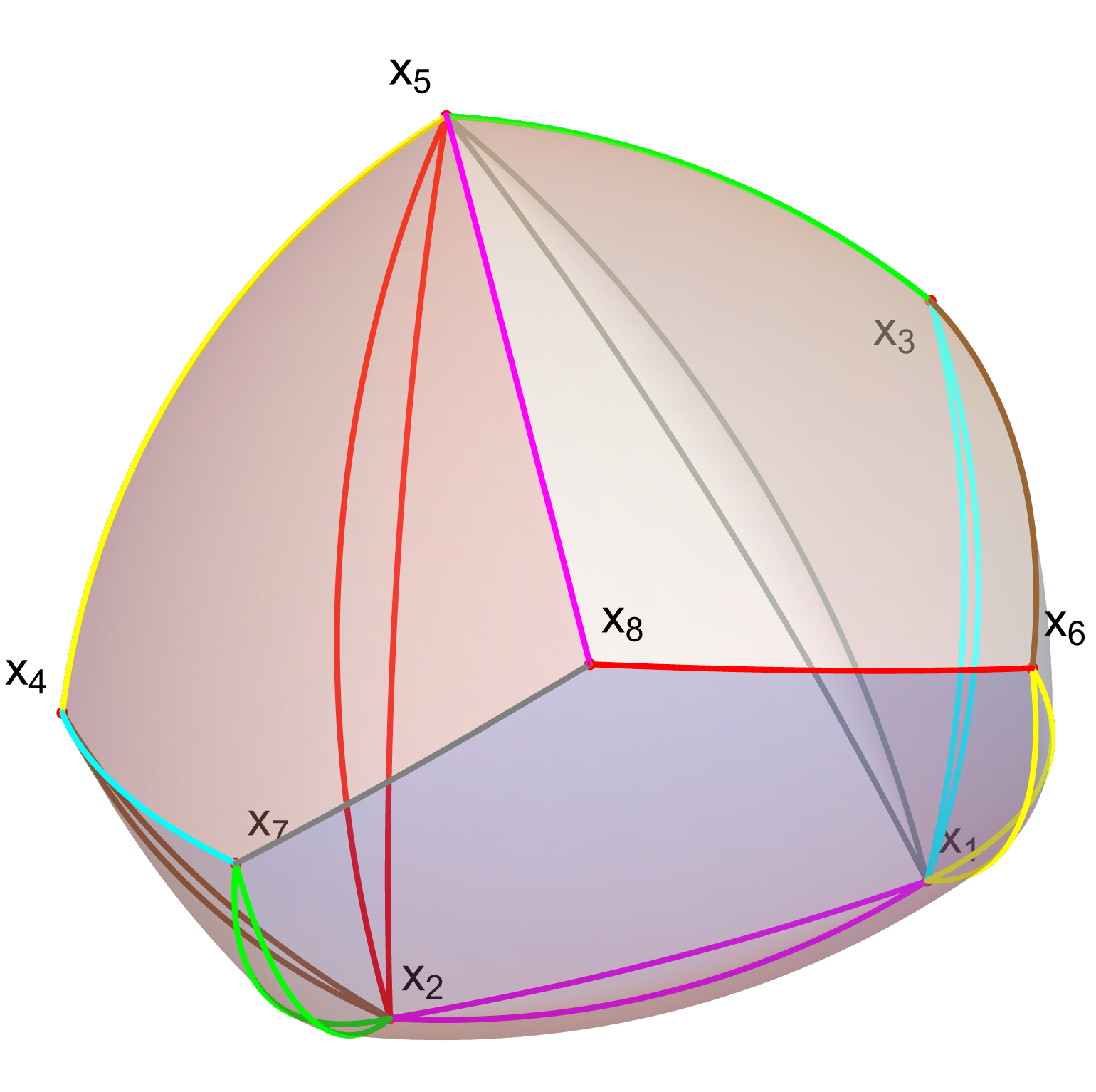} 
 \caption{A Meissner polyhedron obtained from smoothing edges of $B(X)$, where $X=\{x_1,\dots, x_8\}$ is given in  Example \ref{BullExample}.}\label{MessnerBull}
 \end{figure}
\begin{ex}
Consider the extremal set $X=\{x_1,\dots, x_8\}$ presented in Example \ref{BullExample} above.  This set has $14$ edges and $7$ dual edge pairs. If we denote the edge $(i,j)$ as the one which joins vertices $x_i$ and $x_j$, then the dual edge pairs may be enumerated as
$$
\begin{cases}
(1,2), (5,8)\\
(1,3), (4,7)\\
(1,5), (7,8)\\
(1,6), (4,5)\\
(2,4), (3,6)\\
(2,5), (6,8)\\
(2,7), (3,5).
\end{cases}
 $$
 The Meissner polyhedra $M$ obtained from smoothing the boundary of $B(X)$ near the edges $(1,2),(1,3), (1,5), (1,6), (2,4), (2,5), (2,7)$ is displayed in Figure \ref{MessnerBull}. The perimeter and volume given by \eqref{SurgeryFormula} and Blaschke's relation are approximately
 $$
 P(M)\approx 2.9968929812165475\; \text{ and }\; V(M)\approx 0.4512489394116761.
 $$
\end{ex}
 \begin{figure}[h]
\centering
   \includegraphics[width=.45\textwidth]{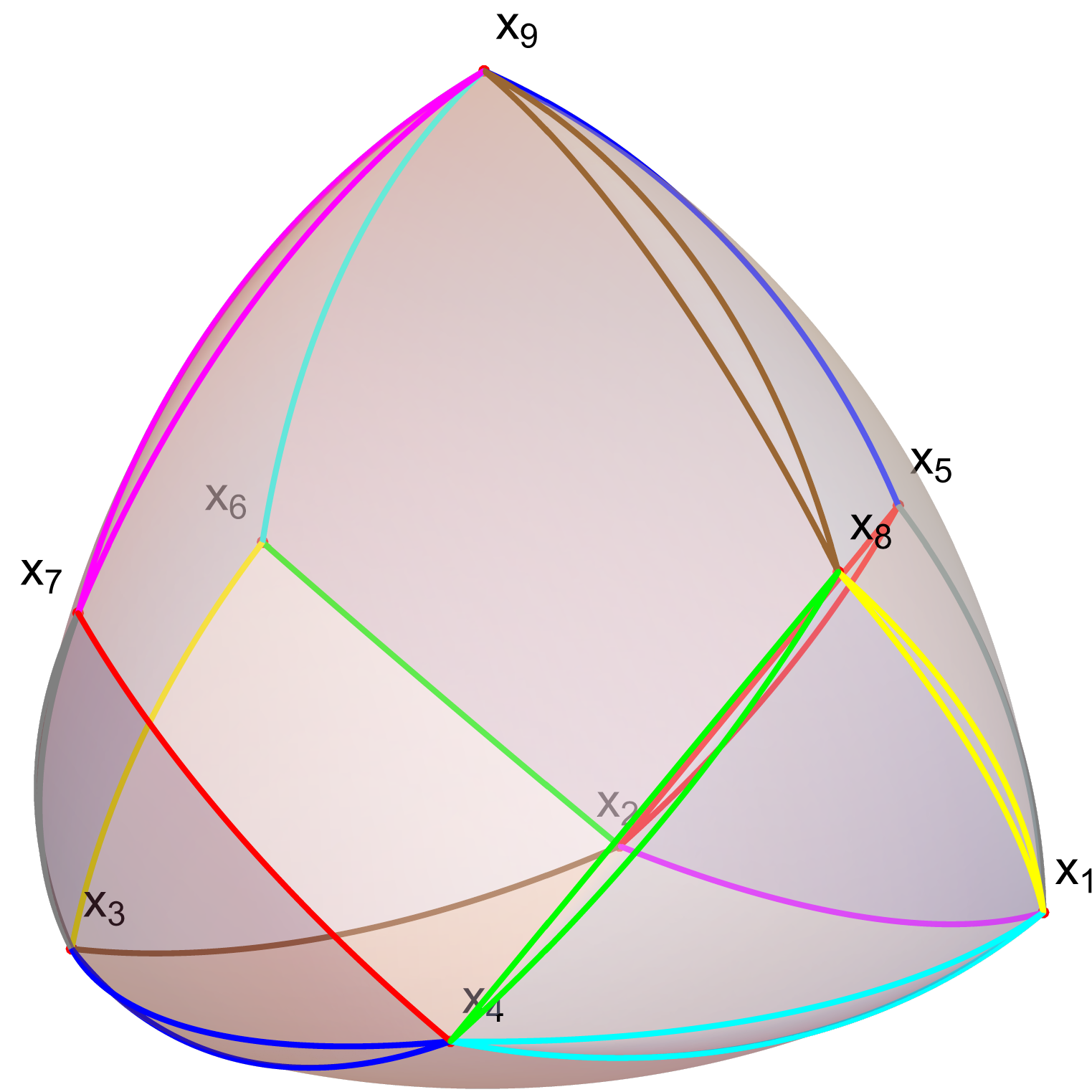} 
    \hspace{.3in}
      \includegraphics[width=.44\textwidth]{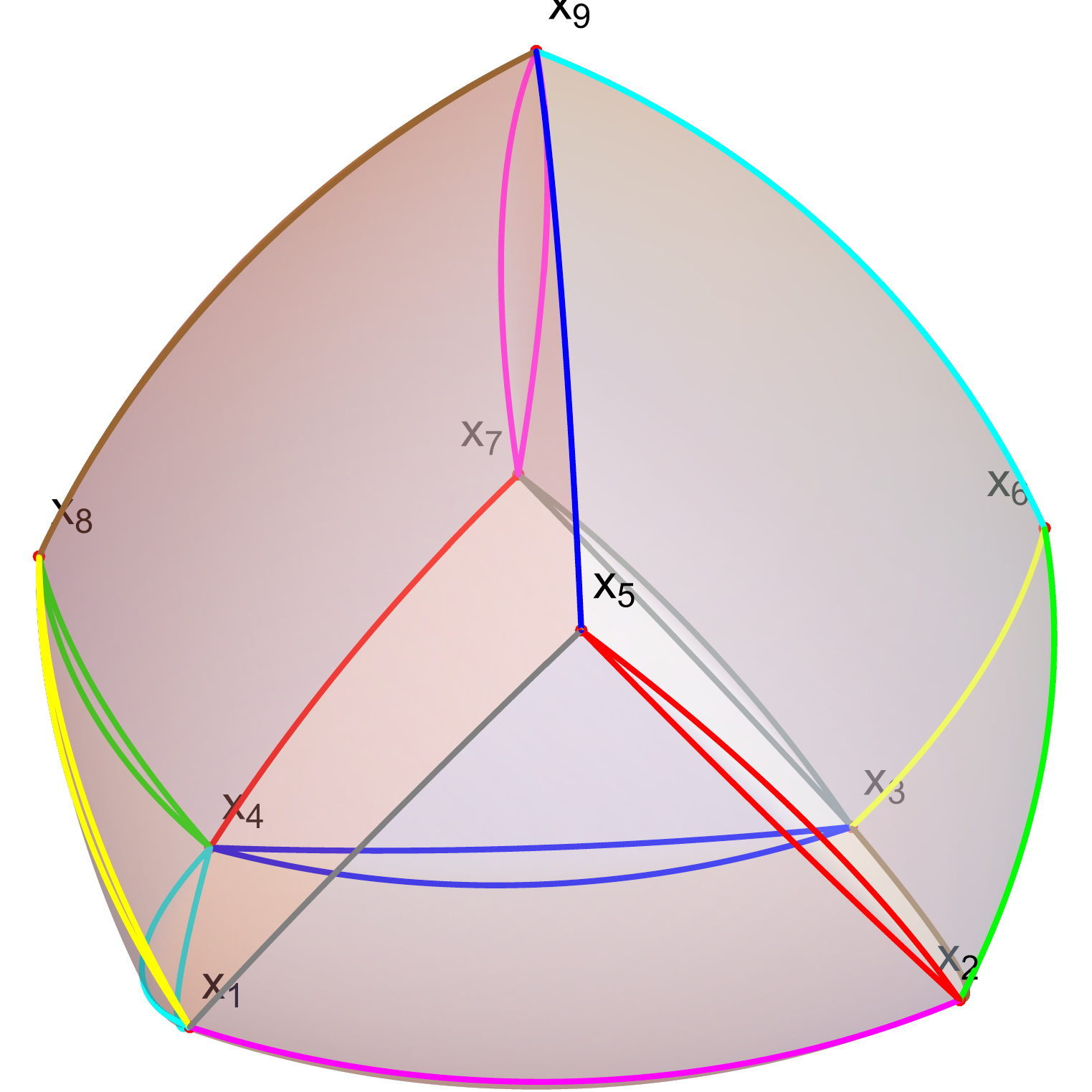} 
 \caption{A Meissner polyhedron obtained from smoothing edges of the diminished Reuleaux trapezohedron $D_4$.}\label{MessnerDimTrap}
 \end{figure}
\begin{ex}
Recall the diminished Reuleaux trapezohedron $D_4=B(\{x_1,\dots,x_9\})$ discussed in Section \ref{DiminishedTrapSect} and displayed in Figure \ref{DimTrapFig}.  The indices of the dual edge pairs are 
$$
\begin{cases}
(1,2), (7,9)\\
(1,4), (6,9)\\
(1,5),(3,7)\\
(1,8), (3,6)\\
(2,3),(8,9)\\
(2,5),(4,7)\\
(2,6), (4,8)\\
(3,4), (5,9).
\end{cases}
$$
We consider the Meissner polyhedron $M$ obtained by smoothing the edges $(7,9), (1,4), (3,7)$, $(1,8), (8,9), (2,5), (4,8), (3,4)$. This shape is displayed in Figure \ref{MessnerDimTrap}, and its perimeter and volume are approximately
$$
P(M)\approx 3.0207439602029265 \text{ and } V(M)\approx 0.4631744289048656.
$$
\end{ex}
\bibliography{PerVolBib}{}

\bibliographystyle{plain}

\typeout{get arXiv to do 4 passes: Label(s) may have changed. Rerun}

\end{document}